\DeclareFontFamily{U}{mathx}{}
\DeclareFontShape{U}{mathx}{m}{n}{<-> mathx10}{}
\DeclareSymbolFont{mathx}{U}{mathx}{m}{n}
\DeclareMathAccent{\widehat}{0}{mathx}{"70}
\DeclareMathAccent{\widecheck}{0}{mathx}{"71}
\theoremstyle:=definition,remark,plain\do{%
 \expandafter\g@addto@macro\csname th@\theoremstyle\endcsname{%
 \addtolength\thm@preskip\parskip
 }%
 }
\declaretheorem[name=Theorem,numberwithin=section]{thm}
\declaretheorem[name=Proposition,numberlike=thm]{prop}
\declaretheorem[name=Lemma,numberlike=thm]{lemma}
\declaretheorem[name=Corollary,numberlike=thm]{cor}
\declaretheorem[name=Remark,style=definition,qed=$\blacktriangle$,numberlike=thm]{rmk}
\newcounter{noteCounter}
\newcommand{\dib}[1]{\dfrac{\partial}{\partial{#1}}}
\newcommand{\dibb}[2]{\dfrac{\partial{#1}}{\partial{#2}}}
\newcommand{\dbydr}{\tfrac{\partial}{\partial r}} 
\newcommand{\nr}{\nabla_{\frac{\partial}{\partial r}}}
\newcommand{\tdiv}{\operatorname{div}}
\newcommand{\curl}{\operatorname{curl}}
\newcommand{\rank}{\operatorname{rank}}
\newcommand{\se}{\mathsf{e}}
\newcommand{\sv}{\mathsf{v}}
\newcommand{\re}{\mathrm{e}}
\newcommand{\rj}{\mathbf{j}}
\newcommand{\Lie}{\mathcal L}
\newcommand{\hstar}{\star} 
\newcommand{\phir}{\overline{\varphi}} 
\newcommand{\psir}{\overline{\psi}} 
\newcommand{\phii}{{\varphi}} 
\newcommand{\vR}{{\mathsf R}} 
\newcommand{\rJ}{{\mathrm J}} 
\renewcommand{\Re}{\operatorname{Re}} 
\renewcommand{\Im}{\operatorname{Im}} 
\newcommand{\gN}{\widecheck{g}} 
\newcommand{\gdn}{\widecheck{g}} 
\newcommand{\nablaN}{\widecheck{\nabla}} 
\newcommand{\ri}{{\mathrm i}}
\newcommand{\R}{\mathbb R}
\newcommand{\C}{\mathbb C}
\newcommand{\PR}{\mathbb P}
\newcommand{\G}{\mathrm{G}_2}
\newcommand{\Spin}[1]{\mathrm{Spin}(#1)}
\DeclareMathOperator\vol{vol}
\newcommand{\vu}{\boldsymbol{u}}
\newcommand{\vF}{\boldsymbol{F}}
\newcommand{\scrU}{\mathscr{U}}
\newcommand{\hk}{\mathbin{\! \hbox{\vrule height0.3pt width5pt depth 0.2pt \vrule height5pt width0.4pt depth 0.2pt}}}
\let\c@equation\c@thm
\numberwithin{equation}{section}
\begin{document}

\title{Cohomogeneity one solitons for the \\ isometric flow of $\G$-structures}

\author{Thomas A. Ivey \\ \emph{Department of Mathematics, College of Charleston} \\ \tt{iveyt@cofc.edu} \and Spiro Karigiannis \\ \emph{Department of Pure Mathematics, University of Waterloo} \\ \tt{karigiannis@uwaterloo.ca}}

\maketitle

\begin{abstract}
We consider the existence of cohomogeneity one solitons for the isometric flow of $\G$-structures on the following classes of torsion-free $\G$-manifolds: the Euclidean $\R^7$ with its standard $\G$-structure, metric cylinders over Calabi--Yau $3$-folds, metric cones over nearly K\"ahler $6$-manifolds, and the Bryant--Salamon $\G$-manifolds. In all cases we establish existence of global solutions to the isometric soliton equations, and determine the asymptotic behaviour of the torsion. In particular, existence of shrinking isometric solitons on $\R^7$ is proved, giving support to the likely existence of type I singularities for the isometric flow. In each case, the study of the soliton equation reduces to a particular nonlinear ODE with a regular singular point, for which we provide a careful analysis. Finally, to simplify the derivation of the relevant equations in each case, we first establish several useful Riemannian geometric formulas for a general class of cohomogeneity one metrics on total spaces of vector bundles which should have much wider application, as such metrics arise often as explicit examples of special holonomy metrics.
\end{abstract}

\tableofcontents

\section{Introduction} \label{introsec}

Let $(M^7, \phii)$ be a $7$-manifold equipped with a $\G$-structure $\phii$, inducing a Riemannian metric $g$, an orientation, and a Hodge dual $4$-form $\psi = \hstar_{\phii} \phii$. The torsion tensor $T$ of $\phii$ is a $2$-tensor which measures the failure of $\phii$ to be parallel, and hence the failure of the induced metric $g$ to have holonomy contained in $\G$. As there are many $\G$-structures on $M$ which induce the same metric $g$, it is natural to consider a ``best'' representative of the \emph{isometry class} $[\phii]_g$ of $\G$-structures on $M$ inducing the metric $g$. This problem was first considered by Grigorian~\cite{Grigorian-Oct}. He showed that for $M$ compact, if we \emph{restrict to the isometry class $[\phii]_g$}, then the negative gradient flow of the functional $\int_M |T|^2 \vol_g$ yields the geometric flow
\begin{equation} \label{eq:IF}
\dibb{\varphi}t = (\tdiv T) \hk \psi,
\end{equation}
called the \emph{isometric flow} of $\G$-structures. In particular, the Euler--Lagrange equation for critical points of this functional, restricted to the isometry class $[\phii]_g$, is the equation $\tdiv T = 0$, corresponding to \emph{divergence-free torsion}. It makes sense to consider the isometric flow~\eqref{eq:IF} even in the noncompact case, as a natural geometric flow of $\G$-structures all inducing the same Riemannian metric $g$.

Analytic aspects of the isometric flow have been studied by various authors. Bagaglini~\cite{Bag} established short-time existence and uniqueness. Both Grigorian~\cite{G-IF} and Dwivedi--Gianniotis--Karigiannis~\cite{DGK} derived Shi-type estimates giving bounds on higher derivatives of the torsion in terms of bounds on the torsion, proved almost monotonicity formulas and $\epsilon$-regularity theorems, and established stability (long time existence and convergence to a $\G$-structure with divergence-free torsion given initial conditions with sufficiently small torsion). Grigorian~\cite{G-IF} also obtained more refined stability results when the isometry class $[\phii]_g$ of the initial $\G$-structure admits a torsion-free representative. Dwivedi--Gianniotis--Karigiannis~\cite{DGK} also considered the structure of the singular set and established the relation between type-I singularities and shrinking solitons. (See~\cite{Grigorian-IF-survey} for a survey of results on the isometric flow.)

Recent work of Fadel--Loubeau--Moreno--S\'a Earp~\cite{FLMS} generalizes the isometric flow to any $H$-structure on a Riemannian $n$-manifold, where $H$ is a Lie subgroup of $\mathrm{SO}(n)$. Dwivedi--Loubeau--S\'a Earp~\cite{DLS} formulate a theory of isometric solitons and self-similarity for general $H$-structures, and~\cite{FLMS} extends this formulation to a larger class of flows, while Fowdar--S\'a Earp~\cite{FS} construct isometric solitons for quaternionic-K\"ahler structures on $\R^8$. Finally, in another recent preprint, Dwivedi--Gianniotis--Karigiannis~\cite{Flows2} show that a natural coupling of the isometric flow with the Ricci flow of metrics has good short-time existence properties.

The present paper is concerned with the analysis of particular examples of solitons for the isometric flow when the initial $\G$-structure $\phii_0$ is torsion-free, and such that the soliton solutions to the flow are of \emph{cohomogeneity one}. More precisely, we consider the following torsion-free $\G$-structures:
\begin{enumerate}[(i)] \setlength\itemsep{-1mm}
\item The standard $\G$-structure on the Euclidean $\R^7$.
\item A metric cylinder $(L \times N, dr^2 + g_N)$ over a Calabi--Yau $3$-fold $N$, where $L = \R$ or $S^1$.
\item A metric cone $(\R^{+} \times N, dr^2 + r^2 g_N)$ over a nearly K\"ahler $6$-manifold $N$.
\item The Bryant--Salamon $\G$-manifolds $\Lambda^2_- (M^4)$ and $\slashed{S} (S^3)$, where $M^4 = S^4$ or $\C\PR^2$.
\end{enumerate}
In each of these cases, we consider solitons for the isometric flow whose data depends only on the coordinate $r$. (In (i) the coordinate $r$ is the radial distance to the origin, while in (iv) the coordinate $r$ is the radial distance to the origin in the fibres.)

The isometric flow soliton equation is a system of PDE for a triple $(\varphi, Y, c)$ where $\varphi$ is a $\G$-structure in a given isometric class, $Y$ is a vector field, and $c \in \R$. The system of PDE is
\begin{equation} \label{eq:PDE-intro}
\Lie_Y g = 2c \, g, \qquad \tdiv T = - \tfrac{1}{2} \curl_{\varphi} Y +Y \hk T,
\end{equation}
where $T$ is the torsion and $\curl_{\varphi}$ is the curl operator of $\varphi$. The soliton is called shrinking, steady, or expanding if $c$ is positive, zero, or negative, respectively. (But see Remark~\ref{rmk:soliton-notation} for clarification.) 

In all of the above cases, we prove existence of globally defined solutions to the system~\eqref{eq:PDE-intro}, and we investigate the asymptotic behaviour of the torsion. Specifically, we prove:
\begin{enumerate}[(i)] \setlength\itemsep{-1mm}
\item On Euclidean $\R^7$ there exist global radially symmetric solutions for any $c \in \R$, all with $Y = c r \frac{\partial}{\partial r}$. For $c \leq 0$, we have $|T| \to 0$ at infinity, while for $c > 0$ we have $|T| \to \infty$ at infinity. (See Theorem~\ref{thm:R7final} and Proposition~\ref{prop:asymptotic-torsion}.)
\item On a metric cylinder $(L \times N, dr^2 + g_N)$ over a Calabi--Yau $3$-fold $N$, the only cohomogeneity one solutions have $c=0$, and $Y = b \frac{\partial}{\partial r}$ for some constant $b$. When $L = S^1$, the $\G$-structure $\varphi$ is torsion-free. When $L = \R$, the $\G$-structure $\varphi$ is \emph{not} torsion-free. The pointwise norm $|T|$ of the torsion goes to zero at one end of $\R$ and is unbounded at the other end. (See Theorem~\ref{thm:CYfinal}.)
\item On a metric cone $(\R^{+} \times N, dr^2 + r^2 g_N)$ over a nearly K\"ahler $6$-manifold $N$, the cohomogeneity one isometric solitons are exactly the same as case (i). Indeed, (i) is a special case of (iii). (See Proposition~\ref{prop:NK-solitons} and Remark~\ref{rmk:NK}.)
\item On the Bryant--Salamon $\G$-manifolds $\Lambda^2_- (M^4)$ and $\slashed{S} (S^3)$, the only cohomogeneity one solutions have $c=0$ and $Y=0$, and satisfy $|T| \to 0$ at infinity. (See Proposition~\ref{prop:BS-solitons} and Theorem~\ref{thm:BS-solitons-final}.)
\end{enumerate}
In all cases, the study of~\eqref{eq:PDE-intro} reduces to a particular nonlinear ODE with a regular singular point. These all belong to a general family, for which we present a careful analysis in Section~\ref{ODEsec}. Moreover, to simplify the derivation of the relevant equations in each case, in Section~\ref{Laplacian-sec} we establish several useful Riemannian geometric formulas for a general class of cohomogeneity one metrics on total spaces of vector bundles. These formulas should have much wider application, as such metrics arise often as explicit examples of special holonomy metrics.

\begin{rmk} \label{rmk:singularities}
When $(M, \phir)$ is $\R^7$ equipped with the standard torsion-free $\G$-structure inducing the Euclidean metric, then it was shown in~\cite[Remark 5.21]{DGK} that \emph{the non-existence of shrinking isometric solitons on $(\R^7, \phir)$ would imply that type I singularities of the isometric flow do not occur}. This is because appropriately rescaling about a type I singularity of the isometric flow yields a shrinking isometric soliton on the Euclidean $\R^7$ with the standard $\G$-structure.

As we prove in Theorem~\ref{thm:R7final} that such shrinking isometric solitons do indeed exist, the occurrence of type I singularities cannot be excluded. Indeed, in~\cite[Example 2.17]{FLMS}, the first known example of a finite-time singularity for the isometric flow is presented. It is not clear if this singularity is of type I, but if it is, then it would be interesting to see which shrinking isometric soliton it induces on $\R^7$.
\end{rmk}

\textbf{Notation.} Let $(M, g)$ be a Riemannian manifold with arbitrary local frame $\{ w_a : 1 \leq a \leq \dim M \}$ for $TM$. For any tensor $A$ on $M$, the notation $\nabla_a \nabla_b A$ means $(\nabla (\nabla A))(w_a, w_b, \cdot)$, and thus
\begin{align*}
\nabla_a \nabla_b A & = (\nabla (\nabla A))(w_a, w_b, \cdot) = \big( \nabla_{a} (\nabla A) \big) (w_b, \cdot) \\
& = w_a \big( (\nabla A)(w_b, \cdot) \big) - (\nabla A)(\nabla_a w_b, \cdot) - (\nabla A)(w_b, \nabla_a (\cdot) ) \\
& = w_a \big( (\nabla_b A)(\cdot) \big) - (\nabla_{\nabla_a w_b}) A)(\cdot) - (\nabla_b A) (\nabla_a (\cdot) ) \\
& = \big( \nabla_a (\nabla_b A) \big) (\cdot) - (\nabla_{\nabla_a w_b} A) (\cdot). 
\end{align*}
In particular, the \emph{analyst's Laplacian} $\Delta A$ is given by
\begin{equation} \label{eq:rough-Lap}
\Delta A = g^{ab} \nabla_a \nabla_b A = g^{ab} \big( \nabla_a (\nabla_b A) - \Gamma^c_{ab} \nabla_c A).
\end{equation}
This differs by a sign from the \emph{rough Laplacian} $\nabla^* \nabla A = - g^{ab} \nabla_a \nabla_b A$. Note that we work with torsion-free $\G$-structures, which are Ricci-flat, so the rough Laplacian equals the Hodge Laplacian on $1$-forms and functions.

\textbf{Acknowledgements} The research of the second author is supported by a Discovery Grant from the Natural Sciences and Engineering Research Council of Canada.  The second author thanks Shubham Dwivedi and Panagiotis Gianniotis for useful discussions.
The first author thanks Ash and Donna Ivey for their kind hospitality while this project was initiated.

\section{Preliminaries} \label{prelimsec}

In this section we first review some basic facts about the isometric flow of $\G$-structures and the associated soliton equations. We mostly follow the notation of~\cite{DGK}. Then we consider a particular class of Riemannian metrics and derive some general formulas for the Laplacian and the norm squared of the torsion of a $\G$-structure in this context.

\subsection{The isometric flow and isometric solitons} \label{isometricsec}

The isometric flow is 
\begin{equation} \label{isoflowphi}
\dibb{\varphi}t = \tdiv T \hk \psi
\end{equation}
where $T$ is the torsion tensor of $\varphi$ and $\psi = \hstar_{\varphi} \varphi$. Let us denote by $\phir = \varphi(0)$ the initial $\G$-structure, which we use as a reference $\G$-structure throughout, and let $[\phir]_g$ be the isometry class of $\phir$. That is, $[\phir]_g$ is the set of all $\G$-structures on $M$ inducing the same metric $g$ as $\phir$. All norms, inner products, musical isomorphisms, and Hodge star operators $\hstar$ are with respect to this fixed metric. Unless stated otherwise, we compute in a local \emph{orthonormal} frame, so all indices are subscripts and any repeated indices are summed over all values from $1$ to $7$. The symbol $\Delta$ denotes the \emph{analyst's Laplacian} $\Delta = \nabla_k \nabla_k$.

The space $[\phir]_g$ of $\G$-structures isometric to $\phir$ has an explicit parametrization, due to Bryant~\cite{Bryant}. Any element $\phii$ of $[\phir]_g$ corresponds uniquely (up to an overall sign) to a pair $(f, X)$ of a smooth function $f$ on $M$ and a smooth vector field $X$ on $M$ such that $f^2 + |X|^2 = 1$. As given in~\cite[Equation (2.16)]{DGK}, the element $\varphi(f, X) \in [\phir]_g$ is
\begin{equation} \label{isog2form}
\phii = (f^2-|X|^2) \phir -2 f X \hk \psir +2 X^{\flat} \wedge (X \hk \phir)
\end{equation}
where $\psir =\hstar \phir$.

The isometric flow~\eqref{isoflowphi} can thus be expressed as a flow of the pair $(f, X)$. This is done in~\cite[Proposition 2.8]{DGK}, where it is shown that~\eqref{isoflowphi} is equivalent to the coupled system
\begin{align}
\dot f & = \tfrac{1}{2} \langle X, \tdiv T \rangle, \nonumber \\
\dot X & = -\tfrac{1}{2} f \tdiv T - \tfrac{1}{2} X \times_0 \tdiv T, \label{isoflowX}
\end{align}
where $\times_0$ denotes the cross product relative to initial $\G$-structure $\phir = \phii(0)$, and the dot indicates time derivative under the isometric flow. Note that since $f^2 + |X|^2 = 1$, the evolution equation for $f$ follows from the evolution equation for $X$. Nevertheless the equation for $f$ is frequently useful.

The torsion $T$ of $\phii = \phii(f, X)$ given by~\eqref{isog2form} can be expressed in terms of the torsion $\overline{T}$ of the initial $\G$-structure $\overline{\phii}$. This is given in~\cite[Lemma 2.9]{DGK}, where it is established that in terms of a local orthonormal frame we have
\begin{align} \nonumber
T_{pq} & = (1 - 2|X|^2)\overline{T}_{pq} + 2 \overline{T}_{pm} X_m X_q + 2 f \overline{T}_{pm} X_\ell \phir_{m \ell q} \\ \label{torsionfX}
& \qquad + 2 X_\ell \nabla_p X_m \phir_{\ell m q} + 2 \nabla_p f X_q - 2 f \nabla_p X_q.
\end{align}
Moreover, the divergence $\tdiv T$ of $\phii$, defined to be $(\tdiv T)_k = \nabla_i T_{ik}$, is shown in~\cite[Corollary 2.10]{DGK} to be
\begin{align} \nonumber
(\tdiv T)_{q} & = (1-2|X|^2) (\tdiv \overline{T})_q - 4 X_m \nabla_p X_m \overline{T}_{pq} + 2 (\tdiv \overline{T})_m X_m X_q + 2 \overline{T}_{pm} \nabla_p X_m X_q \\ \nonumber
& \qquad + 2 \overline{T}_{pm} X_m \nabla_p X_q + 2 \nabla_p f \overline{T}_{p\ell} X_m \phir_{\ell mq} + 2 f (\tdiv \overline{T})_\ell X_m \phir_{\ell mq} + 2 f \overline{T}_{p\ell} \nabla_p X_m \phir_{\ell mq} \\ \label{divTfX}
& \qquad + 2 X_\ell \nabla_p \nabla_p X_m \phir_{\ell mq} - 2 \nabla_p X_\ell X_m \overline{T}_{ps} \overline{\psi}_{s\ell mq} + 2 \nabla_p \nabla_p f X_q - 2 f \nabla_p \nabla_p X_q.
\end{align}

In the present paper, we only consider the special case when the initial $\G$-structure $\phir$ is torsion-free. In particular, the metric is Ricci-flat, so $\nabla_p \nabla_p X_q = (\Delta X)_q$ in this case. Setting $\overline{T} = 0$ in both~\eqref{torsionfX} and~\eqref{divTfX} results in the much simpler expressions
\begin{equation} \label{simpleT}
T_{pq} = 2X_\ell \nabla_p X_m \phir_{\ell m q} +2\nabla_p f X_q -2f \nabla_p X_q
\end{equation}
and
\begin{equation} \label{simpledivT}
(\tdiv T)_q = 2 X_\ell (\Delta X)_m \phir_{\ell m q} +2 (\Delta f) X_q -2 f (\Delta X)_q
\end{equation}
for the torsion $T$, and its divergence $\tdiv T$, of the isometric $\G$-structure $\phii = \phii(f, X)$ given by~\eqref{isog2form}.

The formulas for $T$ and $\tdiv T$ simplify further for many of the $\G$-structures we consider below.
(Note that $Y \hk T$ denotes the contraction of $Y$ with $T$ on the first index, i.e., $(Y \hk T)_k = Y_p T_{pk}$.)

\begin{cor} \label{cor:T-phidropout}
(a) Suppose $Y$ is a vector field such that $\nabla_Y X$ is parallel to $X$. Then 
$$ (Y \hk T)_q = Y_p T_{pq} = 2(Yf) X_q - 2 f (\nabla_Y X)_q. $$
(b) Suppose $\Delta X$ is parallel to $X$. Then
$$ (\tdiv T)_q = 2 (\Delta f) X_q -2 f (\Delta X)_q. $$
\end{cor}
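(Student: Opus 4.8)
The plan is to read off both statements directly from the simplified formulas \eqref{simpleT} and \eqref{simpledivT}, using the algebraic fact that contracting two indices of the $\G$-structure $3$-form $\phir$ against two parallel vectors gives zero because $\phir$ is alternating.

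For part (a), I would start from \eqref{simpleT}, which says
\begin{equation*}
T_{pq} = 2 X_\ell \nabla_p X_m \phir_{\ell m q} + 2 \nabla_p f \, X_q - 2 f \nabla_p X_q.
\end{equation*}
Contracting on the first index with $Y$ (so multiplying by $Y_p$ and summing) gives
\begin{equation*}
(Y \hk T)_q = 2 X_\ell (\nabla_Y X)_m \phir_{\ell m q} + 2 (Yf) X_q - 2 f (\nabla_Y X)_q.
\end{equation*}
The hypothesis that $\nabla_Y X$ is parallel to $X$ means $(\nabla_Y X)_m = \lambda X_m$ pointwise for some function $\lambda$, so the first term becomes $2\lambda X_\ell X_m \phir_{\ell m q}$, which vanishes since $\phir$ is antisymmetric in $\ell$ and $m$ while $X_\ell X_m$ is symmetric. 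That leaves exactly $(Y \hk T)_q = 2(Yf)X_q - 2f(\nabla_Y X)_q$, as claimed.

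For part (b), the argument is identical in spirit but applied to \eqref{simpledivT}:
\begin{equation*}
(\tdiv T)_q = 2 X_\ell (\Delta X)_m \phir_{\ell m q} + 2 (\Delta f) X_q - 2 f (\Delta X)_q.
\end{equation*}
If $\Delta X$ is parallel to $X$, say $(\Delta X)_m = \mu X_m$, then the first term is $2\mu X_\ell X_m \phir_{\ell m q} = 0$ by the same symmetry/antisymmetry cancellation, leaving $(\tdiv T)_q = 2(\Delta f)X_q - 2f(\Delta X)_q$.

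There is really no obstacle here: both parts are immediate consequences of the formulas already derived (which used torsion-freeness of $\phir$ and Ricci-flatness to write $\nabla_p\nabla_p X = \Delta X$), combined with the trivial observation that a symmetric $2$-tensor contracted into an antisymmetric pair of slots gives zero. The only point worth a word of care is that ``parallel to $X$'' should be read pointwise (the proportionality factor may be a function, and may be zero where $X=0$), but since the cancellation $X_\ell X_m \phir_{\ell m q}=0$ holds pointwise regardless, this causes no trouble. I would present the proof in two or three lines, simply substituting the proportionality hypothesis into \eqref{simpleT} and \eqref{simpledivT} respectively and noting the vanishing of the $\phir$-term.
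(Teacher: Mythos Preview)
Your proof is correct and follows essentially the same approach as the paper: contract~\eqref{simpleT} (respectively~\eqref{simpledivT}) with $Y$, write the parallelism hypothesis as $(\nabla_Y X)_m = \lambda X_m$ (respectively $(\Delta X)_m = \mu X_m$), and observe that the $\phir$-term vanishes by skew-symmetry.
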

\begin{proof} Let $\nabla_Y X = \rho X$ for some smooth function $\rho$. Then using~\eqref{simpleT} we have
\begin{align*}
Y_p T_{pq} & = 2X_{\ell} (\nabla_Y X)_m \phir_{\ell m q} + 2 (\nabla_Y f) X_q - 2 f (\nabla_Y X)_q \\
& = 2 \rho X_{\ell} X_m \phir_{\ell m q} + 2 (Yf) X_q - 2 f (\nabla_Y X)_q,
\end{align*}
and the first term vanishes by skew-symmetry of $\phir_{\ell m q}$. The proof for (b) using~\eqref{simpledivT} is similar.
\end{proof}

\begin{prop} \label{prop:T-formula}
Let $\phir$ be torsion-free and let $\phii = \phii(f, X)$ be given by~\eqref{isog2form}. Then the torsion $T$ of $\phii$ satisfies
\begin{equation} \label{normT}
\tfrac{1}{4} |T|^{2} = |\nabla X|^2 + |\nabla f|^2.
\end{equation}
\end{prop}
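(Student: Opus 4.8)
The plan is to compute $|T|^2 = T_{pq}T_{pq}$ directly from the simplified torsion formula~\eqref{simpleT}, which is available since $\phir$ is torsion-free. Write $\tfrac12 T_{pq} = A_{pq} + B_{pq} - C_{pq}$, where $A_{pq} = X_\ell \nabla_p X_m \phir_{\ell m q}$, $B_{pq} = (\nabla_p f)\,X_q$, and $C_{pq} = f\,\nabla_p X_q$, so that $\tfrac14|T|^2 = |A|^2 + |B|^2 + |C|^2 + 2\langle A,B\rangle - 2\langle A,C\rangle - 2\langle B,C\rangle$, and evaluate the six terms in turn. The two algebraic inputs are: (a) the $\G$-structure contraction identity $\phir_{\ell m q}\phir_{\ell' m' q} = \delta_{\ell\ell'}\delta_{mm'} - \delta_{\ell m'}\delta_{m\ell'} - \psir_{\ell m\ell' m'}$; and (b) the identity obtained by differentiating the constraint $f^2 + |X|^2 = 1$, namely $X_\ell \nabla_p X_\ell = \tfrac12 \nabla_p |X|^2 = -f\,\nabla_p f$.

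For the diagonal terms, $|B|^2 = |\nabla f|^2 |X|^2$ and $|C|^2 = f^2|\nabla X|^2$ are immediate. For $|A|^2$ I would apply (a): the first Kronecker term contributes $|X|^2|\nabla X|^2$, the second contributes $-(X_\ell \nabla_p X_\ell)(X_m \nabla_p X_m) = -f^2|\nabla f|^2$ using (b), and the $\psir$ term vanishes because $\psir_{\ell m\ell' m'}$ is skew under $\ell\leftrightarrow\ell'$ while $X_\ell X_{\ell'}$ is symmetric, so $|A|^2 = |X|^2|\nabla X|^2 - f^2|\nabla f|^2$. For the off-diagonal terms I expect $\langle A,B\rangle$ and $\langle A,C\rangle$ to vanish purely by skew-symmetry of $\phir$: in $\langle A,B\rangle$ one contracts $\phir_{\ell m q}$ against $X_\ell X_q$, symmetric in $(\ell,q)$; in $\langle A,C\rangle$ one contracts $\phir_{\ell m q}$ against $\nabla_p X_m \nabla_p X_q$, which is symmetric in $(m,q)$ after summing over $p$. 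The remaining cross term is $\langle B,C\rangle = f\,\nabla_p f\,(X_q \nabla_p X_q) = -f^2|\nabla f|^2$ by (b).

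Assembling, $\tfrac14|T|^2 = \big(|X|^2|\nabla X|^2 - f^2|\nabla f|^2\big) + |X|^2|\nabla f|^2 + f^2|\nabla X|^2 + 2f^2|\nabla f|^2$; the coefficient of $|\nabla X|^2$ is $|X|^2 + f^2$ and the coefficient of $|\nabla f|^2$ is $-f^2 + |X|^2 + 2f^2 = |X|^2 + f^2$, so applying $f^2 + |X|^2 = 1$ once more gives $\tfrac14|T|^2 = |\nabla X|^2 + |\nabla f|^2$. There is no real obstacle here; the argument is a finite bookkeeping exercise in orthonormal-frame index manipulation. The only points needing care are fixing the correct signs in the contraction identity (a) — although the $\psir$-term drops out regardless of its sign — and tracking the sign in (b), since it is precisely the interplay among the $-f^2|\nabla f|^2$ term in $|A|^2$, the $+2f^2|\nabla f|^2$ term from $-2\langle B,C\rangle$, and the $|X|^2|\nabla f|^2$ term from $|B|^2$ that assembles into the clean coefficient $f^2 + |X|^2$.
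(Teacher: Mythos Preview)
Your argument is correct and is essentially the same computation as the paper's proof: both expand $T_{pq}T_{pq}$ from~\eqref{simpleT}, use the contraction identity $\phir_{\ell m q}\phir_{knq} = \delta_{\ell k}\delta_{mn} - \delta_{\ell n}\delta_{mk} - \psir_{\ell m k n}$, note the same two cross terms vanish by skew-symmetry of $\phir$, and close with the constraint $f^2+|X|^2=1$. Your $A,B,C$ bookkeeping is slightly more systematic than the paper's direct expansion, and you state explicitly why the $\psir$ contribution vanishes, but the substance is identical.
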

\begin{proof}
We first observe by differentiating $f^2 +|X|^2=1$ that
\begin{equation} \label{fdf}
f \nabla_{p} f = -X_{q}\nabla_{p} X_{q}.
\end{equation}
Taking the norm squared of both sides of~\eqref{fdf} gives
\begin{equation} \label{fdf-squared}
f^2 |\nabla f|^2 = X_{k}\nabla_{p} X_{k} X_{q}\nabla_{p} X_{q}.
\end{equation}
Using~\eqref{simpleT}, we compute
\begin{align*}
\tfrac{1}{4} |T|^{2}= \tfrac{1}{4} T_{pq}T_{pq} &= X_\ell \nabla_p X_m \phir_{\ell m q} X_k \nabla_p X_n \phir_{k n q} + |\nabla f|^2 |X|^2 +f^2 |\nabla X|^2 \\
&\qquad+ 2X_\ell \nabla_p X_m \phir_{\ell m q} \nabla_p f X_q -2 f X_\ell \nabla_p X_m \phir_{\ell m q}\nabla_p X_q -2 f \nabla_p f X_q \nabla_p X_q.
\end{align*}
The last two terms involving $\phir$ are zero because the 3-form components are summed against terms symmetric in $\ell,q$ and $m,q$ respectively. Using the identity~\cite[Lemma A.12]{K-flows} given by
$$ \phir_{\ell m q} \phir_{k n q} = \delta_{\ell k} \delta_{mn} - \delta_{\ell n} \delta_{mk} - \psir_{\ell m k n} $$
to expand the first term, and using~\eqref{fdf} to eliminate derivatives of $f$ from the last term, we obtain
\begin{align*}
\tfrac{1}{4} |T|^{2} & = X_k X_\ell \nabla_p X_m \nabla_p X_n (\delta_{\ell k} \delta_{mn} - \delta_{\ell n} \delta_{mk} - \psir_{\ell m k n}) \\
&\qquad {} + |\nabla f|^2 |X|^2 +f^2 |\nabla X|^2 + 2 X_k \nabla_p X_k X_q \nabla_p X_q \\
& = |X|^2 |\nabla X|^2 - X_{k} X_{\ell} \nabla_{p}X_{k}\nabla_{p}X_{\ell}+|\nabla f|^2 |X|^2 + f^2 |\nabla X|^2 + 2 X_k \nabla_p X_k X_q \nabla_p X_q.
\end{align*}
Using the constraint $f^2 + |X|^2 = 1$ twice, and the identity~\eqref{fdf-squared}, the above becomes
\begin{align*}
\tfrac{1}{4} |T|^{2} & = |\nabla X|^2 (|X|^2 + f^2) + |X|^2|\nabla f|^2 + X_k \nabla_p X_k X_\ell \nabla_p X_\ell \\
& = |\nabla X|^2 + (1-f^2) |\nabla f|^2 + X_k \nabla_p X_k X_\ell \nabla_p X_\ell \\
& = |\nabla X|^2 + |\nabla f|^2
\end{align*}
as claimed.
\end{proof}

A \emph{soliton} for the isometric flow~\eqref{isoflowphi} is a self-similar solution (a solution which evolves by scalings and diffeomorphisms). It is shown in~\cite[Section 2.5]{DGK} that the data of an isometric flow soliton is encoded by a triple $(\varphi, Y, c)$ where $Y$ is a vector field and $c$ a constant such that
\begin{equation} \label{solitoncond}
\begin{aligned}
\Lie_Y g & = 2c \, g, \\
\tdiv T & = - \tfrac{1}{2} \curl_{\varphi} Y +Y \hk T,
\end{aligned}
\end{equation}
where $g$ is the metric and $T$ the torsion tensor associated to $\varphi$. Here $\curl_{\phii}$ denotes the \emph{curl} operator on vector fields corresponding to the $\G$-structure $\phii$, which can be expressed invariantly as
\begin{equation} \label{curldefn1}
\curl_{\varphi} Y = \left[ \hstar (d Y^{\flat}) \wedge \psi\right]^{\sharp}.
\end{equation}
Alternatively, in a local orthonormal frame we have
\begin{equation} \label{curldefn}
(\curl_{\varphi} Y)_k = \nabla_i Y_j \varphi_{ijk}.
\end{equation}

\begin{rmk} \label{rmk:soliton-notation}
In~\eqref{solitoncond} we have changed some notation from~\cite[Section 2.5]{DGK}. Namely, we have changed their $X$ to $Y$ because we use $X$ in the description $(f,X)$ of a member of the isometry class $[\phir]_g$, and we have also replaced their $c$ by $2c$ to avoid many factors of $\frac{1}{2}$ in our subsequent computations.
\end{rmk}

The isometric soliton $(\varphi, Y, c)$ is called \emph{shrinking}, \emph{steady}, or \emph{expanding} if $c$ is positive, zero, or negative, respectively. This nomenclature is chosen in analogy with solitons for the Ricci flow. Note that despite this, the associated self-similar solution $\phii(t)$ of the isometric flow induces the same Riemannian metric $g$ for all time, so the metric is \emph{not} changing. The action of the scaling of $\phii$ and the action by diffeomorphisms combine in such a way to ensure that the metric $g$ of $\phii(t)$ is independent of $t$, although the $\G$-structure $\phii(t)$ definitely does evolve in time. The precise correspondence between the triple $(\varphi, Y, c)$ and the self-similar solution of the isometric flow is explained in~\cite[Lemma 2.17]{DGK}, but we do not have need for it here.

Note also that since the metric $g$ is fixed, we do not have the freedom to scale $\varphi$. It then follows from~\eqref{solitoncond} that we cannot scale $Y$ or $c$ either. This is in contrast to solitons for the Ricci flow, where one can always scale the metric to ensure $c \in \{ -1, 0 , 1 \}$.

Because we fix the class $[\phir]_g$ of $\G$-structures inducing the same Riemannian metric $g$, we can describe $\phii$ by~\eqref{isog2form} in terms of the pair $(f, X)$. Thus, the data of an isometric soliton consists of the quadruple $(f, X, Y, c)$ where $(f, X)$ is determined up to an overall sign and satisfies $f^2 + |X|^2 = 1$.

\subsection{Formulas for a particular class of Riemannian metrics} \label{Laplacian-sec}

In this section we consider the total space of a real vector bundle as a smooth manifold, equipped with a special class of Riemannian metrics. This situation includes as particular cases all the torsion-free $\G$-manifolds we consider in this paper, namely cylinders over Calabi-Yau manifolds, cones over nearly K\"ahler manifolds (including the Euclidean space $\R^7$ over the round sphere $S^6$), and the Bryant--Salamon $\G$-manifolds. The induced Riemannian metrics in these examples can all be considered as particular cases of this special class of metrics. Treating them all in this way allows us to derive general formulas for certain Laplacians and other geometric quantities which we can then apply to all of these examples.

Let $E$ be a smooth manifold which is the total space of a real vector bundle over a base $B$, with projection $\pi \colon E \to B$. We assume that $B$ is equipped with a Riemannian metric $\gdn$, and that $E$ is equipped with a fibre metric and a compatible connection. Hence $TE$ splits as a sum $H \oplus V$ of horizontal and vertical sub-bundles, where $H \cong \pi^* TB$ and $V = \ker d\pi$, and thus $H$ and $V$ inherit fibre metrics. We thus get an induced splitting $T^* E = H^* \oplus V^*$.

Let $n = \dim B$ and $m = \rank E$. Let $\{ b_1, \ldots, b_n \}$ be a local $\gdn$-orthonormal coframe for $B$, defined on $\scrU \subset B$. The pullbacks to $E$, still denoted $\{ b_1, \ldots, b_n \}$, are a local basis of sections for the bundle $H^*$ of horizontal $1$-forms on $E$. Let $\{ \sigma_1, \ldots, \sigma_m \}$ be a local orthonormal frame of sections of $E$ also defined on $\scrU$. These determine local \emph{fibre coordinates} $x_{\alpha}$ on $\pi^{-1}(\scrU) \subset E$, where $1 \leq \alpha \leq m$. They also determine 1-forms $\{ \zeta_1, \ldots, \zeta_m \}$ which annihilate the distribution $H$ and satisfy $\zeta_\alpha(\sv) = x_\alpha(\rj \sv)$ for $\sv\in V$, where $\rj$ denotes the canonical isomorphism between the fibers $V_p$ and $E_{\pi(p)}$ for $p\in E$. In particular, the $\zeta_\alpha$ are a local frame of sections for the bundle $V^*$ of vertical $1$-forms on $E$. It is well-known (and easy to check) that
\begin{equation} \label{eq:general-zeta}
\zeta_{\alpha} = dx_{\alpha} + x_{\beta} \omega_{\alpha \beta},
\end{equation}
where the $\omega_{\alpha \beta}$ are $1$-forms pulled back from the base $B$, and are skew-symmetric in $\alpha, \beta$.

Let $r$ denote the distance from the origin in the fibres of $E$ with respect to the fibre metric. In terms of these fibre coordinates, which are defined with respect to a local \emph{orthonormal} frame for $E$, we have
\begin{equation} \label{eq:general-rdr}
r^2 = \sum_{\alpha=1}^m x_{\alpha}^2 \quad \text{and} \quad r dr = x_{\alpha} d x_{\alpha} = x_{\alpha} \zeta_{\alpha}.
\end{equation}

Fix two smooth positive functions $h, k$ of $r \in [0, \infty)$. We define a Riemannian metric $g$ on the total space of $E$ such that $\{ h b_1, \ldots, h b_n, k \zeta_1, \ldots, k \zeta_m \}$ is a local orthonormal coframe. That is,
\begin{equation} \label{eq:general-bundle-metric}
g = h^2 \Big(\sum_{i=1}^n b_i^2\Big) + k^2 \Big(\sum_{\alpha=1}^m \zeta_{\alpha}^2\Big). 
\end{equation}
This is clearly independent of the choices of $\{ b_1, \ldots, b_n \}$ and $\{ \zeta_1, \ldots, \zeta_m \}$.

Let $\Delta$ denote the rough Laplacian for this metric~\eqref{eq:general-bundle-metric}. The goal of this section is to compute explicit formulas, in terms of $h, k$ and their derivatives, for $\Delta s$, $\Delta X$, and $|\nabla X|^2$, when $s = s(r)$ is any smooth function on $E$ depending only on $r \geq 0$ and $X$ is a vector field of the form $X = s \frac{\partial}{\partial r}$. We need such formulas in order to compute $\tdiv T$ in equation~\eqref{simpledivT} when $f = f(r)$ and $X = s(r) \frac{\partial}{\partial r}$.

In what follows, repeated indices $i, j$ are summed from $1$ to $n$, and repeated indices $\alpha, \beta$ are summed from $1$ to $m$, regardless of whether they appear as superscripts or subscripts.

Note that the general formula for the rough Laplacian from~\eqref{eq:rough-Lap} holds with respect to an arbitrary local frame for $TE$. We choose the \emph{particular} local frame to be $\{ \se_1, \ldots, \se_n, \sv_1, \ldots, \sv_m \}$ where
\begin{equation} \label{eq:general-frame}
b_i (\se_j) = \delta_{ij}, \quad b_i (\sv_{\beta}) = 0, \quad \zeta_{\alpha} (\se_j) = 0, \quad \zeta_{\alpha} (\sv_{\beta}) = \delta_{\alpha \beta}.
\end{equation}
It follows from~\eqref{eq:general-bundle-metric} that
\begin{equation} \label{eq:bundle-metric-frame}
g(\se_i, \se_j) = h^2 \delta_{ij}, \quad g(\se_i, \sv_{\alpha}) = 0, \quad g(\sv_{\alpha}, \sv_{\beta}) = k^2 \delta_{\alpha \beta}. 
\end{equation}
Note that the vector fields $\sv_1, \ldots, \sv_m$ are vertical for the projection $\pi \colon E \to B$. 

From~\eqref{eq:general-zeta} and~\eqref{eq:general-frame}, we have
$$ \sv_{\beta} x_{\alpha} = dx_{\alpha} (\sv_{\beta}) = \zeta_{\alpha} (\sv_{\beta}) = \delta_{\alpha \beta}, $$
so $\sv_{\alpha}$ can be thought of as partial differentiation with respect to $x_{\alpha}$ in the vertical (fibre) direction. In particular, since~\eqref{eq:general-rdr} then gives $dr (\sv_{\alpha}) = r^{-1} x_{\alpha}$, we have
\begin{equation} \label{eq:sv-r}
\sv_{\alpha} r = \frac{x_{\alpha}}{r}.
\end{equation}
It follows that partial differentiation with respect to $r$ in the vertical direction is given by the vertical vector field
\begin{equation} \label{eq:dbydr}
\dib{r} = (\sv_{\alpha} r) \sv_{\alpha} = \frac{x_{\alpha}}{r} \sv_{\alpha}.
\end{equation}
The above also shows that
\begin{equation} \label{eq:normdbydr}
| \dbydr |^2 = k^2.
\end{equation}
We further obtain the relations
\begin{align*}
\langle \dbydr, \sv_{\beta} \rangle & = \frac{x_{\alpha}}{r} \langle \sv_{\alpha}, \sv_{\beta} \rangle = \frac{k^2 x_{\alpha}}{r} \delta_{\alpha \beta}, & \langle \dbydr, \se_i \rangle & = 0,
\end{align*}
from which it follows that
\begin{equation} \label{eq:gradr}
\langle \dbydr, X \rangle = k^2 dr (X) = k^2 \langle \nabla r, X \rangle \quad \text{for all $X$, so} \quad \nabla r = \frac{1}{k^2} \dib{r}.
\end{equation}

For $s = s(r)$, by~\eqref{eq:general-rdr} we have
$$ ds = s' dr = \frac{s'}{r} r dr = \frac{s'}{r} x_{\beta} \zeta_{\beta}, $$
and thus
\begin{equation} \label{eq:nabis}
\nabla_i s : = \se_i (s) = ds (\se_i) = 0,
\end{equation}
and
\begin{equation} \label{eq:nabalphas}
\nabla_{\alpha} s : = \sv_{\alpha} (s) = ds (\sv_{\alpha}) = \frac{x_{\alpha} s'}{r}.
\end{equation}
It also follows immediately from the above or from~\eqref{eq:gradr} that
\begin{equation} \label{eq:nablas}
\nabla s = g^{\alpha \beta} \frac{x_{\alpha}}{r} s' \sv_{\beta} = \frac{s'}{k^2} \dib{r}.
\end{equation}
We now derive various preliminary results for metrics of the form~\eqref{eq:general-bundle-metric}.

\begin{lemma} \label{lemma:LYg}
Let $g$ be the metric of~\eqref{eq:general-bundle-metric}, and let $Y = b \frac{\partial}{\partial r}$ where $b = b(r)$. Suppose that $\mathcal{L}_Y g = 2 c g$.
Then $c h = b h'$ and $c k = b k' + k b'$. Moreover, if $m>1$ then $b = \mu r$ for some constant $\mu$, so that 
$c h = \mu r h'$ and $c k = \mu r k' + \mu k$.
\end{lemma}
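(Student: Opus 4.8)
The plan is to compute $\mathcal{L}_Y g$ directly for $Y = b\frac{\partial}{\partial r}$ using the Koszul-type formula $(\mathcal{L}_Y g)(U,W) = \langle \nabla_U Y, W\rangle + \langle U, \nabla_W Y\rangle$, and then read off the diagonal components relative to the frame $\{\se_i, \sv_\alpha\}$ together with an off-diagonal component that forces $b$ to be linear in $r$ when $m>1$. First I would note that $Y = b(r)\,\frac{\partial}{\partial r} = \frac{b}{r}x_\alpha \sv_\alpha$ by \eqref{eq:dbydr}, so $Y$ is vertical, and $Y^\flat = b\, dr^\flat$; more usefully, since $\langle \frac{\partial}{\partial r}, \frac{\partial}{\partial r}\rangle = k^2$ by \eqref{eq:normdbydr} and $\nabla r = \frac{1}{k^2}\frac{\partial}{\partial r}$ by \eqref{eq:gradr}, we get $Y^\flat = b k^2\, dr$. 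I would compute $dY^\flat = (bk^2)'\,dr\wedge\text{(nothing)} = 0$... — more carefully, $d(bk^2\,dr) = d(bk^2)\wedge dr = (bk^2)'\,dr\wedge dr = 0$, so $Y^\flat$ is closed, which already tells us $\mathcal{L}_Y g$ is symmetric with the structure of $2\,\text{Hess}$ of a function; but it is cleaner to just use that $\mathcal{L}_Y g = 2\nabla Y^\flat$ (symmetrized), and since $dY^\flat=0$, in fact $\mathcal{L}_Y g = 2\nabla(\text{the }1\text{-form }bk^2 dr)$ as a symmetric tensor.

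The key computation is therefore the Hessian-type expression $(\mathcal{L}_Y g)(U,W) = U\langle Y,W\rangle + W\langle Y,U\rangle - 2\langle Y, \nabla_U W\rangle$ — or equivalently, evaluating $\nabla_U Y$ for $U \in \{\se_i, \sv_\alpha\}$. I would evaluate on the three cases. On $(\sv_\alpha, \sv_\beta)$: using $\langle Y, \sv_\beta\rangle = bk^2 x_\beta/r$ (from \eqref{eq:gradr}, since $\langle \frac{\partial}{\partial r},\sv_\beta\rangle = k^2 x_\beta/r$) and differentiating via $\sv_\alpha r = x_\alpha/r$, $\sv_\alpha x_\beta = \delta_{\alpha\beta}$, plus the Christoffel symbols $\nabla_{\sv_\alpha}\sv_\beta$ for the metric \eqref{eq:general-bundle-metric} — which are governed by $k=k(r)$ — should collapse (after contracting or tracing over the fibre index structure, and using $x_\alpha x_\beta/r^2$ appearing in the right combination) to the single scalar condition that the vertical block of $\mathcal{L}_Y g$ equals $2(bk' + kb')k\,(\zeta_\alpha^2\text{-coefficient})$, giving $ck = bk' + kb'$. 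On $(\se_i,\se_j)$: here $\langle Y, \se_j\rangle = 0$ since $Y$ is vertical and $\se_j$ horizontal, so $(\mathcal{L}_Y g)(\se_i,\se_j) = -2\langle Y, \nabla_{\se_i}\se_j\rangle$; the relevant piece of $\nabla_{\se_i}\se_j$ is its vertical component, which by the warped/bundle structure is proportional to $-h h' \nabla r \cdot \delta_{ij} = -\frac{hh'}{k^2}\frac{\partial}{\partial r}\delta_{ij}$, whence $(\mathcal{L}_Y g)(\se_i,\se_j) = 2bhh'\delta_{ij} = 2c\,h^2\delta_{ij}$, i.e. $ch = bh'$. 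On the mixed pair $(\se_i, \sv_\alpha)$, $\mathcal{L}_Y g$ must vanish since $g$ is block-diagonal and $c g$ has no mixed part — and extracting this vanishing is exactly where the constraint on $b$ enters: the mixed component of $\nabla Y$ involves the connection $1$-forms $\omega_{\alpha\beta}$ from \eqref{eq:general-zeta} together with a term $\propto (b/r)'$-type expression times $x_\beta$, and requiring the $\omega$-dependent part to cancel for \emph{all} configurations of the fibre coordinates $x_\alpha$ (which is possible when $m>1$ because the $\zeta_\alpha$ genuinely mix) forces $\frac{d}{dr}\big(\frac{b}{r}\big) = 0$, i.e. $b = \mu r$; substituting back gives $ch = \mu r h'$ and $ck = \mu r k' + \mu k$.

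The main obstacle I anticipate is the honest bookkeeping of the Levi-Civita connection of \eqref{eq:general-bundle-metric} in the $\{\se_i,\sv_\alpha\}$ frame — in particular disentangling the $r$-derivative terms (coming from $h', k'$, which produce the warped-product-like behaviour) from the connection-form terms (coming from the $\omega_{\alpha\beta}$, which encode how the frame twists over $B$) — and making sure the $x_\alpha$-dependence organizes itself correctly so that the final conditions are genuinely functions of $r$ alone. The subtle point justifying ``$m>1 \Rightarrow b = \mu r$'' is that when $m=1$ there is only one fibre coordinate, $\zeta_1 = dx_1$ with $\omega_{11}=0$, so the mixed-component obstruction is vacuous and $b$ need not be linear; when $m>1$ the off-diagonal vanishing is a nontrivial identity in the $x_\alpha$ that can only hold if the non-$\omega$ part vanishes identically, which is the linearity of $b$. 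I would make this precise by writing the mixed component of $\mathcal{L}_Y g$ as (coefficient)$\cdot x_\alpha b_i$ plus ($\omega$-terms) and observing the two families of $1$-forms are pointwise independent for $m>1$.
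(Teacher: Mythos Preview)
Your approach via the Levi-Civita connection is workable in principle but differs from the paper's: the paper computes $\mathcal{L}_Y g$ directly from Cartan's formula applied to the coframe $1$-forms $b_i$ and $\zeta_\alpha$, never touching Christoffel symbols. Since $b_i$ is pulled back from $B$ and $Y$ is vertical, $\mathcal{L}_Y b_i = 0$; and a short computation with~\eqref{eq:general-zeta} gives $\mathcal{L}_Y \zeta_\alpha = \tfrac{b}{r}\zeta_\alpha + \big(\tfrac{b'}{r}-\tfrac{b}{r^2}\big)x_\alpha\,dr$, in which all the $\omega_{\alpha\beta}$ terms cancel. This yields
\[
\mathcal{L}_Y g = 2bhh'\sum_i b_i^2 + \Big(2bkk' + \tfrac{2bk^2}{r}\Big)\sum_\alpha \zeta_\alpha^2 + 2k^2\Big(b'-\tfrac{b}{r}\Big)\,dr^2,
\]
from which the two scalar equations drop out immediately.

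Your proposal contains a genuine misidentification of where the constraint $b=\mu r$ originates. You claim it comes from the mixed $(\se_i,\sv_\alpha)$ block, arguing that the connection forms $\omega_{\alpha\beta}$ appear there and their cancellation forces $(b/r)'=0$. But the displayed formula above shows $\mathcal{L}_Y g$ has \emph{no} $b_i\zeta_\alpha$ cross terms at all: the mixed block vanishes identically, for any $b(r)$ and any $m$, and the $\omega_{\alpha\beta}$ have already disappeared. The obstruction for $m>1$ lives entirely inside the \emph{vertical} block: since $dr = \tfrac{x_\alpha}{r}\zeta_\alpha$, the extra $dr^2$ term contributes $2k^2(b'-\tfrac{b}{r})\tfrac{x_\alpha x_\beta}{r^2}\,\zeta_\alpha\zeta_\beta$ with $\alpha\neq\beta$, and these off-diagonal vertical components have no counterpart in $2cg = 2ck^2\sum_\alpha\zeta_\alpha^2$. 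That is what forces $b'-\tfrac{b}{r}=0$. When $m=1$ there is only $\zeta_1=dr$, so $\sum_\alpha\zeta_\alpha^2 = dr^2$ and the ``extra'' term is absorbed --- which is why the constraint is absent in that case. Your sketch of the $(\sv_\alpha,\sv_\beta)$ block also glosses over this: it does \emph{not} collapse to a single scalar times $\delta_{\alpha\beta}$ unless $b'-\tfrac{b}{r}=0$ already holds.
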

\begin{proof}
From~\eqref{eq:general-zeta} and~\eqref{eq:dbydr}, we compute that
$$ Y \hk \zeta_{\alpha} = \frac{b x_{\gamma}}{r} \sv_{\gamma} \hk ( d x_{\alpha} + x_{\beta} \omega_{\alpha \beta}) = \frac{b x_{\gamma}}{r} (\delta_{\gamma \alpha} + 0 ) = \frac{b x_{\alpha}}{r}, $$
and
$$ Y \hk (d \zeta_{\alpha}) = \frac{b x_{\gamma}}{r} \sv_{\gamma} \hk ( d x_{\beta} \wedge \omega_{\alpha \beta} + x_{\beta} d \omega_{\alpha \beta} ) = \frac{b x_{\gamma}}{r} ( \delta_{\gamma \beta} \omega_{\alpha \beta} + 0 ) = \frac{b x_{\beta}}{r} \omega_{\alpha \beta}. $$
Using these, we obtain
\begin{align*}
\mathcal{L}_Y \zeta_{\alpha} & = Y \hk d \zeta_{\alpha} + d (Y \hk \zeta_{\alpha}) = \frac{b x_{\beta}}{r} \omega_{\alpha \beta} + d \Big( \frac{b x_{\alpha}}{r} \Big) \\
& = \frac{b x_{\beta}}{r} \omega_{\alpha \beta} + \Big( \frac{b'}{r} - \frac{b}{r^2} \Big) x_{\alpha} dr + \frac{b}{r} dx_{\alpha} \\
& = \frac{b}{r} \zeta_{\alpha} + \Big( \frac{b'}{r} - \frac{b}{r^2} \Big) x_{\alpha} dr.
\end{align*}
From the above and the fact that $r dr = x_{\alpha} \zeta_{\alpha}$, we obtain
$$ \mathcal{L}_Y \left( \sum_{\alpha=1}^m \zeta_{\alpha}^2 \right) = \frac{2b}{r} \left( \sum_{\alpha=1}^m \zeta_{\alpha}^2 \right) + 2 \Big( b' - \frac{b}{r} \Big) dr^2. $$
Since $\mathcal{L}_Y b_i = 0$ for $Y = b \frac{\partial}{\partial r}$, from the above and~\eqref{eq:general-bundle-metric} we deduce that
\begin{equation} \label{eq:LYg-general}
\mathcal{L}_Y g = 2 b h h' \left( \sum_{i=1}^n b_i^2 \right) + \Big( 2 b k k' + \frac{2 b k^2}{r} \Big) \left( \sum_{\alpha=1}^m \zeta_{\alpha}^2 \right) + 2 k^2 \Big( b' - \frac{b}{r} \Big) dr^2.
\end{equation}

Suppose $m=1$. Then $\alpha = 1$, $x_1 = r$, and $\zeta_1 = d x_1 = dr$. Thus $\sum_{\alpha} \zeta_{\alpha}^2 = dr^2$, and using~\eqref{eq:LYg-general}, the equation $\mathcal{L}_Y g = 2 c g$ becomes
$$ 2 b h h' \left( \sum_{i=1}^n b_i^2 \right) + \left( 2 b k k' + \frac{2 b k^2}{r} + 2 k^2 \Big( b' - \frac{b}{r}\Big) \right) dr^2 = 2 c \left( h^2 \left( \sum_{i=1}^n b_i^2 \right) + k^2 dr^2 \right). $$
Simplifying and comparing coefficients yields $2 b h h' = 2 c h^2$ and $2 b k k' + 2 k^2 b' = 2 c k^2$. Since $h, k > 0$, this becomes $ch = b h'$ and $c k  = b k' + k b'$ as claimed.

Now suppose that $m > 1$. The $dr^2$ term in~\eqref{eq:LYg-general} will contribute factors of the form $\frac{x_{\alpha} x_{\beta}}{r^2} \zeta_{\alpha} \zeta_{\beta}$ for $\alpha \neq \beta$, which have no counterparts in $2 c g$. Thus we must have $b' - \frac{b}{r} = 0$, so $b = \mu r$ for some constant $\mu$. Then equating coefficients of $\sum_i b_i^2$ and $\sum_{\alpha} \zeta_{\alpha}^2$ again gives $ch = b h'$ and $ck = b k' + k b'$, which yields the result upon substituting $b = \mu r$.
\end{proof}

\begin{lemma} \label{lemma:bundle-Gammas}
Let $g$ be the metric of~\eqref{eq:general-bundle-metric}, and let $\{ \se_1, \ldots, \se_n, \sv_1, \ldots, \sv_m \}$ be the local frame given by~\eqref{eq:general-frame}. Then the Lie brackets $[\se_i, \sv_{\gamma}]$ and $[\sv_{\alpha}, \sv_{\beta}]$ satisfy
\begin{equation} \label{eq:bundle-Gammas-brackets}
g( [\se_i, \sv_{\gamma}], \se_j ) = 0, \qquad g( [\sv_{\alpha}, \sv_{\beta}], \sv_{\gamma} ) = 0.
\end{equation}
\end{lemma}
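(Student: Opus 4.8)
The plan is to reduce both identities to the vanishing of $b_j([\se_i,\sv_\gamma])$ and $\zeta_\gamma([\sv_\alpha,\sv_\beta])$, and then to evaluate these via the Cartan formula for $d$ together with the fact that $b_j$ and the connection $1$-forms $\omega_{\alpha\beta}$ are pulled back from the base.

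First I would note that $\{b_1,\ldots,b_n,\zeta_1,\ldots,\zeta_m\}$ is, by~\eqref{eq:general-frame}, precisely the coframe dual to the local frame $\{\se_1,\ldots,\se_n,\sv_1,\ldots,\sv_m\}$, so any vector field $W$ on $E$ decomposes as $W = b_i(W)\se_i + \zeta_\alpha(W)\sv_\alpha$. Combined with~\eqref{eq:bundle-metric-frame} this gives $g(W,\se_j) = h^2 b_j(W)$ and $g(W,\sv_\gamma) = k^2\zeta_\gamma(W)$ for every $W$. Hence~\eqref{eq:bundle-Gammas-brackets} is equivalent to the assertions $b_j([\se_i,\sv_\gamma]) = 0$ and $\zeta_\gamma([\sv_\alpha,\sv_\beta]) = 0$.

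Next I would apply the identity $\theta([U,V]) = U(\theta(V)) - V(\theta(U)) - d\theta(U,V)$, valid for any $1$-form $\theta$. For $\theta = b_j$, $U = \se_i$, $V = \sv_\gamma$: the first two terms vanish because $b_j(\sv_\gamma) = 0$ and $b_j(\se_i) = \delta_{ij}$ are constant, while $db_j(\se_i,\sv_\gamma) = 0$ since $b_j$, and therefore $db_j$, is a pullback from $B$ and $\sv_\gamma$ is vertical (so $d\pi(\sv_\gamma) = 0$); this proves the first identity. For $\theta = \zeta_\gamma$, $U = \sv_\alpha$, $V = \sv_\beta$: the first two terms vanish because $\zeta_\gamma(\sv_\beta) = \delta_{\gamma\beta}$ and $\zeta_\gamma(\sv_\alpha) = \delta_{\gamma\alpha}$ are constant, leaving $\zeta_\gamma([\sv_\alpha,\sv_\beta]) = -d\zeta_\gamma(\sv_\alpha,\sv_\beta)$. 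By~\eqref{eq:general-zeta}, $d\zeta_\gamma = dx_\delta\wedge\omega_{\gamma\delta} + x_\delta\,d\omega_{\gamma\delta}$; the summand $x_\delta\,d\omega_{\gamma\delta}$ annihilates the vertical pair $(\sv_\alpha,\sv_\beta)$ because $d\omega_{\gamma\delta}$ is pulled back from $B$, and $(dx_\delta\wedge\omega_{\gamma\delta})(\sv_\alpha,\sv_\beta) = dx_\delta(\sv_\alpha)\,\omega_{\gamma\delta}(\sv_\beta) - dx_\delta(\sv_\beta)\,\omega_{\gamma\delta}(\sv_\alpha) = 0$ since $\omega_{\gamma\delta}$ vanishes on vertical vectors. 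Hence $d\zeta_\gamma(\sv_\alpha,\sv_\beta) = 0$ and the second identity follows.

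Since each step is a direct evaluation there is no substantive obstacle; the only bookkeeping point is to keep track of which objects are horizontal, i.e.\ pulled back from $B$ — namely $b_j$, $\omega_{\alpha\beta}$, $db_j$, $d\omega_{\alpha\beta}$ — and hence vanish on vertical vectors, as opposed to $dx_\delta$ and $\zeta_\gamma$, which do not. One could alternatively observe that the second identity is just the integrability of the vertical distribution $V = \ker d\pi$, but the computation above keeps the lemma self-contained and exhibits the $h,k$-independence of both quantities.
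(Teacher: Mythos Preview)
Your proof is correct and follows essentially the same route as the paper: both reduce the two identities to the vanishing of $b_j([\se_i,\sv_\gamma])$ and $\zeta_\gamma([\sv_\alpha,\sv_\beta])$ via the Cartan formula $d\theta(U,V)=U(\theta(V))-V(\theta(U))-\theta([U,V])$, and then use that $b_j$, $db_j$, $\omega_{\gamma\delta}$, $d\omega_{\gamma\delta}$ are pulled back from $B$ (hence annihilate vertical vectors) together with the explicit expression $d\zeta_\gamma = dx_\delta\wedge\omega_{\gamma\delta}+x_\delta\,d\omega_{\gamma\delta}$. Your write-up is slightly more explicit in first recording $g(W,\se_j)=h^2 b_j(W)$, $g(W,\sv_\gamma)=k^2\zeta_\gamma(W)$ and in expanding the wedge product term by term, but the argument is the same.
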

\begin{proof}
We compute using~\eqref{eq:general-frame} that
\begin{align*}
(d b_j) (\se_i, \sv_{\gamma}) & = \se_i \big( b_j(\sv_{\gamma}) \big) - \sv_{\gamma} \big( b_j (\se_i) \big) - b_j ([\se_i, \sv_{\gamma}]) \\
& = 0 - 0 - b_j ([\se_i, \sv_{\gamma}]).
\end{align*}
Thus $[\se_i, \sv_{\gamma}]$ is in the span of $\sv_1, \ldots, \sv_m$ if and only if $(db_j)(\se_i, \sv_{\gamma}) = 0$. But $b_j$ is (pulled back from) a $1$-form on the base $B$, and thus so is $d b_j$, hence $(db_j)(\se_i, \sv_{\gamma}) = 0$, proving the first statement.

Similarly, we have
\begin{align*}
(d \zeta_{\gamma}) (\sv_{\alpha}, \sv_{\beta}) & = \sv_{\alpha} \big( \zeta_{\gamma}(\sv_{\beta}) \big) - \sv_{\beta} \big( \zeta_{\gamma} (\sv_{\alpha}) \big) - \zeta_{\gamma} ([\sv_{\alpha}, \sv_{\beta}]) \\
& = 0 - 0 - \zeta_{\gamma} ([\sv_{\alpha}, \sv_{\beta}]).
\end{align*}
Thus $[\sv_{\alpha}, \sv_{\beta}]$ is in the span of $\se_1, \ldots, \se_n$ if and only if $(d\zeta_{\gamma})(\sv_{\alpha}, \sv_{\beta}) = 0$. Taking $d$ of~\eqref{eq:general-zeta} shows that $d \zeta_{\gamma} = dx_{\beta} \wedge \omega_{\gamma\beta} + x_{\beta} d \omega_{\gamma\beta}$, from which it is clear, since $\omega_{\gamma\beta}$ is pulled back from the base, that $(d \zeta_{\gamma})(\sv_{\alpha}, \sv_{\beta}) = 0$, proving the second statement.
\end{proof}

\begin{prop} \label{prop:bundle-Gammas}
Let $g$ be the metric of~\eqref{eq:general-bundle-metric}, and let $\{ \se_1, \ldots, \se_n, \sv_1, \ldots, \sv_m \}$ be the local frame given by~\eqref{eq:general-frame}. Write $g_{ij} = g(\se_i, \se_j) = h^2 \delta_{ij}$ and $g_{\alpha \beta} = g(\sv_{\alpha}, \sv_{\beta}) = k^2 \delta_{\alpha \beta}$, with inverses $g^{ij} = h^{-2} \delta^{ij}$ and $g^{\alpha \beta} = k^{-2} \delta^{\alpha \beta}$, respectively. Define the Christoffel symbols $\Gamma$ of the Levi-Civita connection $\nabla$ of $g$ by
$$ \nabla_{\se_i} \se_j = \Gamma_{ij}^k \se_k + \Gamma_{ij}^{\gamma} \sv_{\gamma}, \qquad \nabla_{\sv_{\alpha}} \sv_{\beta} = \Gamma_{\alpha \beta}^k \se_k + \Gamma_{\alpha \beta}^{\gamma} \sv_{\gamma}, $$
and similarly for other covariant derivatives, which we do not require. Then we have
\begin{equation} \label{eq:bundle-Gammas}
g^{ij} \Gamma^{\gamma}_{ij} = - \frac{n h'}{h k^2 r} x_{\gamma}, \qquad g^{\alpha \beta} \Gamma_{\alpha \beta}^{\gamma} = \frac{(2-m)k'}{k^3 r} x_{\gamma},
\end{equation}
and
\begin{equation} \label{eq:bundle-Gammas-v}
\Gamma_{\alpha \beta}^{\gamma} = \frac{k'}{kr} ( x_{\alpha} \delta_{\beta \gamma} + x_{\beta} \delta_{\alpha \gamma} - x_{\gamma} \delta_{\alpha \beta} ).
\end{equation}
\end{prop}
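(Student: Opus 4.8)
\section*{Proof proposal}

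The plan is to apply the Koszul formula
\[
2 g(\nabla_A B, C) = A\, g(B,C) + B\, g(A,C) - C\, g(A,B) + g([A,B],C) - g([B,C],A) - g([A,C],B)
\]
to the frame $\{ \se_1, \ldots, \se_n, \sv_1, \ldots, \sv_m \}$, using three ingredients already established: the metric coefficients~\eqref{eq:bundle-metric-frame}; the fact that $\se_i$ annihilates any function of $r$ (by~\eqref{eq:nabis}) while $\sv_\alpha r = x_\alpha/r$ (by~\eqref{eq:sv-r}), so that $\sv_\alpha(h^2) = 2 h h' \tfrac{x_\alpha}{r}$ and $\sv_\alpha(k^2) = 2 k k' \tfrac{x_\alpha}{r}$; and the bracket identities~\eqref{eq:bundle-Gammas-brackets} from Lemma~\ref{lemma:bundle-Gammas}.

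I would first prove~\eqref{eq:bundle-Gammas-v}. Taking $A = \sv_\alpha$, $B = \sv_\beta$, $C = \sv_\gamma$, all three bracket terms in the Koszul formula have the form $g([\sv_\bullet, \sv_\bullet], \sv_\bullet)$ and hence vanish by the second identity in~\eqref{eq:bundle-Gammas-brackets}; the three derivative terms are $\sv_\mu g(\sv_\nu, \sv_\lambda) = 2 k k' \tfrac{x_\mu}{r}\delta_{\nu\lambda}$, which gives $g(\nabla_{\sv_\alpha}\sv_\beta, \sv_\gamma) = \tfrac{k k'}{r}(x_\alpha \delta_{\beta\gamma} + x_\beta \delta_{\alpha\gamma} - x_\gamma \delta_{\alpha\beta})$. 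Raising the last index with $g^{\gamma\delta} = k^{-2}\delta^{\gamma\delta}$ yields~\eqref{eq:bundle-Gammas-v}. Contracting this against $g^{\alpha\beta} = k^{-2}\delta^{\alpha\beta}$, the three terms produce $x_\gamma$, $x_\gamma$, and $-m x_\gamma$ respectively, so $g^{\alpha\beta}\Gamma^\gamma_{\alpha\beta} = (2-m)\tfrac{k'}{k^3 r}x_\gamma$, the second formula in~\eqref{eq:bundle-Gammas}.

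For the first formula in~\eqref{eq:bundle-Gammas} I would take $A = \se_i$, $B = \se_j$, $C = \sv_\gamma$. The terms $\se_i g(\se_j,\sv_\gamma)$ and $\se_j g(\se_i,\sv_\gamma)$ vanish because $g(\se_\bullet, \sv_\bullet) \equiv 0$, and $g([\se_i,\sv_\gamma],\se_j) = g([\se_j,\sv_\gamma],\se_i) = 0$ by the first identity in~\eqref{eq:bundle-Gammas-brackets}, leaving
\[
2 g(\nabla_{\se_i}\se_j, \sv_\gamma) = -\sv_\gamma g(\se_i,\se_j) + g([\se_i,\se_j],\sv_\gamma) = -2 h h' \tfrac{x_\gamma}{r}\delta_{ij} + g([\se_i,\se_j],\sv_\gamma),
\]
so that $k^2 \Gamma^\gamma_{ij} = -h h' \tfrac{x_\gamma}{r}\delta_{ij} + \tfrac12 g([\se_i,\se_j],\sv_\gamma)$. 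Here is the one point worth flagging: unlike the vertical distribution, the horizontal distribution is in general \emph{not} integrable (its obstruction lives in $d\omega_{\alpha\beta}$, i.e.\ the curvature of the bundle connection), so $[\se_i,\se_j]$ genuinely has a vertical component and $\Gamma^\gamma_{ij}$ is not as simple as~\eqref{eq:bundle-Gammas-v}. This is harmless for the quantity we need, however: since $[\se_i,\se_j]$ is antisymmetric in $i,j$ while $g^{ij} = h^{-2}\delta^{ij}$ is symmetric, the term $g([\se_i,\se_j],\sv_\gamma)$ drops out on contraction, and we get $g^{ij}\Gamma^\gamma_{ij} = h^{-2}k^{-2}\bigl(-n h h' \tfrac{x_\gamma}{r}\bigr) = -\tfrac{n h'}{h k^2 r}x_\gamma$.

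The computation is routine once organized this way; the only subtlety is recognizing that the non-integrability of the horizontal distribution — and hence all dependence on the connection $1$-forms $\omega_{\alpha\beta}$ and on the structure functions of the base coframe $\{b_i\}$ — washes out of precisely the three traced-or-fully-vertical Christoffel expressions appearing in~\eqref{eq:bundle-Gammas} and~\eqref{eq:bundle-Gammas-v}.
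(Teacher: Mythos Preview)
Your proposal is correct and follows essentially the same route as the paper: Koszul formula plus the bracket identities of Lemma~\ref{lemma:bundle-Gammas}. The derivation of~\eqref{eq:bundle-Gammas-v} and its trace is identical to the paper's.

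For the first formula in~\eqref{eq:bundle-Gammas} there is a minor but genuine difference worth noting. The paper does not apply Koszul directly to $(\se_i,\se_j,\sv_\gamma)$; instead it writes $g(\nabla_{\se_i}\se_j,\sv_\gamma) = -g(\se_j,\nabla_{\se_i}\sv_\gamma)$ by metric compatibility, then uses torsion-freeness to replace $\nabla_{\se_i}\sv_\gamma$ by $\nabla_{\sv_\gamma}\se_i + [\se_i,\sv_\gamma]$, kills the bracket with Lemma~\ref{lemma:bundle-Gammas}, and traces $g(\se_i,\nabla_{\sv_\gamma}\se_i) = \tfrac12 \sv_\gamma(h^2)$. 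This sidesteps the bracket $[\se_i,\se_j]$ entirely. Your approach confronts $[\se_i,\se_j]$ head-on and disposes of its vertical part by the $i\leftrightarrow j$ antisymmetry under the symmetric contraction $g^{ij}$. Your route is slightly more uniform (one Koszul template for everything) and your observation that the curvature of the bundle connection is exactly what cancels on tracing is a nice conceptual point the paper leaves implicit; the paper's route is marginally slicker in that it never needs to name that term at all.
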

\begin{proof}
From $\nabla_{\se_i} \se_j = \Gamma_{ij}^k \se_k + \Gamma_{ij}^{\gamma} \sv_{\gamma}$ and~\eqref{eq:bundle-metric-frame}, we observe using metric-compatibility and torsion-freeness of $\nabla$, and the orthogonality $g( \se_i, \sv_{\gamma} ) = 0$, that
\begin{align*}
k^2 \Gamma_{ij}^{\gamma} & = g( \nabla_{\se_i} \se_j, \sv_{\gamma} ) \\
& = \se_i \big( g(\se_j, \sv_{\gamma}) \big) - g( \se_j, \nabla_{\se_i} \sv_{\gamma} ) \\
& = 0 - g( \se_j, \nabla_{\sv_{\gamma}} \se_i + [\se_i, \sv_{\gamma}] ).
\end{align*}
The term $g( \se_j, [\se_i, \sv_{\gamma}] )$ vanishes by~\eqref{eq:bundle-Gammas-brackets}. Thus using~\eqref{eq:sv-r} the above becomes
\begin{align*}
g^{ij} \Gamma^{\gamma}_{ij} & = h^{-2} \delta^{ij} k^{-2} ( - g( \se_j, \nabla_{\sv_{\gamma}} \se_i ) ) \\
& = - \frac{1}{h^2 k^2} g( \se_i, \nabla_{\sv_{\gamma}} \se_i ) = - \frac{1}{2 h^2 k^2} v_{\gamma} (g( \se_i, \se_i )) \\
& = - \frac{1}{2 h^2 k^2} \sv_{\gamma} (n h^2) = - \frac{2 n h h'}{2 h^2 k^2} \frac{x_{\gamma}}{r},
\end{align*}
establishing the first equation in~\eqref{eq:bundle-Gammas}.

From $\nabla_{\sv_{\alpha}} \sv_{\beta} = \Gamma_{\alpha \beta}^k \se_k + \Gamma_{\alpha \beta}^{\gamma} \sv_{\gamma}$ and~\eqref{eq:bundle-metric-frame}, we observe that
\begin{equation} \label{eq:bundle-Gammas-temp}
k^2 \Gamma_{\alpha \beta}^{\gamma} = g( \nabla_{\sv_{\alpha}} \sv_{\beta}, \sv_{\gamma} ).
\end{equation}
Recall the Koszul formula for the Levi-Civita connection of $g$, which is
\begin{equation*}
\begin{aligned}
2 g( \nabla_X Y, Z) & = X \big( g(Y, Z) \big) + Y \big( g(X, Z) \big) - Z \big( g(X, Y) \big) \\
& \qquad {} - g([X, Z], Y) - g([Y, Z], X) + g([X, Y], Z).
\end{aligned}
\end{equation*}
By~\eqref{eq:bundle-Gammas-brackets},~\eqref{eq:bundle-Gammas-temp}, and~\eqref{eq:sv-r}, the Koszul formula gives
\begin{align*}
2 k^2 \Gamma_{\alpha \beta}^{\gamma} = 2 g( \nabla_{\sv_{\alpha}} \sv_{\beta}, \sv_{\gamma} ) & = \sv_{\alpha} (k^2 \delta_{\beta \gamma}) + \sv_{\beta} (k^2 \delta_{\alpha \gamma}) - \sv_{\gamma} (k^2 \delta_{\alpha \beta}) - 0 - 0 + 0 \\
& = 2 k k' \Big( \frac{x_{\alpha} \delta_{\beta \gamma}}{r} + \frac{x_{\beta} \delta_{\alpha \gamma}}{r} - \frac{x_{\gamma} \delta_{\alpha \beta}}{r} \Big).
\end{align*}
Equation~\eqref{eq:bundle-Gammas-v} then follows.
Taking the trace over $\alpha, \beta$, we then obtain
\begin{align*}
g^{\alpha \beta} \Gamma_{\alpha \beta}^{\gamma} & = k^{-2} \delta^{\alpha \beta} \frac{k'}{k r} (x_{\alpha} \delta_{\beta \gamma} + x_{\beta} \delta_{\alpha \gamma} - x_{\gamma} \delta_{\alpha \beta}) \\
& = \frac{k'}{k^3 r} (x_{\gamma} + x_{\gamma} - m x_{\gamma}),
\end{align*}
establishing the second equation in~\eqref{eq:bundle-Gammas}.
\end{proof}

\begin{cor} \label{cor:general-Laplacian}
Let $s = s(r)$. Then its Laplacian $\Delta s = g^{pq} \nabla_p \nabla_q s$ with respect to the metric $g$ of~\eqref{eq:general-bundle-metric} is
\begin{equation} \label{eq:general-Laplacian}
\Delta s = \frac{1}{k^2} s'' + \Big( \frac{m-1}{k^2 r} + \frac{n h'}{h k^2} + \frac{(m-2)k'}{k^3} \Big) s'.
\end{equation}
\end{cor}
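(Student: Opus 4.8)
The plan is to compute $\Delta s = g^{pq}\nabla_p\nabla_q s$ directly from the coordinate expression~\eqref{eq:rough-Lap} for the analyst's Laplacian, namely $\Delta s = g^{pq}\big(\nabla_p(\nabla_q s) - \Gamma^c_{pq}\nabla_c s\big)$, using the adapted frame $\{\se_i,\sv_\alpha\}$ of~\eqref{eq:general-frame} and the Christoffel data collected in Proposition~\ref{prop:bundle-Gammas}. Since $s$ depends only on $r$, the horizontal derivatives $\nabla_i s = \se_i(s)$ vanish by~\eqref{eq:nabis}, so the only nonzero first derivatives are $\nabla_\alpha s = x_\alpha s'/r$ from~\eqref{eq:nabalphas}. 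This already kills several terms and means the calculation effectively takes place on the vertical subbundle, with the horizontal block contributing only through the cross Christoffel symbols $\Gamma^\gamma_{ij}$.

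First I would split $\Delta s = g^{ij}\nabla_i\nabla_j s + g^{\alpha\beta}\nabla_\alpha\nabla_\beta s$. For the horizontal block, $\nabla_i(\nabla_j s) = \se_i(\se_j s) = 0$, so $g^{ij}\nabla_i\nabla_j s = -g^{ij}\Gamma^c_{ij}\nabla_c s = -g^{ij}\Gamma^\gamma_{ij}\nabla_\gamma s$, and plugging in the first formula of~\eqref{eq:bundle-Gammas} together with $\nabla_\gamma s = x_\gamma s'/r$ gives $\frac{n h'}{h k^2 r}x_\gamma\cdot\frac{x_\gamma s'}{r} = \frac{n h'}{h k^2}s'$ after using $x_\gamma x_\gamma = r^2$. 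For the vertical block, $\nabla_\alpha(\nabla_\beta s) = \sv_\alpha\big(\tfrac{x_\beta s'}{r}\big)$, which I would expand using $\sv_\alpha x_\beta = \delta_{\alpha\beta}$ and $\sv_\alpha r = x_\alpha/r$; then $g^{\alpha\beta}\nabla_\alpha(\nabla_\beta s) = k^{-2}\delta^{\alpha\beta}\sv_\alpha\big(\tfrac{x_\beta s'}{r}\big)$ produces the $\tfrac{1}{k^2}s''$ term together with an $\tfrac{m-1}{k^2 r}s'$ term. Finally the vertical Christoffel contribution $-g^{\alpha\beta}\Gamma^\gamma_{\alpha\beta}\nabla_\gamma s$ uses the second formula of~\eqref{eq:bundle-Gammas}, giving $-\frac{(2-m)k'}{k^3 r}x_\gamma\cdot\frac{x_\gamma s'}{r} = \frac{(m-2)k'}{k^3}s'$. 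Summing the three contributions yields exactly~\eqref{eq:general-Laplacian}.

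The only genuinely fiddly step is the expansion of $\sv_\alpha\big(\tfrac{x_\beta s'}{r}\big)$ and keeping the index bookkeeping straight — one must remember that $s' = s'(r)$ so $\sv_\alpha(s') = s''\cdot\sv_\alpha r = s''x_\alpha/r$, and that contracting $\delta^{\alpha\beta}$ against $\delta_{\alpha\beta}$ gives $m$ while $x_\alpha x_\alpha/r^2 = 1$; the $-x_\beta s'/r^2\cdot\sv_\alpha r$ piece contributes the $-\tfrac{1}{k^2 r}s'$ that combines with the $+\tfrac{m}{k^2 r}s'$ to leave $\tfrac{m-1}{k^2 r}s'$. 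None of this is conceptually hard, but it is the place where a sign or a stray factor is most likely to creep in, so I would carry it out carefully term by term. Everything else is just substitution of the formulas already proved in Proposition~\ref{prop:bundle-Gammas} and equations~\eqref{eq:nabis}--\eqref{eq:nabalphas}.
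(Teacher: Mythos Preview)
Your proposal is correct and follows essentially the same route as the paper's proof: both use the adapted frame $\{\se_i,\sv_\alpha\}$, invoke~\eqref{eq:nabis}--\eqref{eq:nabalphas} to kill the horizontal first derivatives, expand $\sv_\alpha\big(\tfrac{x_\beta s'}{r}\big)$ for the vertical second-derivative term, and then substitute the traced Christoffel symbols from Proposition~\ref{prop:bundle-Gammas}. The only cosmetic difference is that you organize the computation by horizontal/vertical blocks whereas the paper separates the ordinary-derivative and Christoffel contributions first, but the ingredients and manipulations are identical.
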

\begin{proof}
Consider the local frame $\{ \se_1, \ldots, \se_n, \sv_1, \ldots, \sv_m \}$ given by~\eqref{eq:general-frame}. Use $a,b,c$ to denote indices from $1$ to $n+m$, where the first $n$ correspond to the vectors $\se_1, \ldots, \se_n$, and the last $m$ correspond to the vectors $\sv_1, \ldots, \sv_m$. We compute using~\eqref{eq:sv-r},~\eqref{eq:nabis}, and~\eqref{eq:nabalphas} that
\begin{align*}
\Delta s & = g^{ab} \nabla_a (\nabla_b s) - g^{ab} \Gamma_{ab}^c \nabla_c s \\
& = g^{\alpha \beta} \nabla_{\alpha} (\nabla_{\beta} s) - g^{ij} \Gamma_{ij}^{\gamma} \nabla_{\gamma} s - g^{\alpha \beta} \Gamma_{\alpha \beta}^{\gamma} \nabla_{\gamma} s \\
& = \frac{1}{k^2} \delta^{\alpha \beta} \nabla_{\alpha} \Big( \frac{x_{\beta} s'}{r} \Big) - (g^{ij} \Gamma_{ij}^{\gamma} + g^{\alpha \beta} \Gamma_{\alpha \beta}^{\gamma}) \Big( \frac{x_{\gamma} s'}{r} \Big).
\end{align*}
Noting that $\nabla_{\alpha} x_{\beta} = \sv_{\alpha} x_{\beta} = \delta_{\alpha \beta}$, if we expand the first term and use Proposition~\ref{prop:bundle-Gammas} on the second term, we obtain
\begin{align*}
\Delta s & = \frac{1}{k^2} \delta^{\alpha \beta} \Big( \delta_{\alpha \beta} \frac{s'}{r} + \frac{x_{\beta} s''}{r} \frac{x_{\alpha}}{r} - \frac{x_{\beta} s'}{r^2} \frac{x_{\alpha}}{r} \Big) - \Big( - \frac{n h'}{h k^2 r} x_{\gamma} + \frac{(2-m)k'}{k^3 r} x_{\gamma} \Big) \Big( \frac{x_{\gamma} s'}{r} \Big) \\
& = \frac{1}{k^2} \Big( \frac{ms'}{r} + s'' - \frac{s'}{r} \Big) - \Big( - \frac{n h' s'}{h k^2} + \frac{(2-m)k's'}{k^3} \Big),
\end{align*}
which simplifies to~\eqref{eq:general-Laplacian}.
\end{proof}

In order to compute the Laplacian $\Delta X$ for a vector field of the form $X = s(r) \frac{\partial}{\partial r}$, we need some more preliminary results.
\begin{lemma} \label{lemma:Lap-X-prelim}
With respect to the metric~\eqref{eq:general-bundle-metric}, the following relations hold:
$$ \langle \nr \dbydr, \se_i \rangle = 0, \qquad \langle \nr \dbydr, \sv_{\beta} \rangle = \frac{k k'}{r} x_{\beta}. $$
\end{lemma}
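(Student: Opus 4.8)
The plan is to reduce the entire lemma to the single identity
\[
\nr \dbydr = \tfrac{k'}{k}\,\dbydr .
\]
Once this is in hand, the two stated relations follow at once: pairing with $\se_i$ gives $\langle \nr\dbydr,\se_i\rangle = \tfrac{k'}{k}\langle\dbydr,\se_i\rangle = 0$, and pairing with $\sv_\beta$ gives $\langle\nr\dbydr,\sv_\beta\rangle = \tfrac{k'}{k}\langle\dbydr,\sv_\beta\rangle = \tfrac{k'}{k}\cdot\tfrac{k^2}{r}x_\beta = \tfrac{kk'}{r}x_\beta$, where in both cases one uses the inner products $\langle\dbydr,\se_i\rangle = 0$ and $\langle\dbydr,\sv_\beta\rangle = \tfrac{k^2}{r}x_\beta$ recorded just before~\eqref{eq:gradr}.

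To establish the identity I would avoid the mixed Christoffel symbols of the form $\nabla_{\sv_\alpha}\sv_\gamma$ in the horizontal direction (which were not computed in Proposition~\ref{prop:bundle-Gammas}) and instead exploit that $\dbydr$ is a conformal multiple of a gradient. By~\eqref{eq:gradr} we have $\dbydr = k^2 \nabla r$, and combining~\eqref{eq:gradr} with~\eqref{eq:normdbydr} gives $\dbydr(r) = dr(\dbydr) = k^{-2}|\dbydr|^2 = 1$, hence $\dbydr(k^2) = 2kk'$ and $|\nabla r|^2 = k^{-4}|\dbydr|^2 = k^{-2}$. The Leibniz rule, $C^\infty$-linearity of $\nabla$ in its first slot, and the standard identity $\nabla_{\nabla r}\nabla r = \tfrac{1}{2}\nabla|\nabla r|^2$ (symmetry of the Hessian of $r$) then give
\[
\nr \dbydr = \nr(k^2\nabla r) = 2kk'\,\nabla r + k^2\,\nr\nabla r = 2kk'\,\nabla r + k^4\nabla_{\nabla r}\nabla r = 2kk'\,\nabla r + \tfrac{1}{2}k^4\nabla(k^{-2}),
\]
and since $k^{-2}$ is a function of $r$ we have $\nabla(k^{-2}) = -2k^{-3}k'\,\nabla r$, so the right-hand side collapses to $2kk'\nabla r - kk'\nabla r = kk'\nabla r = \tfrac{k'}{k}\dbydr$, as required.

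I do not expect a genuine obstacle here; the only thing to watch is the bookkeeping of the conformal factor $k(r)$ — in particular keeping straight the $k^2$ versus $k^4$ when passing $\dbydr$ through the connection — and noticing that the gradient identity sidesteps any horizontal connection coefficients. As an alternative one could argue directly: expanding $\dbydr = \tfrac{x_\alpha}{r}\sv_\alpha$ with the Leibniz rule, the term differentiating the coefficients sums to zero (since $\tfrac{x_\gamma}{r}\,\sv_\gamma\!\big(\tfrac{x_\alpha}{r}\big) = \tfrac{x_\alpha}{r^2} - \tfrac{x_\alpha}{r^2} = 0$), leaving $\nr\dbydr = \tfrac{x_\alpha x_\gamma}{r^2}\nabla_{\sv_\alpha}\sv_\gamma$; the $\sv_\beta$-component then follows from~\eqref{eq:bundle-Gammas-v} after a short contraction against $x_\alpha x_\gamma$, while the $\se_i$-component needs the separate easy Koszul computation that $\nabla_{\sv_\alpha}\sv_\gamma$ has no horizontal component (equivalently, that the fibres of $E$ are totally geodesic in the metric~\eqref{eq:general-bundle-metric}), using Lemma~\ref{lemma:bundle-Gammas} and the skew-symmetry of the $\omega_{\alpha\beta}$. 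I would present the gradient argument as the main proof and relegate the direct computation, if included at all, to a remark.
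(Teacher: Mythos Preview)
Your gradient argument is correct and is genuinely different from the paper's proof. The paper proves the two relations of the lemma separately and \emph{then} deduces $\nr\dbydr = \tfrac{k'}{k}\dbydr$ as a corollary (Corollary~\ref{cor:nrr}); you reverse this, establishing $\nr\dbydr = \tfrac{k'}{k}\dbydr$ directly via the identity $\nabla_{\nabla r}\nabla r = \tfrac12\nabla|\nabla r|^2$ and reading off both components at once. For the horizontal part the paper instead uses $d(dr)=0$ applied to $(\se_i,\dbydr)$ together with torsion-freeness to show $\langle\nr\dbydr,\se_i\rangle = -\tfrac12\se_i(k^2) - \langle\dbydr,[\dbydr,\se_i]\rangle = 0$, and for the vertical part it expands $\dbydr = \tfrac{x_\alpha}{r}\sv_\alpha$ and contracts against the Christoffel symbols~\eqref{eq:bundle-Gammas-v}. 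Your approach is cleaner and coordinate-free, and it has the side benefit of rendering Corollary~\ref{cor:nrr} redundant; the paper's approach is more hands-on but makes the mechanism (Hessian symmetry in the horizontal direction, explicit Christoffel contraction in the vertical direction) more visible. Your alternative direct computation is essentially the paper's argument for the $\sv_\beta$-component, and your remark that the $\se_i$-component amounts to total geodesy of the fibres is correct: the Koszul formula plus Lemma~\ref{lemma:bundle-Gammas} and the skew-symmetry $\omega_{\alpha\beta} = -\omega_{\beta\alpha}$ give $\langle\nabla_{\sv_\alpha}\sv_\gamma,\se_i\rangle = 0$.
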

\begin{proof}
From $d (dr) = 0$ and~\eqref{eq:gradr}, we get
\begin{align*}
0 & = (d(dr))(\se_i, \dbydr) = \se_i \big( dr(\dbydr) \big) - \dbydr \big( dr (\se_i) \big) - (dr) ( [\se_i, \dbydr] ) \\
& = \se_i (1) - 0 - \tfrac{1}{k^2} \langle \dbydr, [\se_i, \dbydr] \rangle,
\end{align*}
yielding
\begin{equation} \label{eq:lemma:Lap-X-temp}
\langle \dbydr, [ \dbydr, \se_i ] \rangle = 0.
\end{equation}
Now we observe that
\begin{align*}
\langle \nr \dbydr, \se_i \rangle & = \dbydr \big( \langle \dbydr, \se_i \rangle \big) - \langle \dbydr, \nr \se_i \rangle \\
& = 0 - \langle \dbydr, \nabla_{\se_i} \dbydr + [ \dbydr, \se_i ] \rangle \\
& = - \tfrac{1}{2} \se_i (|\dbydr|^2) - \langle \dbydr, [ \dbydr, \se_i ] \rangle.
\end{align*}
The second term vanishes by~\eqref{eq:lemma:Lap-X-temp}, and the first term vanishes by~\eqref{eq:normdbydr} and~\eqref{eq:nabis}, since $\se_i (k^2) = 0$.

Next we compute
\begin{align*}
\langle \nr \dbydr, \sv_{\beta} \rangle & = \left \langle \nabla_{\tfrac{x_{\alpha}}{r} \sv_{\alpha}} \Big( \frac{x_{\gamma}}{r} \sv_{\gamma} \Big), \sv_{\beta} \right \rangle = \frac{x_{\alpha}}{r} \left \langle \nabla_{\sv_{\alpha}} \Big( \frac{x_{\gamma}}{r} \sv_{\gamma} \Big), \sv_{\beta} \right \rangle \\
& = \frac{x_{\alpha}}{r} \left \langle \frac{\delta_{\alpha \gamma}}{r} \sv_{\gamma} - \frac{x_{\gamma}}{r^2} \frac{x_{\alpha}}{r} \sv_{\gamma} + \frac{x_{\gamma}}{r} \nabla_{\sv_{\alpha}} \sv_{\gamma}, \sv_{\beta} \right \rangle \\
& = \left \langle \frac{x_{\gamma}}{r^2} \sv_{\gamma} - \frac{x_{\gamma}}{r^2} \sv_{\gamma} + \frac{x_{\alpha} x_{\gamma}}{r^2} \Gamma_{\alpha \gamma}^{\delta} \sv_{\delta}, \sv_{\beta} \right \rangle \\
& = \frac{k^2 x_{\alpha} x_{\gamma}}{r^2} \Gamma^{\beta}_{\alpha \gamma}.
\end{align*}
Substituting~\eqref{eq:bundle-Gammas-v} above yields
\begin{align*}
\langle \nr \dbydr, \sv_{\beta} \rangle & = \frac{k^2 x_{\alpha} x_{\gamma}}{r^2} \frac{k'}{kr} ( x_{\alpha} \delta_{\beta \gamma} + x_{\gamma} \delta_{\alpha \beta} - x_{\beta} \delta_{\alpha \gamma} ) \\
& = \frac{k k'}{r^3} (r^2 x_{\beta} + r^2 x_{\beta} - r^2 x_{\beta}) = \frac{kk'}{r} x_{\beta}
\end{align*}
as claimed.
\end{proof}

\begin{cor} \label{cor:nrr}
With respect to the metric~\eqref{eq:general-bundle-metric}, we have
$$ \nabla_{\dbydr} \dib{r} = \frac{k'}{k} \dib{r}. $$
\end{cor}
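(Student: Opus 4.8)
The plan is to reconstruct the vector field $\nabla_{\dbydr}\dbydr$ from its inner products against the adapted frame $\{\se_1,\dots,\se_n,\sv_1,\dots,\sv_m\}$, all of which were already computed in Lemma~\ref{lemma:Lap-X-prelim}. Since $\{\se_i,\sv_\alpha\}$ spans $TE$ and the metric~\eqref{eq:bundle-metric-frame} is block-diagonal with the $\se$-block and $\sv$-block each a positive multiple of the identity, knowing $\langle \nabla_{\dbydr}\dbydr,\se_i\rangle$ and $\langle \nabla_{\dbydr}\dbydr,\sv_\beta\rangle$ determines the vector field completely.

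Concretely, I would first note that Lemma~\ref{lemma:Lap-X-prelim} gives $\langle \nabla_{\dbydr}\dbydr,\se_i\rangle = 0$ for all $i$, so by~\eqref{eq:bundle-metric-frame} the horizontal components of $\nabla_{\dbydr}\dbydr$ vanish and we may write $\nabla_{\dbydr}\dbydr = a_\gamma \sv_\gamma$ for some functions $a_\gamma$. Pairing with $\sv_\beta$ and using $g(\sv_\gamma,\sv_\beta) = k^2\delta_{\gamma\beta}$ yields $k^2 a_\beta = \langle \nabla_{\dbydr}\dbydr,\sv_\beta\rangle = \frac{kk'}{r}x_\beta$, hence $a_\beta = \frac{k'}{kr}x_\beta$. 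Therefore
\[
\nabla_{\dbydr}\dbydr = \frac{k'}{kr}\,x_\gamma \sv_\gamma = \frac{k'}{k}\cdot\frac{x_\gamma}{r}\sv_\gamma = \frac{k'}{k}\,\dib{r},
\]
where the last equality is precisely the formula~\eqref{eq:dbydr} for $\dbydr$.

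There is essentially no obstacle here: the corollary is an immediate bookkeeping consequence of Lemma~\ref{lemma:Lap-X-prelim} together with the explicit form of the metric on the frame. The only thing to be careful about is the raising of indices — that is, remembering that the coefficient extracted from the inner product with $\sv_\beta$ carries a factor $k^{-2}$ relative to the component in the $\sv_\gamma$ basis — but this is routine.
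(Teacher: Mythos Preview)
Your argument is correct and is essentially identical to the paper's own proof: expand $\nabla_{\dbydr}\dbydr$ in the frame $\{\se_i,\sv_\alpha\}$, use Lemma~\ref{lemma:Lap-X-prelim} to read off the coefficients, and recognize the result via~\eqref{eq:dbydr}.
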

\begin{proof}
We can write
$$ \nabla_{\dbydr} \dib{r} = c_i \se_i + c_{\alpha} \sv_{\alpha}. $$
Lemma~\ref{lemma:Lap-X-prelim} shows that $c_i = 0$ and $k^2 c_{\alpha} = \frac{k k'}{r} x_{\alpha}$, so $c_{\alpha} = \frac{k'}{kr} x_{\alpha}$. Thus we have
$$ \nabla_{\dbydr} \dib{r} = \frac{k'}{kr} x_{\alpha} \sv_{\alpha} = \frac{k'}{k} \dib{r}. $$
as claimed.
\end{proof}

For future use we note that Corollary~\ref{cor:nrr} implies that
\begin{equation} \label{eq:radialXderiv}
\text{for $X = s(r) \dbydr$, \, we have} \qquad \nr X = \Big( s'+ s \dfrac{k'}{k} \Big) \dib{r}.
\end{equation}

\begin{lemma} \label{lemma:Lap-RF}
If we further assume that the metric~\eqref{eq:general-bundle-metric} is \emph{Ricci-flat}, then we have
$$ \Delta \Big( \frac{1}{k^2} \dib{r} \Big) = \Big( - \frac{2(m-1) k'}{k^5 r} - \frac{(m-1)}{k^4 r^2} + \frac{n h''}{h k^4} - \frac{n (h')^2}{h^2 k^4} - \frac{2 n h' k'}{h k^5} + \frac{(m-2) k''}{k^5} - \frac{3(m-2)(k')^2}{k^6} \Big) \dib{r}. $$
\end{lemma}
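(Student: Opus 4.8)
The plan is to use the observation recorded in \eqref{eq:gradr} that $\frac{1}{k^2}\dib{r} = \nabla r$ is the gradient of the fibrewise distance function $r$, so that the object to be computed is $\Delta(\nabla r)$. For the Laplacian $\Delta = g^{ab}\nabla_a\nabla_b$ and any smooth function $\phi$ on a Riemannian manifold, the Bochner--Weitzenb\"ock identity reads
\[
\Delta(\nabla\phi) = \nabla(\Delta\phi) + \mathrm{Ric}(\nabla\phi,\cdot)^{\sharp},
\]
which one verifies by commuting second covariant derivatives (the Ricci identity) and using the symmetry of the Hessian. Under the standing hypothesis that the metric \eqref{eq:general-bundle-metric} is Ricci-flat, the Ricci term vanishes, leaving the clean identity $\Delta(\nabla r) = \nabla(\Delta r)$ on the region $\{ r > 0 \}$ where $r$ is smooth.

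It then remains only to evaluate the right-hand side, which is routine. First I would apply Corollary~\ref{cor:general-Laplacian} with $s = r$ (so that $s' = 1$ and $s'' = 0$), obtaining
\[
\Delta r = \frac{m-1}{k^2 r} + \frac{n h'}{h k^2} + \frac{(m-2)k'}{k^3},
\]
which is a function of $r$ alone. By \eqref{eq:nablas}, for any $\phi = \phi(r)$ we have $\nabla\phi = \frac{\phi'}{k^2}\dib{r}$, so $\Delta(\nabla r) = \nabla(\Delta r) = \frac{(\Delta r)'}{k^2}\dib{r}$. Differentiating the three summands of $\Delta r$ with respect to $r$ — that is, computing $\frac{d}{dr}\big(k^{-2}r^{-1}\big)$, $\frac{d}{dr}\big(h'h^{-1}k^{-2}\big)$ and $\frac{d}{dr}\big(k'k^{-3}\big)$ — and then multiplying the result by $k^{-2}$ reproduces exactly the seven-term coefficient in the statement.

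The argument has no essential obstacle; the only points requiring care are fixing the sign convention in the Bochner identity so that it is compatible with the Laplacian $\Delta = g^{ab}\nabla_a\nabla_b$ used throughout (equivalently, with the Weitzenb\"ock formula $\Delta_{\mathrm{Hodge}} = -\Delta + \mathrm{Ric}$ on $1$-forms together with $\Delta_{\mathrm{Hodge}}(df) = -d(\Delta f)$ on functions), and the bookkeeping in the final term-by-term differentiation. If one prefers not to invoke the Bochner identity, the same formula can be obtained directly by expanding $\Delta\big(\tfrac{1}{k^2}\dbydr\big) = g^{ab}\nabla_a\nabla_b\big(\tfrac{1}{k^2}\dbydr\big)$ in the frame $\{\se_1,\dots,\se_n,\sv_1,\dots,\sv_m\}$ using Proposition~\ref{prop:bundle-Gammas}, Lemma~\ref{lemma:Lap-X-prelim} and Corollary~\ref{cor:nrr}, at the cost of a substantially longer computation; I expect the Ricci-flat gradient identity to be the shortest route and the one intended here.
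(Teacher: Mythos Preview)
Your proposal is correct and follows essentially the same approach as the paper: the paper also recognizes $\frac{1}{k^2}\dib{r} = \nabla r$, derives the identity $\Delta(\nabla F) = \nabla(\Delta F)$ for Ricci-flat metrics via the Ricci identity (which it notes is equivalent to the Weitzenb\"ock formula on $1$-forms), applies Corollary~\ref{cor:general-Laplacian} with $s=r$, and then takes the gradient using~\eqref{eq:nablas}. The only cosmetic difference is that the paper writes out the Ricci-identity computation explicitly rather than citing the Bochner formula by name.
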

\begin{proof}
Using the Ricci identity, for any function $F$ we have
\begin{align*}
\Delta (\nabla_i F) & = \nabla_p \nabla_p \nabla_i F = \nabla_p \nabla_i \nabla_p F = \nabla_i \nabla_p \nabla_p F - R_{pipm} \nabla_m F \\
& = \nabla_i (\Delta F) + R_{im} \nabla_m F = \nabla_i (\Delta F).
\end{align*}
(Up to the musical isomorphism determined by the metric, the above is just an explicit demonstration of the fact that for a Ricci-flat metric, the Weitzenb\"ock formula shows that the Hodge Laplacian $\Delta_d$ on $1$-forms agrees with the rough Laplacian $\Delta$, and thus $\Delta dF = \Delta_d dF = d \Delta_d F = d \Delta F$.)

Thus, using~\eqref{eq:gradr},~\eqref{eq:general-Laplacian} with $s(r) = r$, and~\eqref{eq:nablas} we compute
\begin{align*}
\Delta \Big( \frac{1}{k^2} \dib{r} \Big) & = \Delta ( \nabla r ) = \nabla (\Delta r) \\
& = \nabla \Big( \frac{m-1}{k^2 r} + \frac{n h'}{h k^2} + \frac{(m-2)k'}{k^3} \Big) \\
& = \frac{1}{k^2} \Big( \frac{m-1}{k^2 r} + \frac{n h'}{h k^2} + \frac{(m-2)k'}{k^3} \Big)' \dib{r}
\end{align*}
Expanding the derivative above yields the result.
\end{proof}

\begin{cor} \label{cor:general-Laplacian-1}
Assume again that the metric~\eqref{eq:general-bundle-metric} is Ricci-flat. Let $X = s(r) \frac{\partial}{\partial r}$. Then its Laplacian $\Delta X = g^{pq} \nabla_p \nabla_q X$ with respect to the metric $g$ of~\eqref{eq:general-bundle-metric} is $\Delta X = $
\begin{equation*}
\Big[ \Big( \frac{1}{k^2} \Big) s'' + \Big( \frac{m-1}{k^2 r} + \frac{n h'}{h k^2} + \frac{mk'}{k^3} \Big) s' + \Big( - \frac{m-1}{k^2 r^2} + \frac{n h''}{h k^2} - \frac{n (h')^2}{h^2 k^2} + \frac{m k''}{k^3} - \frac{m(k')^2}{k^4} \Big) s \Big] \dib{r}.
\end{equation*}
\end{cor}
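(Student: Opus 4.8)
The plan is to exploit Ricci-flatness to reduce the computation of $\Delta X$ to two scalar Laplacian computations, both handled by Corollary~\ref{cor:general-Laplacian}. The key observation is that the vector field $X = s(r) \dbydr$ is a \emph{gradient}: by~\eqref{eq:nablas} (equivalently~\eqref{eq:gradr}), any function $F = F(r)$ of $r$ alone satisfies $\nabla F = \frac{F'}{k^2} \dib{r}$, so if we let $F$ be any antiderivative of $r \mapsto s(r) k(r)^2$, then $\nabla F = s \, \dbydr = X$.

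Next I would invoke the identity $\Delta (\nabla F) = \nabla (\Delta F)$, which holds for Ricci-flat metrics; this is exactly the commutation computed at the start of the proof of Lemma~\ref{lemma:Lap-RF} (the Bochner/Weitzenb\"ock identity on $1$-forms together with $d \Delta = \Delta d$). Thus $\Delta X = \nabla (\Delta F)$, and it remains to compute the function $\Delta F$ and then take its gradient.

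For the first step, apply Corollary~\ref{cor:general-Laplacian} with $s$ replaced by $F$, and substitute $F' = s k^2$, $F'' = s' k^2 + 2 s k k'$. After dividing through, the powers of $k$ cancel and the $k'/k$ terms combine, leaving the clean expression
\begin{equation*}
\Delta F = s' + \Big( \frac{m-1}{r} + \frac{n h'}{h} + \frac{m k'}{k} \Big) s =: G(r).
\end{equation*}
Since $G$ is again a function of $r$ alone, a second use of~\eqref{eq:nablas} gives $\Delta X = \nabla G = \frac{G'}{k^2} \dib{r}$.

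The only remaining work is to differentiate $G$, using $(m-1)/r$ has derivative $-(m-1)/r^2$, $(n h'/h)' = n h''/h - n (h')^2/h^2$, and $(m k'/k)' = m k''/k - m(k')^2/k^2$, and then to divide by $k^2$; this reproduces the three bracketed coefficients of $s''$, $s'$, and $s$ in the stated formula. There is no real obstacle once $X$ is recognized as a gradient, since the Ricci-flat commutation identity does all the heavy lifting and what remains is a short differentiation. One could instead write $X = (s k^2) \nabla r$ and apply the Leibniz rule $\Delta(u Z) = (\Delta u) Z + 2 \nabla_{\nabla u} Z + u \, \Delta Z$ together with Lemma~\ref{lemma:Lap-RF} and Corollary~\ref{cor:nrr}; this gives an independent check but is more laborious, so I would present the gradient argument.
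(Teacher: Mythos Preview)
Your argument is correct and is in fact cleaner than the paper's. The paper takes exactly the alternative route you describe at the end: it writes $X = (s k^2)\cdot (k^{-2}\dbydr)$ and applies the Leibniz rule $\Delta(FY) = (\Delta F)\,Y + 2\,\nabla_{\nabla F} Y + F\,\Delta Y$, computing the three terms separately via Corollary~\ref{cor:general-Laplacian}, Corollary~\ref{cor:nrr}, and Lemma~\ref{lemma:Lap-RF}, and then collecting everything. Your observation that $X$ itself is already a gradient (not merely a function times $\nabla r$) lets you apply the Ricci-flat commutation $\Delta\nabla = \nabla\Delta$ directly to $X$, reducing the whole thing to a single application of Corollary~\ref{cor:general-Laplacian} followed by one differentiation; this is precisely the trick the paper uses in Lemma~\ref{lemma:Lap-RF} for the special case $F=r$, but you have recognized that it works for general $F(r)$ and hence makes Lemma~\ref{lemma:Lap-RF} and the Leibniz computation unnecessary here. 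The paper's route has the minor advantage of not requiring an antiderivative (so it is manifestly local in $s$), but yours is substantially shorter and more transparent.
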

\begin{proof}
We use the same notation as in the proof of Corollary~\ref{cor:general-Laplacian}. For any vector field $X$, we have
\begin{equation*}
\Delta X = g^{ab} \nabla_a (\nabla_b X) - g^{ab} \Gamma^c_{ab} \nabla_c X.
\end{equation*}
If $X = F Y$ for some function $F$ and vector field $Y$, then from the above we get the well-known formula
$$ \Delta (F Y) = (\Delta F) Y + 2 g^{ab} (\nabla_a F) (\nabla_b Y) + F (\Delta Y). $$
Take $F = s k^2$ and $Y = k^{-2} \dbydr$, so that $X = F Y = s \dbydr$. Then we have
\begin{equation} \label{eq:general-Lap-1-temp}
\Delta X = \big( \Delta (s k^2) \big) \frac{1}{k^2} \dib{r} + 2 g^{ab} \nabla_a (s k^2) \nabla_b \Big( \frac{1}{k^2} \dib{r} \Big) + (sk^2) \Delta \Big( \frac{1}{k^2} \dib{r} \Big).
\end{equation}
We compute the three terms above one at a time.

Using~\eqref{eq:general-Laplacian} with $s$ replaced by $sk^2$ gives
\begin{align*}
\Delta (s k^2) & = \frac{1}{k^2} (sk^2)'' + \Big( \frac{m-1}{k^2 r} + \frac{n h'}{h k^2} + \frac{(m-2)k'}{k^3} \Big) (sk^2)' \\
& = \frac{1}{k^2} (s'' k^2 + 4 s' k k' + 2 s (k')^2 + 2 s k k'') + \Big( \frac{m-1}{k^2 r} + \frac{n h'}{h k^2} + \frac{(m-2)k'}{k^3} \Big) (s' k^2 + 2 s k k') \\
& = s'' + \Big( \frac{m-1}{r} + \frac{n h'}{h} + \frac{(m+2)k'}{k} \Big) s' + \Big( \frac{2(m-1)k'}{kr} + \frac{2n h' k'}{hk} + \frac{2(m-1) (k')^2}{k^2} + \frac{2 k''}{k} \Big) s,
\end{align*}
so we get
\begin{equation} \label{eq:general-Lap-1-temp2}
\begin{aligned}
\big( \Delta (s k^2) \big) \frac{1}{k^2} \dib{r} & = \frac{1}{k^2} s'' \dib{r} + \Big( \frac{m-1}{k^2 r} + \frac{n h'}{h k^2} + \frac{(m+2)k'}{k^3} \Big) s' \dib{r} \\
& \qquad {} + \Big( \frac{2(m-1)k'}{k^3r} + \frac{2n h' k'}{hk^3} + \frac{2(m-1) (k')^2}{k^4} + \frac{2 k''}{k^3} \Big) s \dib{r}.
\end{aligned}
\end{equation}
Since $F = s k^2$ is a function of $r$, we have $\nabla_i F = 0$ by~\eqref{eq:nabis}, and thus
\begin{align*}
2 g^{ab} \nabla_a (s k^2) \nabla_b \Big( \frac{1}{k^2} \dib{r} \Big) & = 2 g^{\alpha \beta} \nabla_{\alpha} (s k^2) \nabla_{\beta} \Big( \frac{1}{k^2} \dib{r} \Big) \\
& = 2 k^{-2} \nabla_{\alpha} (s k^2) \nabla_{\alpha} \Big( \frac{1}{k^2} \dib{r} \Big) \\
& = 2 k^{-2} (k^2 \nabla_{\alpha} s + 2 s k \nabla_{\alpha} k) \Big( - 2 k^{-3} (\nabla_{\alpha} k) \dib{r} + k^{-2} \nabla_{\alpha} \dib{r} \Big),
\end{align*}
which, using~\eqref{eq:nabalphas}, further simplifies to
\begin{align*}
2 g^{ab} \nabla_a (s k^2) \nabla_b \Big( \frac{1}{k^2} \dib{r} \Big) & = 2 k^{-2} \Big( \frac{k^2 x_{\alpha} s'}{r} + \frac{2 s k x_{\alpha} k'}{r} \Big) \Big( \frac{- 2 x_{\alpha} k'}{k^3 r} \dib{r} + \frac{1}{k^2} \nabla_{\alpha} \dib{r} \Big) \\
& = - \frac{4 k' s'}{k^3} \dib{r} - \frac{8 (k')^2 s}{k^4} \dib{r} + \Big( \frac{2 s'}{k^2} + \frac{4 k' s}{k^3} \Big) \frac{x_{\alpha}}{r} \nabla_{\alpha} \dib{r}.
\end{align*}
From the fact that $\dbydr = \frac{x_{\alpha}}{r} \sv_{\alpha}$ and Corollary~\ref{cor:nrr}, we have $\frac{x_{\alpha}}{r} \nabla_{\alpha} \dbydr = \frac{k'}{k} \dbydr$, so after collecting terms the above expression becomes
\begin{equation} \label{eq:general-Lap-1-temp3}
2 g^{ab} \nabla_a (s k^2) \nabla_b \Big( \frac{1}{k^2} \dib{r} \Big) = - \Big( \frac{2 k'}{k^3} \Big) s' \dib{r} - \Big( \frac{4 (k')^2}{k^4} \Big) s \dib{r}.
\end{equation}
Finally, from Lemma~\ref{lemma:Lap-RF}, we have
\begin{equation} \label{eq:general-Lap-1-temp4}
\begin{aligned}
& \qquad (sk^2) \Delta \Big( \frac{1}{k^2} \dib{r} \Big) \\
& = \Big( - \frac{2(m-1) k'}{k^3 r} - \frac{m-1}{k^2 r^2} + \frac{n h''}{h k^2} - \frac{n (h')^2}{h^2 k^2} - \frac{2 n h' k'}{h k^3} + \frac{(m-2) k''}{k^3} - \frac{3(m-2)(k')^2}{k^4} \Big) s \dib{r}.
\end{aligned}
\end{equation}
Substituting equations~\eqref{eq:general-Lap-1-temp2},~\eqref{eq:general-Lap-1-temp3}, and~\eqref{eq:general-Lap-1-temp4} into~\eqref{eq:general-Lap-1-temp} and collecting terms, after some cancellation we obtain the result.
\end{proof}

\begin{cor} \label{cor:general-normsq-nablaX}
Assume again that the metric~\eqref{eq:general-bundle-metric} is Ricci-flat. Let $X = s(r) \frac{\partial}{\partial r}$. Then the quantity $| \nabla X |^2$ with respect to the metric $g$ of~\eqref{eq:general-bundle-metric} is
\begin{align*}
| \nabla X |^2 & = - \frac{(m - 1) s^2 k''}{k} + (s')^2 + \frac{2 s s' k'}{k} + \frac{(2m - 1) s^2 (k')^2}{k^2} + \frac{(m-1) s^2 k'}{kr} \\
& \qquad {} + \frac{n s^2 h' k'}{h k} + \frac{(m-1)s^2}{r^2} - \frac{n s^2 h''}{h} + \frac{n s^2 (h')^2}{h^2}.
\end{align*}
\end{cor}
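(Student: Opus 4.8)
The plan is to avoid computing $\nabla_{\se_i}X$ and $\nabla_{\sv_\alpha}X$ directly, and instead use the pointwise identity
\[
|\nabla X|^2 = \tfrac12 \Delta |X|^2 - \langle \Delta X, X \rangle ,
\]
valid for every vector field $X$ and the analyst's Laplacian $\Delta = g^{pq}\nabla_p\nabla_q$. This identity is immediate from the product rule and metric compatibility, namely $\tfrac12\Delta\langle X,X\rangle = g^{pq}\nabla_p\langle\nabla_q X,X\rangle = g^{pq}\langle\nabla_p\nabla_q X,X\rangle + g^{pq}\langle\nabla_p X,\nabla_q X\rangle$, and it requires no curvature hypothesis; the Ricci-flatness assumption enters only through the inputs below. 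Both terms on the right-hand side are already available from this section: by~\eqref{eq:normdbydr} we have $|X|^2 = s^2\,|\tfrac{\partial}{\partial r}|^2 = s^2 k^2$, which is a function of $r$ alone, so Corollary~\ref{cor:general-Laplacian} applies with $s$ replaced by $s^2 k^2$; and by Corollary~\ref{cor:general-Laplacian-1} the vector field $\Delta X$ is a scalar multiple of $\tfrac{\partial}{\partial r}$, say $\Delta X = P(r)\,\dib{r}$ with $P$ the bracketed expression there, whence $\langle\Delta X,X\rangle = s\,k^2\,P(r)$ since $X = s\,\dib{r}$ and $|\dib{r}|^2 = k^2$.

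First I would set $G := s^2 k^2$, compute $G' = 2ss'k^2 + 2s^2kk'$ and $G'' = 2(s')^2k^2 + 2ss''k^2 + 8ss'kk' + 2s^2(k')^2 + 2s^2kk''$, and substitute into the formula of Corollary~\ref{cor:general-Laplacian} to get an explicit expression for $\tfrac12\Delta(s^2k^2)$ in terms of $s,k,h$ and their derivatives. Next I would write out $\langle\Delta X,X\rangle = sk^2 P(r)$ explicitly using Corollary~\ref{cor:general-Laplacian-1}. Then I would subtract and collect terms. The bookkeeping is routine: the $ss''$ contributions cancel, as do all the $ss'/r$ and $h'ss'/h$ terms; the $ss'k'/k$ terms combine with net coefficient $4+(m-2)-m=2$, the $s^2(k')^2/k^2$ terms with net coefficient $1+(m-2)+m=2m-1$, and the $s^2k''/k$ terms with net coefficient $1-m$, while the surviving $s^2/r^2$, $s^2h''/h$, $s^2(h')^2/h^2$, $s^2h'k'/(hk)$, and $s^2k'/(kr)$ terms each acquire exactly the coefficient in the statement. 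This produces the claimed formula.

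I expect the only potential difficulty to be purely arithmetical, namely the careful combination of the several rational expressions in $h,k$ and their first and second derivatives; there is no conceptual obstacle. The key simplifying observation is precisely that $\langle\Delta X,X\rangle$ is trivial to record because $\Delta X$ is parallel to $\dib{r}$ by Corollary~\ref{cor:general-Laplacian-1}. (A more pedestrian alternative would compute $\nabla_{\se_i}X$ and $\nabla_{\sv_\alpha}X$ frame-component by frame-component, but this needs $\nabla_{\sv_\alpha}\dib{r}$ for all $\alpha$, which is only partially recorded in Lemma~\ref{lemma:Lap-X-prelim} and Corollary~\ref{cor:nrr}; the identity above sidesteps that entirely.)
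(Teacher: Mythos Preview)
Your proposal is correct and follows essentially the same route as the paper: the paper also derives the identity $|\nabla X|^2 = \tfrac{1}{2}\Delta(|X|^2) - \langle X,\Delta X\rangle$, then computes $\Delta(s^2k^2)$ via Corollary~\ref{cor:general-Laplacian} and $\langle X,\Delta X\rangle$ via Corollary~\ref{cor:general-Laplacian-1}, and subtracts. Your coefficient bookkeeping matches the paper's exactly.
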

\begin{proof}
With respect to a local orthonormal frame we have
\begin{align} \nonumber
| \nabla X |^2 & = \nabla_i X_j \nabla_i X_j = \nabla_i (X_j \nabla_i X_j) - X_j \nabla_i \nabla_i X_j \\ \nonumber
& = \nabla_i \big( \tfrac{1}{2} \nabla_i (X_j X_j) \big) - X_j (\Delta X)_j \\ \label{eq:general-normsq-nablaX-1}
& = \tfrac{1}{2} \Delta (|X|^2) - \langle X, \Delta X \rangle.
\end{align}
For $X = s(r) \frac{\partial}{\partial r}$, we get from~\eqref{eq:normdbydr} that $|X|^2 = s^2 k^2$, and thus Corollary~\ref{cor:general-Laplacian} gives
\begin{align*}
\Delta (|X|^2) & = \Delta (s^2 k^2) \\
& = \frac{1}{k^2} (s^2 k^2)'' + \Big( \frac{m-1}{k^2 r} + \frac{n h'}{h k^2} + \frac{(m-2)k'}{k^3} \Big) (s^2 k^2)' \\
& = \frac{1}{k^2} (2 s s'' k^2 + 2 (s')^2 k^2 + 8 s s' k k' + 2 s^2 (k')^2 + 2 s^2 k k'') \\
& \qquad {} + \Big( \frac{m-1}{k^2 r} + \frac{n h'}{h k^2} + \frac{(m-2)k'}{k^3} \Big) (2 s s' k^2 + 2 s^2 k k'),
\end{align*}
which simplifies further to
\begin{align} \nonumber
\Delta (|X|^2) & = 2 s s'' + 2 (s')^2 + \frac{8 s s' k'}{k} + \frac{2 s^2 (k')^2}{k^2} + \frac{2 s^2 k''}{k} \\ \nonumber
& \qquad {} + \frac{2(m-1) s s'}{r} + \frac{2 n h' s s'}{h} + \frac{2(m-2) k' s s'}{k} \\ \label{eq:Delta-normsqX}
& \qquad {} + \frac{2(m-1) s^2 k'}{kr} + \frac{2 n h' s^2 k'}{h k} + \frac{2(m-2) s^2 (k')^2}{k^2}.
\end{align}
Using Corollary~\ref{cor:general-Laplacian-1} and~\eqref{eq:normdbydr} we also have
\begin{align} \nonumber
& \qquad \langle X, \Delta X \rangle = \Big \langle s \dib{r}, \Delta X \Big \rangle \\ \nonumber
& = s k^2 \Big[ \Big( \frac{1}{k^2} \Big) s'' + \Big( \frac{m-1}{k^2 r} + \frac{n h'}{h k^2} + \frac{mk'}{k^3} \Big) s' + \Big( - \frac{m-1}{k^2 r^2} + \frac{n h''}{h k^2} - \frac{n (h')^2}{h^2 k^2} + \frac{m k''}{k^3} - \frac{m(k')^2}{k^4} \Big) s \Big] \\ \nonumber
& = s s'' + \frac{(m-1) s s'}{r} + \frac{n h' s s'}{h} + \frac{m k' s s'}{k} \\ \label{eq:general-normsq-nablaX-2}
& \qquad {} - \frac{(m-1)s^2}{r^2} + \frac{n h'' s^2}{h} - \frac{n (h')^2 s^2}{h^2} + \frac{m s^2 k''}{k} - \frac{m s^2 (k')^2}{k^2}.
\end{align}
Substituting equations~\eqref{eq:Delta-normsqX} and~\eqref{eq:general-normsq-nablaX-2} into~\eqref{eq:general-normsq-nablaX-1} and collecting terms, after some cancellation we obtain the result.
\end{proof}

\section{Isometric solitons on Euclidean $\R^7$} \label{R7sec}

In this section we consider the case $M = \R^7$ such that the initial $\phir$ is the standard torsion-free $\G$-structure inducing the Euclidean metric $g$, and look for radially symmetric solutions of the isometric flow soliton equation~\eqref{solitoncond}.

\subsection{Derivation of the radially symmetric soliton equation on $\R^7$} \label{R7derivationsec}

We take the usual (global) Euclidean coordinates $x^1, \ldots, x^7$ on $\R^7$, in terms of which the unit radial vector field $\frac{\partial}{\partial r}$ is related to the Euler vector field $\vR$ by
$$\vR = x^i \dib{x^i} = r \dib{r}.$$

\begin{lemma} \label{lemma:R7Y}
Let $(\varphi, Y, c)$ be a soliton for the isometric flow on $\R^7$ with the Euclidean metric $g$. Then $Y = c \vR + Y_0$, where $Y_0$ is a Killing vector field on $(\R^7, g)$.
\end{lemma}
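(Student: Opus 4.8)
The plan is to use only the first soliton equation $\mathcal{L}_Y g = 2c\,g$ from~\eqref{solitoncond}; the divergence equation plays no role here. The key observation is that the Euler vector field $\vR = x^i \frac{\partial}{\partial x^i} = r\frac{\partial}{\partial r}$ is itself a homothety of the Euclidean metric: in the global Cartesian coframe $\{dx^1,\dots,dx^7\}$ one has $(\mathcal{L}_{\vR}g)_{ij} = \partial_i \vR_j + \partial_j \vR_i = \partial_i x_j + \partial_j x_i = 2\delta_{ij}$, i.e.\ $\mathcal{L}_{\vR}g = 2g$. (Equivalently, the flow of $\vR$ is $x \mapsto e^t x$, which pulls $g$ back to $e^{2t}g$, so $\mathcal{L}_{\vR}g = \frac{d}{dt}\big|_{t=0} e^{2t} g = 2g$.)

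Given this, I would simply set $Y_0 := Y - c\vR$ and compute, using $\R$-linearity of the Lie derivative together with the hypothesis $\mathcal{L}_Y g = 2c\,g$,
\[
\mathcal{L}_{Y_0} g = \mathcal{L}_Y g - c\,\mathcal{L}_{\vR} g = 2c\,g - 2c\,g = 0,
\]
so that $Y_0$ is a Killing vector field on $(\R^7,g)$. Since $\vR = r\frac{\partial}{\partial r}$, this is exactly the asserted decomposition $Y = c\vR + Y_0$ with $Y_0$ Killing.

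There is essentially no obstacle: the content of the lemma is the recognition that any vector field satisfying $\mathcal{L}_Y g = 2c\,g$ differs from the explicit homothety $c\vR$ by an infinitesimal isometry. One could alternatively route this through the general computation of $\mathcal{L}_Y g$ for the cohomogeneity one metrics in Lemma~\ref{lemma:LYg}, but since $Y$ here is an arbitrary vector field (not assumed to be of the form $b\frac{\partial}{\partial r}$), the direct two-line argument above is cleaner. If a fully explicit description of $Y_0$ were wanted, one would then invoke the classical classification of Killing fields of Euclidean $\R^7$ (constant translations together with $\mathfrak{so}(7)$-rotations), but that is not needed for the statement as given.
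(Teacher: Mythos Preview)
Your argument is correct. Both you and the paper use only the first soliton equation $\mathcal{L}_Y g = 2c\,g$, but the routes differ slightly: the paper writes out the equation in Cartesian coordinates as $\partial_i Y_j + \partial_j Y_i = 2c\,\delta_{ij}$ and solves it completely, obtaining $Y_i = c x_i + a_{ij} x_j + b_i$ with $a_{ij}$ skew, so that $Y_0$ is explicitly a rigid motion generator. You instead observe that $c\vR$ is a particular solution and subtract it off, reducing immediately to the Killing equation. Your approach is cleaner for the lemma as stated; the paper's has the mild advantage of producing the explicit form of $Y_0$, which is then used immediately afterward to argue that no nonzero radially symmetric Killing field exists and hence $Y_0 = 0$ under the $\mathrm{SO}(7)$-invariance assumption.
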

\begin{proof}
In terms of the coordinates $x^1, \ldots, x^7$, the equation $\mathcal L_Y g = 2 c g$ becomes $\frac{\partial}{\partial x^i} Y_j + \frac{\partial}{\partial x^j} Y_i = 2 c \, \delta_{ij}$. It is easy to verify that the solutions are $Y_i = c x^i + a_{ij} x^j + b_i$ where $a_{ij}$ is skew-symmetric. Thus $Y_0 = a_{ij} x^j \frac{\partial}{\partial x^i} + b_i \frac{\partial}{\partial x^i}$ generates a rigid motion of $(\R^7, g)$.
\end{proof}

The standard action of $\mathrm{SO}(7)$ on $\R^7$ acts by cohomogeneity one. The principal orbits are the spheres $\{ r = r_0 \}$ for $r_0 > 0$, where $r^2 = (x^1)^2 + \cdots + (x^7)^2$. We look for solutions to the isometric soliton system~\eqref{solitoncond} which are invariant under this action. More precisely, we want the data $(f, X, Y, c)$ to be $\mathrm{SO}(7)$-invariant. Since the space of $\mathrm{SO}(7)$-invariant vector fields is spanned by $\frac{\partial}{\partial r}$, we must have
\begin{equation} \label{radialXform}
X = a(r)\dib{r}
\end{equation}
for some smooth function $a$. Since $\vR = r \frac{\partial}{\partial r}$ is smooth at the origin, for $X = a(r) \frac{\partial}{\partial r}$ to be smooth at the origin, we need $a(r)/r$ to be a smooth even function of $r$, so $a$ must be a smooth odd function of $r$. Since $\frac{\partial}{\partial r}$ is a unit vector field, we have $|X|^2 = a^2$, and thus $f^2 + a^2 = 1$.

Similarly we must have $Y = b(r) \frac{\partial}{\partial r}$, but then Lemma~\ref{lemma:R7Y} forces $Y_0 = 0$ and $Y = c \vR$, because there are no non-zero radially symmetric Killing fields in Euclidean space. Since $Y = c \vR$, we have $Y = c x^i \frac{\partial}{\partial x^i}$, so $\nabla_i Y_j = c \delta_{ij}$ is symmetric, and $(\curl_{\phii} Y)_m = \nabla_i Y_j \phii_{ijm} = 0$.

Hence we have satisfied the first equation in~\eqref{solitoncond} with $Y = c \vR$, and since $\curl_{\phii} Y = 0$ we have reduced the second equation in~\eqref{solitoncond} to
\begin{equation} \label{R7Tcond}
\tdiv T =c\, r \dib{r} \hk T.
\end{equation}
As we will see, using the expressions~\eqref{simpleT} and~\eqref{simpledivT} for the torsion and its divergence, the equation~\eqref{R7Tcond} simplifies to a second-order ODE for $a$ and $f$, subject to the constraint that $a^2+f^2=1$.

\begin{prop} \label{R7ODE}
The soliton equation~\eqref{R7Tcond} is equivalent to the second order nonlinear ODE
\begin{equation} \label{hradialOdeFirst}
\dfrac1{r}(a f'' - a'' f) +\dfrac{6}{r^2}(a f' - a' f)+ \dfrac{6}{r^3} a f = -c \dfrac{a'}{f}.
\end{equation}
\end{prop}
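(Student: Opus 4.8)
The plan is to substitute the radially symmetric ansatz $X = a(r)\dbydr$, $f = f(r)$ (with $a^2 + f^2 = 1$, and $Y = c\vR = cr\dbydr$ by Lemma~\ref{lemma:R7Y}) into the reduction formulas of Section~\ref{Laplacian-sec} and read off the ODE. The preliminary observation is that $(\R^7, g)$ is an instance of the framework of Section~\ref{Laplacian-sec}: it is the total space of the trivial rank-$7$ vector bundle over a point, so $n = 0$, $m = 7$, the warping function $h$ plays no role, and $k \equiv 1$ (equivalently, $\R^7 \setminus \{0\}$ is the metric cone over the round $S^6$, with $n = 6$, $m = 1$, $h = r$, $k \equiv 1$; both choices produce the same operators). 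The metric is flat, hence Ricci-flat, so the Ricci-flat formulas of Section~\ref{Laplacian-sec} apply. With these parameters, Corollary~\ref{cor:general-Laplacian} gives $\Delta f = f'' + \tfrac{6}{r}f'$, Corollary~\ref{cor:general-Laplacian-1} gives $\Delta X = \big(a'' + \tfrac{6}{r}a' - \tfrac{6}{r^2}a\big)\dbydr$, and \eqref{eq:radialXderiv} gives $\nr X = a'\dbydr$, so that $\nabla_Y X = cr a'\dbydr$.

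Next I would evaluate both sides of \eqref{R7Tcond}. Since $X$, $\nabla_Y X$, and $\Delta X$ are all multiples of $\dbydr$, both parts of Corollary~\ref{cor:T-phidropout} apply: part (a) yields
$$ Y \hk T = 2(Yf)X - 2f\,\nabla_Y X = 2cr\big(af' - a'f\big)\dbydr, $$
and part (b), after substituting the two Laplacians above, yields
$$ \tdiv T = 2(\Delta f)X - 2f\,\Delta X = 2\Big[af'' - a''f + \tfrac{6}{r}\big(af' - a'f\big) + \tfrac{6}{r^2}af\Big]\dbydr. $$
The only genuinely structural input here, beyond bookkeeping, is that the $\phir$-dependent terms of \eqref{simpleT} and \eqref{simpledivT} drop out by skew-symmetry of $\phir$, which is exactly the content of Corollary~\ref{cor:T-phidropout}.

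Comparing $\dbydr$-components and dividing by $2r$, equation \eqref{R7Tcond} becomes
$$ \tfrac{1}{r}\big(af'' - a''f\big) + \tfrac{6}{r^2}\big(af' - a'f\big) + \tfrac{6}{r^3}af = c\big(af' - a'f\big). $$
To finish, I would invoke the constraint: differentiating $a^2 + f^2 = 1$ gives $aa' + ff' = 0$, hence $af' - a'f = -a'(a^2 + f^2)/f = -a'/f$ wherever $f \neq 0$; substituting this on the right-hand side produces exactly \eqref{hradialOdeFirst}. As every step is an equivalence, the ODE conversely implies \eqref{R7Tcond}, giving the asserted equivalence.

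I do not expect any real obstacle: once the formulas of Section~\ref{Laplacian-sec} are in hand, this is a short and essentially mechanical computation. The two points requiring care are identifying the correct specialization of the general framework to $\R^7$ (so that the Laplacian formulas can be quoted verbatim), and deploying the constraint $a^2 + f^2 = 1$ in the precise form $af' - a'f = -a'/f$ so that the right-hand side matches \eqref{hradialOdeFirst} exactly; one could equally well record the more symmetric intermediate equation displayed above, which holds for all $r$.
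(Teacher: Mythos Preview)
Your proof is correct and follows essentially the same route as the paper: specialize the bundle-metric formulas of Section~\ref{Laplacian-sec} (the paper uses $n=6$, $m=1$, $h=r$, $k=1$, while you also note the equivalent choice $n=0$, $m=7$, $k=1$), invoke Corollary~\ref{cor:T-phidropout} to drop the $\phir$-terms from both $\tdiv T$ and $Y\hk T$, and then use the constraint identity $af'-a'f=-a'/f$ (the paper's~\eqref{firstfhrule}) to put the right-hand side in the stated form. The only cosmetic difference is that the paper applies~\eqref{firstfhrule} to $\dbydr\hk T$ before substituting into~\eqref{R7Tcond}, whereas you carry the symmetric expression $c(af'-a'f)$ one step further and simplify at the end.
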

Before proceeding with the proof, we make some observations. Note that the condition that $a$ be a smooth odd function of $r$, and $f^2=1-a^2$, imply that each
side of~\eqref{hradialOdeFirst} is smooth near $r=0$. To emphasize that this is actually an equation for one unknown function, we note that we can eliminate the derivatives of $f$ by differentiating $f^2 + a^2=1$ to get
\begin{equation} \label{fprime}
f' = -\dfrac{a a'}{f}.
\end{equation}
It follows that 
\begin{equation} \label{firstfhrule}
a f' - a' f =-\dfrac{(a^2 + f^2) a'}f = -\dfrac{a'}f
\end{equation}
and by differentiating again we get
\begin{equation} \label{secondfhrule}
a f'' - a'' f = - \dfrac{a''}f + \dfrac{a' f'}{f^2} = - \dfrac{a''}f - \dfrac{a (a')^2}{f^3}.
\end{equation}
These allow us to rewrite~\eqref{hradialOdeFirst}, after cancelling factors of $-1$ and multiplying by $fr$, as 
\begin{equation} \label{hradialOdeSecond}
a'' + \dfrac{a}{f^2} (a')^2 + (6-c r^2) \dfrac{a'}{r} = 6 \dfrac{f^2}{r^2} a.
\end{equation}

\begin{proof}[Proof of Proposition~\ref{R7ODE}]
The Euclidean metric on $\R^7 \setminus \{ 0 \}$ can be expressed as a warped product
$$g = r^2 \gN + (dr)^2,$$
where $\gN$ is the constant-curvature one metric on $S^6$. Thus, $g$ has the form~\eqref{eq:general-bundle-metric} of the bundle metrics discussed in Section~\ref{Laplacian-sec} in the special case where
$$ n=6, \qquad m=1, \qquad h(r) = r, \qquad k(r) = 1. $$
We use the computations of Section~\ref{Laplacian-sec} in what follows.

Because~\eqref{eq:radialXderiv} shows that $\nr X$ is a multiple of $X$, by Corollary~\ref{cor:T-phidropout} we are left with
\begin{equation} \label{eq:radialhookTnophi}
\dib{r} \hk T = 2 (\nr f) X^{\flat} - 2 f \nr X^{\flat}.
\end{equation}
Using~\eqref{eq:radialXderiv} with $k=1$ and~\eqref{firstfhrule} we get
\begin{equation} \label{eq:radialhookTflat}
\dib{r} \hk T = 2 (f' - f \dfrac{a'}{a}) X^{\flat} = 2(a f' - a' f ) dr = -2\dfrac{a'}{f} dr.
\end{equation}
Next, specializing Corollaries~\ref{cor:general-Laplacian} and~\ref{cor:general-Laplacian-1} to this case gives
\begin{equation} \label{flatlaplacians}
\Delta f = f'' + \dfrac{6}r f', \qquad \Delta X = \left( a'' + \dfrac6{r} a' - \dfrac6{r^2} a \right) \dib{r}.
\end{equation}
Since $\Delta X$ is a multiple of $X$, by Corollary~\ref{cor:T-phidropout} we have
\begin{align}
\tdiv T & = 2(\Delta f) X^{\flat} - 2 f \Delta X^{\flat} \notag \\
& = 2 \left( \big( f'' + \dfrac{6}{r} f' \Big) a - f \Big( a'' + \dfrac{6}{r} a' - \dfrac{6}{r^2} a \Big) \right) dr \notag \\
& = 2 \left( (a f'' - a'' f) + \dfrac{6}{r} ( a f' - a'f) + \dfrac{6}{r^2} a f \right) dr. \label{simplerdivT}
\end{align}
Substituting this expression and~\eqref{eq:radialhookTflat} into~\eqref{R7Tcond}, and dividing out by $2r$, yields the ODE~\eqref{hradialOdeFirst}.
\end{proof}

For later use, we compute the square norm of the torsion tensor. In this special case Corollary~\ref{cor:general-normsq-nablaX} gives
$$ |\nabla X|^2 = (a')^2 + 6 \dfrac{a^2}{r^2}. $$
Hence from~\eqref{normT} and~\eqref{fprime} we have
\begin{equation} \label{normTradial}
\tfrac{1}{4} |T|^2 = |\nabla X|^2 +|\nabla f|^2 = (a')^2 +6 \dfrac{a^2}{r^2} + (f')^2 = \dfrac{(a')^2}{f^2} + 6 \dfrac{a^2}{r^2}.
\end{equation}

\subsection{Analysis of the radially symmetric soliton equation on $\R^7$} \label{analysisR7sec}

Since $f^2 + a^2 = 1$, we can simplify the ODE~\eqref{hradialOdeSecond} so that it involves only a single unknown function by making the substitutions
\begin{equation} \label{trigsub}
a(r) = \sin\big( \tfrac{1}{2} u(r) \big) \quad \text{and} \quad f(r) = \cos\big( \tfrac{1}{2} u(r) \big),
\end{equation}
whereupon the ODE becomes 
\begin{equation} \label{firstradialODE}
r^{2} u'' + (6 - c r^{2}) r u' -6 \sin u=0.
\end{equation}

Recall that for $X$ to be smooth at the origin, we require that $a(r)$ be a smooth odd function of $r$, which is equivalent to $u(r)$ being a smooth odd function of $r$. In Section~\ref{ODEsec} we show that~\eqref{firstradialODE} has $r=0$ as a regular singular point, and we prove that, for any value of $c$,~\eqref{firstradialODE} admits a one-parameter family of analytic solutions $u(r)$ which are odd functions of $r$ defined for all real values of $r$. (The free parameter is $u'(0)$. If this is zero, then the solution $u(r)$ is identically zero, and the soliton is trivial.) It follows that there exist global radially symmetric expanding, steady, and shrinking solitons.

We have thus established the following:
\begin{thm} \label{thm:R7final}
For any $c \in \R$, there exists a one-parameter family of global radially symmetric solitons for the isometric flow on the Euclidean $\R^7$. That is, there is a unique global solution to the ODE~\eqref{hradialOdeSecond} satisfying the initial conditions $a(0) = 0$ and $a'(0) \neq 0$.
\end{thm}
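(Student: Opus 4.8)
The plan is to deduce Theorem~\ref{thm:R7final} from a careful study of the single ODE~\eqref{firstradialODE}, namely $r^{2}u'' + (6 - cr^{2})r u' - 6\sin u = 0$, obtained from~\eqref{hradialOdeSecond} through the substitution~\eqref{trigsub}, and to carry out that study in Section~\ref{ODEsec}. The point $r=0$ is a regular singular point: linearizing about the trivial solution $u\equiv 0$ gives the Euler equation $r^{2}u'' + 6r u' - 6u = 0$ (the term $-cr^{2}\cdot r u'$ and all nonlinear terms being of higher order), whose indicial polynomial is $s^{2}+5s-6=(s-1)(s+6)$, with roots $s=1$ and $s=-6$. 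Since the two roots differ by the integer $7$ and the larger one is $s=1$, the standard Frobenius theory guarantees that the solution branch attached to $s=1$ is an honest analytic family, carrying no logarithmic term; and this is exactly the branch we need, since $X=a(r)\dbydr$ is smooth at the origin precisely when $a$, equivalently $u$, is a smooth odd function of $r$ with $u(0)=0$.

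\textbf{Local analysis near $r=0$.} I would seek a formal power series solution $u(r)=\sum_{k\ge 1}a_{k}r^{k}$. Substituting into~\eqref{firstradialODE} and matching coefficients of $r^{k}$ produces a recursion of the form $(k-1)(k+6)\,a_{k} = c(k-2)a_{k-2} + Q_{k}(a_{1},\dots,a_{k-2})$, where $Q_{k}$ is a polynomial coming from the Taylor expansion of $\sin$. Because $(k-1)(k+6)\neq 0$ for all $k\ge 2$, the coefficient $a_{1}=u'(0)$ is free and determines all higher $a_{k}$ uniquely; one checks directly that $a_{2}=0$ and $a_{3}=\tfrac{1}{18}(ca_{1}-a_{1}^{3})$, and, since $\sin$ is odd and the recursion couples $a_{k}$ only to coefficients of the same parity, that every even coefficient vanishes, so the formal solution is automatically odd. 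Convergence of this series is the one genuinely technical point; I would establish it by a majorant (Cauchy) estimate, bounding $|Q_{k}|$ by the corresponding coefficients of an explicit comparison series and using the uniform lower bound $(k-1)(k+6)\ge 8$. This yields, for each $a_{1}\in\R$, a unique solution of~\eqref{firstradialODE} analytic and odd near $r=0$, which is identically zero exactly when $a_{1}=0$.

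\textbf{Global extension and translation back.} For $r>0$ the equation~\eqref{firstradialODE} is a regular second-order ODE (the leading coefficient $r^{2}$ is nonzero), so the local solution extends to a maximal interval $[0,R)$. To see that $R=\infty$, I would exploit that the nonlinearity enters only through the bounded term $\sin u$: from $u'' = -\tfrac{6-cr^{2}}{r}u' + \tfrac{6\sin u}{r^{2}}$ one obtains on any interval $[\varepsilon,R)$ a bound $|u''|\le (C_{1}+C_{2}r)|u'| + C_{3}$, so Gr\"onwall's inequality prevents $|u'|$, and hence $|u|$, from blowing up in finite $r$; therefore the solution is global on $[0,\infty)$, and by oddness on all of $\R$. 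Setting $a(r)=\sin\big(\tfrac12 u(r)\big)$ and $f(r)=\cos\big(\tfrac12 u(r)\big)$ then gives a smooth odd function $a$ with $a(0)=0$ and $a'(0)=\tfrac12 u'(0)$ taking an arbitrary real value, solving~\eqref{hradialOdeSecond}; since $a$ is smooth and odd, $X=a(r)\dbydr$ together with $f$ defines a genuine smooth $\G$-structure on all of $\R^{7}$, and with $Y=c\vR$ the triple $(\varphi,Y,c)$ is a global radially symmetric soliton. Uniqueness of the solution with prescribed $a'(0)\neq 0$ follows from uniqueness of the analytic branch at the regular singular point combined with uniqueness for the regular ODE on $r>0$.

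\textbf{Main obstacle.} I expect the crux to be the regular-singular-point analysis: proving convergence of the Frobenius series via the majorant estimate, confirming that the branch attached to the indicial root $s=1$ is free of logarithms, and verifying that the free parameter really is $u'(0)$ with no hidden constraints. By contrast, the passage from a local to a global solution is soft, being a Gr\"onwall continuation argument once one observes that the nonlinearity is bounded.
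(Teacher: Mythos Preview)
Your proposal is correct and follows essentially the same route as the paper: reduce to the single ODE~\eqref{firstradialODE}, establish a one-parameter family of odd analytic solutions near the regular singular point $r=0$ via a formal power series whose coefficients satisfy the recursion $(k-1)(k+6)a_k = c(k-2)a_{k-2} + \text{(polynomial in lower $a_j$)}$, and then extend globally by exploiting that the only nonlinearity is the bounded factor $\sin u$. The only differences are in technical execution: the paper packages the convergence step by citing a general theorem for nonlinear regular singular points (Hsieh--Sibuya, Theorem~V-2-7) rather than carrying out a majorant estimate by hand, and for global existence it makes the change of variable $r=\re^x$ and bounds $z=du/dx$ between two solutions of explicit linear ODEs, which is a comparison argument equivalent in spirit to your Gr\"onwall step.
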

Making the substitution~\eqref{trigsub} in the expression~\eqref{normTradial} for $\frac{1}{4} |T|^2$
gives
\begin{equation} \label{flat:squarenormT}
|T|^2 = \left( \dfrac{du}{dr} \right)^2 + 24 \dfrac{\sin^2( \tfrac{1}{2} u)}{r^2}.
\end{equation}
In Section~\ref{ODEsec}, we also prove the following:

\begin{prop} \label{prop:asymptotic-torsion}
Consider the radially symmetric solitons for the isometric flow on the Euclidean $\R^7$ given by Theorem~\ref{thm:R7final}, with torsion $T$. If $c \leq 0$ then $\lim_{r \to \infty} |T|^2 = 0$. If $c > 0$ then $\lim_{r \to \infty} |T|^2 = \infty$.
\end{prop}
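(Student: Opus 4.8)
The plan is to pass to the variable $t=\log r$, so that the regular singular point $r=0$ becomes the end $t\to-\infty$. Writing $\dot u=\tfrac{du}{dt}=r u'$ and using $r^2u''=\ddot u-\dot u$, the ODE \eqref{firstradialODE} becomes the non-autonomous pendulum-type equation
\[
\ddot u+(5-ce^{2t})\,\dot u-6\sin u=0 ,
\]
while, since $\sin^2(\tfrac12 u)=\tfrac12(1-\cos u)$, the torsion norm \eqref{flat:squarenormT} reads $|T|^2=e^{-2t}\bigl(\dot u^2+12(1-\cos u)\bigr)$. Because the soliton of Theorem~\ref{thm:R7final} has $u$ a smooth odd function of $r$ with $u(0)=0$, we have $u\to 0$ and $\dot u=ru'\to 0$ as $t\to-\infty$. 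The basic tool is the energy $V:=\tfrac12\dot u^2+6\cos u$, for which the equation gives $\dot V=(ce^{2t}-5)\dot u^2$; in particular $V(t)\to 6$ as $t\to-\infty$. (A useful companion identity, obtained similarly, is $\tfrac{d}{dt}|T|^2=2c\dot u^2-e^{-2t}\bigl[12(\dot u-\sin u)^2+48\sin^4(\tfrac12 u)\bigr]$.)

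For $c\le 0$ this already finishes the argument: then $ce^{2t}-5\le -5<0$, so $V$ is non-increasing on all of $\R$ and hence $V\le 6$ everywhere. Therefore $\dot u^2\le 12(1-\cos u)$, so $\dot u^2+12(1-\cos u)\le 24(1-\cos u)\le 48$, whence $|T|^2\le 48\,e^{-2t}=48/r^2\to 0$. (The companion identity also shows $|T|^2$ is monotonically non-increasing in $t$ in this case, but the crude bound is all we need.)

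The case $c>0$ is the real content, and is where the detailed ODE analysis of Section~\ref{ODEsec} enters. Since the term $24\sin^2(\tfrac12 u)/r^2$ of $|T|^2$ tends to $0$, it suffices to prove $(u')^2=e^{-2t}\dot u^2\to\infty$, i.e. that $|\dot u|$ grows faster than $e^t$. I would argue by a dichotomy. \emph{Trapping:} choose $t_*$ with $ce^{2t}-5\ge 7$ for $t\ge t_*$; then wherever $|\dot u|\ge 1$ (and $t\ge t_*$) the equation forces $\operatorname{sgn}\ddot u=\operatorname{sgn}\dot u$, so if $|\dot u|\ge 1$ ever occurs at such a $t$ it stays $\ge 1$, and bootstrapping ($\ddot u/\dot u\ge \tfrac12(ce^{2t}-5)$ once $|\dot u|$ is moderately large) gives doubly-exponential blow-up of $|\dot u|$, hence $|T|^2\to\infty$. \emph{Otherwise} $|\dot u|<1$ for all large $t$; then $V\le\tfrac{13}2$ is bounded and, being eventually non-decreasing, converges, so $\int^\infty(ce^{2t}-5)\dot u^2\,dt<\infty$, hence $\int^\infty e^{2t}\dot u^2\,dt<\infty$; by Cauchy--Schwarz $\int^\infty|\dot u|\,dt<\infty$, so $u\to u_\infty$, and (since $e^{2t}\dot u^2$ is integrable and $e^{2t}\to\infty$) $\dot u\to 0$. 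Using the integrating factor $r^6e^{-cr^2/2}$ for the first-order equation satisfied by $u'$ yields the identity
\[
r^6e^{-cr^2/2}\,u'(r)=6\int_0^r \rho^4 e^{-c\rho^2/2}\sin u(\rho)\,d\rho ,
\]
and this degenerate case is precisely the vanishing of $L:=\int_0^\infty \rho^4 e^{-c\rho^2/2}\sin u(\rho)\,d\rho$; when $L\ne 0$ the identity instead gives $u'(r)\sim 6L\,r^{-6}e^{cr^2/2}\to\pm\infty$ and we are done. It remains to exclude $L=0$ for every nontrivial soliton with $c>0$. The monotonicity of $V$ already rules out $u_\infty\in\pi+2\pi\Z$ (there $V_\infty=-6$, but $V_\infty\ge V(t_1)>-6$, where $t_1$ is the strict minimum point of $V$); the remaining possibilities are excluded by following the soliton trajectory globally from the regular singular point, as carried out in Section~\ref{ODEsec}.

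The main obstacle is exactly this last step in the $c>0$ case: excluding the possibility that $u'(r)$ decays at infinity (equivalently $L=0$). This cannot be detected by the energy $V$ alone---a decaying solution is consistent with all of the integral identities above---and genuinely requires the global qualitative analysis of the distinguished solution emanating from the regular singular point $r=0$, which is the purpose of Section~\ref{ODEsec}. By contrast, the steady and expanding cases are a one-line energy estimate.
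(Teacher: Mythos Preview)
Your treatment of $c\le 0$ is correct and is exactly the paper's argument: the same Lyapunov function $V=\tfrac12\dot u^2+6\cos u$ (the paper writes it as $L=\tfrac12 z^2+d\cos u$ with $d=6$) is non-increasing, so $\dot u$ is bounded and $|T|^2\to 0$. Your explicit bound $|T|^2\le 48/r^2$ is a slightly sharper reading of the same estimate.

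For $c>0$ your reduction is also correct and parallels the paper's: both arguments come down to excluding the scenario in which $w(r):=\tfrac16 r^6 e^{-cr^2/2}u'(r)\to 0$ (your $L=0$). But your description of how Section~\ref{ODEsec} resolves this is mistaken. The paper does \emph{not} carry out an in-house ``global qualitative analysis of the distinguished solution emanating from the regular singular point.'' Instead, the decisive input is imported: Lemma~\ref{lem:BWblowup} quotes Theorem~1 of Bizo\'n--Wasserman~\cite{BWharmonic}, which states directly that $r^3|u'|\to\infty$ for every nonzero global solution when $c>0$. The paper then argues by contradiction (Proposition~\ref{prop:Hopital}): assuming $\liminf_{r\to\infty}|T|<\infty$ forces $w\to 0$, and a l'H\^opital argument applied to $r^3 u'=w/\ell$ with $\ell=\tfrac16 r^3 e^{-cr^2/2}$ shows $r^3 u'$ is bounded along subsequences, contradicting the cited lemma. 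Your dichotomy and trapping step are not wrong, but they are unnecessary once the Bizo\'n--Wasserman result is invoked; conversely, your non-trapping branch (equivalently $L=0$) is precisely what that external result rules out, and the paper offers no independent proof of it.
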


\subsection{The isometric flow on $\R^7$ in the radially symmetric case} \label{R7flowsec}

Our expression~\eqref{simplerdivT} for the vector field $\tdiv T$ in the radially symmetric case gives
$$\tdiv T = 2 \left[ - \dfrac{a''}{f } - \dfrac{a (a')^2}{f^3 } - 6 \dfrac{a'}{f r} + 6 \dfrac{fa}{r^2}\right] \dib{r}$$
after eliminating derivatives of $f$ using~\eqref{firstfhrule} and~\eqref{secondfhrule}. Again, since $X =a \dbydr$ and $\tdiv T$ are everywhere linearly dependent, the cross product term in the formula~\eqref{isoflowX} for the flow of $X$ vanishes, and thus the isometric flow for radially symmetric data on the Euclidean $\R^7$ specializes to
$$\dot X = -\tfrac{1}{2} f \tdiv T = -\left[ - a'' - \dfrac{a (a')^2}{f^2} - 6 \dfrac{a'}{r} +6\dfrac{f^2 a}{r^2}\right] \dib{r}.$$
Substituting $X =a \dbydr$, we get
\begin{equation} \label{radialhdot}
\dot a = a'' + \dfrac{a (a')^2}{f^2} + 6 \dfrac{a'}r - 6 \dfrac{f^2 a}{r^2}.
\end{equation}
Furthermore, under our trigonometric substitution $a = \sin(\frac{1}{2} u)$ this is equivalent to
\begin{equation} \label{radialydot}
\dot u = u''+6 \dfrac{u'}{r} -6 \dfrac{\sin u}{r^2}.
\end{equation}
We suspect that there are conditions under which this flow converges to a radially symmetric soliton. Proving this may be facilitated by examining the behavior under the flow of the quantity
\begin{equation} \label{radialQdef}
Q = \dfrac{a''}{r a'} +\dfrac{a a'}{f^2 r} + \dfrac{6}{r^2} - \dfrac{6f^2 a}{r^3 a'}
\end{equation}
which by~\eqref{hradialOdeSecond} equals $c$ along the solitons and is thus \emph{constant} (and vanishes on the steady soliton). 

\section{Isometric solitons from $\mathrm{SU}(3)$-structures} \label{SU3sec}

In this section we consider two cases where $M^7 = L^1 \times N^6$ is equipped with a torsion-free $\G$-structure which is induced from a particular $\mathrm{SU}(3)$-structure on a $6$-manifold $N$ as a warped product:
\begin{enumerate}[{$[$}a{$]$}]
\item When $L^1 = \R$ or $L^1 = S^1$ and $N^6$ is equipped with a Calabi--Yau structure. There is a torsion-free $\G$-structure on $M^7 = L^1 \times N^6$ inducing the product metric.
\item When $L^1 = (0, \infty)$ and $N^6$ is equipped with a nearly K\"ahler structure. There is a torsion-free $\G$-structure on $M^7 = (0, \infty) \times N^6$ inducing the cone metric.
\end{enumerate}
In both cases, we look for solutions of the isometric flow soliton equation~\eqref{solitoncond} for which the data depends only on the coordinate $r$ on $L^1$. 

We first collect some general facts about warped product $\G$-structures over $L^1 \times N^6$ where $N^6$ is equipped with an $\mathrm{SU}(3)$-structure. We use the same notation as in Karigiannis--McKay--Tsui~\cite{KMT}.

In what follows, the underlying manifold $M$ will be a product $L^1 \times N^6$, where $L$ is a circle or a line and $N$ is endowed with an $\mathrm{SU}(3)$-structure. That is, $N$ carries a Riemannian metric $\gN$, a $\gN$-compatible almost complex structure $\rJ$, a K\"ahler form $\omega$ satisfying the usual compatibility condition $\omega(X,Y) = \gN(\rJ X, Y)$, and a nowhere vanishing $(3,0)$-form $\Omega$ such that
$${\mathrm{vol}}_{\gN} = \tfrac16 \omega^3 = \tfrac18 \ri \Omega \wedge \overline{\Omega}.$$

We define a class of $\G$-structures on $M$ as follows. Denoting by $r$ a coordinate along $L$, let $F(r)$ be a smooth nowhere-vanishing complex-valued function, and let $G(r)$ be a smooth real positive function. Then the 3-form
\begin{equation} \label{eq:SU3-varphi}
\varphi=\Re(F^3 \Omega) -G |F|^2 dr \wedge \omega
\end{equation}
induces on $M$ the Riemannian metric
$$g= G^2\, dr^2 + |F|^2 \gN$$
which is a warped product over the base $L$. The corresponding 4-form is
$$\psi = -G \,dr \wedge \Im( F^3 \Omega) - \tfrac{1}{2} |F|^4 \omega^2.$$

In the following subsections we construct solitons and describe the isometric flow for two sub-classes of such $\G$-structures, which are both torsion-free.

\subsection{Calabi--Yau fibrations} \label{CYsec}

In this case we assume that $(N, \gN, \rJ, \omega, \Omega)$ is a \emph{Calabi--Yau} $3$-fold. Thus $\rJ$ is integrable and $d\omega=0$, so that $\gN$ is a K\"ahler metric, and moreover $d\Omega = 0$ and $\gN$ is \emph{Ricci-flat}.

As before, we consider $\G$-structures isometric to a torsion-free background. In this case, the background is the product metric
$$g = dr^2 + \gN$$
corresponding to $F=G=1$. Thus, this is a special case of the bundle metric~\eqref{eq:general-bundle-metric} where
$$ n=6, \qquad m=1, \qquad h(r) = 1, \qquad k(r) = 1. $$

\textbf{Isometric solitons.} We suppose that our isometric soliton is described by data depending only on the coordinate $r$ on $L^1$. That is, we impose that
\begin{equation} \label{CY-symm}
X = a(r) \dib{r} \quad \text{and} \quad Y = b(r) \dib{r}.
\end{equation}
Hence, we have $|X|^2 = a^2$ and $f^2 = 1 - |X|^2 = 1 - a^2$. 
\begin{prop} Let $\phir$ be the $\G$-structure given by~\eqref{eq:SU3-varphi} for $F=G=1$, let $\phii$ be the isometric structure given by~\eqref{isog2form}, and
let $(\phii,Y,c)$ satisfy~\eqref{solitoncond} where $X$ and $Y$ are given by~\eqref{CY-symm}. Then $c=0$, so there can only be \emph{steady} solitons of this form. Moreover, $b$ is a constant, and $a(r)$ is a solution of the ODE
\begin{equation} \label{CY-simpleODE}
a'' - b a'+ \dfrac{a (a')^2}{1-a^2}=0.
\end{equation}
Finally, the torsion of $\phii$ satisfies
\begin{equation} \label{CY-Tnorm}
\tfrac{1}{4} |T|^2= \dfrac{(a')^2}{1-a^2}.
\end{equation}
\end{prop}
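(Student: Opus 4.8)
The argument is a direct specialization of the machinery of Section~\ref{Laplacian-sec} and runs almost exactly parallel to the derivation of the $\R^7$ soliton equation in Section~\ref{R7derivationsec}, since the background metric $g = dr^2 + \gN$ is the bundle metric~\eqref{eq:general-bundle-metric} with $n = 6$, $m = 1$, $h \equiv 1$, $k \equiv 1$. I would begin with the equation $\mathcal{L}_Y g = 2cg$ for $Y = b(r)\frac{\partial}{\partial r}$. Because $m = 1$ here, Lemma~\ref{lemma:LYg} gives $ch = bh'$ and $ck = bk' + kb'$; substituting $h = k \equiv 1$ forces $c = 0$ and $b' = 0$, so the soliton must be steady and $b$ is a constant. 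The only point requiring attention is that $m = 1$, so we are in the branch of Lemma~\ref{lemma:LYg} that yields $b$ constant, rather than the $m > 1$ branch that would instead force $b$ linear in $r$.

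Since $b$ is constant, $Y^\flat = b\,dr$ is closed, so $\curl_\varphi Y = \big[\hstar(dY^\flat)\wedge\psi\big]^\sharp = 0$ by~\eqref{curldefn1}, and the second soliton equation reduces to $\tdiv T = Y \hk T$. To evaluate both sides I would invoke Corollary~\ref{cor:T-phidropout}. By~\eqref{eq:radialXderiv} with $k \equiv 1$ we have $\nr X = a'\frac{\partial}{\partial r}$, so $\nabla_Y X = ba'\frac{\partial}{\partial r}$ is proportional to $\frac{\partial}{\partial r}$, hence to $X$; and Corollary~\ref{cor:general-Laplacian-1} with our values of $n,m,h,k$ collapses (every term carrying a factor $m-1$, $h'$, $h''$, $k'$, or $k''$ vanishes) to $\Delta X = a''\frac{\partial}{\partial r}$, again proportional to $X$. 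Thus both parts of Corollary~\ref{cor:T-phidropout} apply, and using also $\Delta f = f''$ from Corollary~\ref{cor:general-Laplacian} together with $X^\flat = a\,dr$, we get
$$Y \hk T = 2(Yf)\,X^\flat - 2f\,(\nabla_Y X)^\flat = 2b(af' - a'f)\,dr, \qquad \tdiv T = 2(\Delta f)\,X^\flat - 2f\,(\Delta X)^\flat = 2(af'' - a''f)\,dr.$$

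To extract the ODE I would eliminate derivatives of $f$ via $f^2 + a^2 = 1$, exactly as in~\eqref{fprime}--\eqref{secondfhrule}: this gives $f' = -aa'/f$, $af' - a'f = -a'/f$, and $af'' - a''f = -a''/f - a(a')^2/f^3$. Equating the two displayed expressions and cancelling the common factor $2\,dr$ gives $af'' - a''f = b(af' - a'f)$; substituting the $f$-identities and clearing the $1/f$ yields $a'' + a(a')^2/f^2 = ba'$, i.e.\ equation~\eqref{CY-simpleODE} after writing $f^2 = 1-a^2$. Finally, for the torsion norm I would use Proposition~\ref{prop:T-formula}: $\tfrac14|T|^2 = |\nabla X|^2 + |\nabla f|^2$. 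Corollary~\ref{cor:general-normsq-nablaX} with $n = 6$, $m = 1$, $h = k \equiv 1$ collapses to $|\nabla X|^2 = (a')^2$, while $|\nabla f|^2 = (f')^2$ (using $|\nabla r| = 1$, which holds since $k \equiv 1$) and $(f')^2 = a^2(a')^2/f^2$ by $f' = -aa'/f$. Adding gives $\tfrac14|T|^2 = (a')^2(a^2 + f^2)/f^2 = (a')^2/(1-a^2)$, which is~\eqref{CY-Tnorm}.

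There is no real obstacle: once $h = k \equiv 1$ is substituted into the formulas of Section~\ref{Laplacian-sec}, every geometric quantity becomes an explicit function of $a$, $f$, and their first two $r$-derivatives, and the only genuine labor — already carried out for the Euclidean case — is the bookkeeping that trades derivatives of $f$ for those of $a$ using $f^2 + a^2 = 1$. The one conceptual point worth remembering is that when $L = S^1$ the product $L \times N$ is not literally the total space of a vector bundle, but the formulas of Section~\ref{Laplacian-sec} are local in the $r$-direction, so this causes no difficulty.
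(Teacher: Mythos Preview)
Your proposal is correct and follows essentially the same route as the paper's own proof: both invoke Lemma~\ref{lemma:LYg} with $m=1$, $h=k\equiv 1$ to force $c=0$ and $b'=0$, observe that $\curl_\varphi Y=0$ since $Y^\flat$ is closed, specialize Corollaries~\ref{cor:general-Laplacian},~\ref{cor:general-Laplacian-1},~\ref{cor:general-normsq-nablaX} and apply Corollary~\ref{cor:T-phidropout} to compute $\tdiv T$ and $Y\hk T$, and then eliminate derivatives of $f$ via~\eqref{firstfhrule}--\eqref{secondfhrule}. Your closing remark about the $L=S^1$ case and the local nature of the Section~\ref{Laplacian-sec} formulas is a nice observation that the paper does not make explicit.
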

\begin{proof}
Using Lemma~\ref{lemma:LYg} and the fact that $m=1$ and $h = k = 1$, the first soliton condition in~\eqref{solitoncond} becomes $c = 0$ and $b' = 0$, so $b$ is a \emph{constant}. Since $\curl Y = \left[ \hstar (d Y^{\flat}) \wedge \psi\right]^{\sharp}$ from~\eqref{curldefn1}, and in this case $Y^{\flat} = b \,dr$ is closed, we see that $\curl Y=0$. Thus, the second soliton condition in~\eqref{solitoncond} simplifies to
\begin{equation} \label{simplerCYcond}
\tdiv T = b\dib{r} \hk T.
\end{equation}
In this case,~\eqref{eq:radialXderiv} shows that $\nr X = a' \dbydr$. Since this is a multiple of $X$, applying Corollary~\ref{cor:T-phidropout} shows that~\eqref{eq:radialhookTnophi} holds. Specializing to this case and using~\eqref{firstfhrule} yields
\begin{equation} \label{CY-T}
\dib{r} \hk T = 2(a f' - a'f ) dr = - 2 \frac{a'}{f} dr.
\end{equation}
Specializing Corollaries~\ref{cor:general-Laplacian} and~\ref{cor:general-Laplacian-1} to this case gives
$$ \Delta f = f'', \qquad \Delta X = a'' \dib{r}. $$
Since $\Delta X$ is a multiple of $X$, using Corollary~\ref{cor:T-phidropout} and~\eqref{secondfhrule} yields
\begin{equation} \label{CY-divT}
\tdiv T = 2(\Delta f) X^{\flat} - 2 f \Delta X^{\flat} =(2a f'' - 2a'' f) dr = \left( -2\dfrac{a''}f-2\dfrac{a (a')^2}{f^3}\right) dr.
\end{equation}
The soliton condition~\eqref{simplerCYcond} then becomes
$$-2\dfrac{a''}f-2\dfrac{a (a')^2}{f^3} = -2 b \dfrac{a'}{f}.$$
Using $f^2=1-a^2$ leads to the ODE~\eqref{CY-simpleODE}.

Specializing Corollary~\ref{cor:general-normsq-nablaX} to this case gives $|\nabla X|^2 = (a')^2$, and then from~\eqref{normT} we have
$$\tfrac{1}{4} |T|^2 = |\nabla X|^2 + |\nabla f|^2 = (a')^2 + (f')^2.$$
Using the relation $a^2+f^2=1$ to eliminate $f$ yields~\eqref{CY-Tnorm}.
\end{proof}

Note that if we set $a(r) = \sin(u(r))$, then the ODE~\eqref{CY-simpleODE} is transformed to 
\begin{equation} \label{CY-soliton-eq}
u'' - b u'=0
\end{equation}
with solution $u(r)=c_0 +c_1 \re^{br}.$

In the case where $L = S^1$, the only periodic solutions are those where $a$, and hence also $f$, are constant. By~\eqref{CY-Tnorm}, these solitons are torsion-free. They are product $\G$-structures on $S^1 \times N^6$ which correspond to phase rotations of the holomorphic volume form $\Omega$ on $N^6$ (see~\cite[Section 3.3]{K-thesis}).

In the case where $L = \R$ and $b,c_1\ne 0$ we have non-trivial solitons. By reversing the $r$ coordinate if necessary, we can assume that 
$b>0$. Then from~\eqref{CY-Tnorm} we obtain that
$$ \tfrac{1}{4} |T|^2 = c_1^2 b^2 \re^{2br}, $$
so $|T|$ vanishes as $r \to -\infty$, whereas it is unbounded as $r \to+\infty$.

We have thus established the following.

\begin{thm} \label{thm:CYfinal}
Let $M^7 = \R \times N^6$, where $N$ is a Calabi--Yau $3$-fold, and $M$ is equipped with the standard torsion-free product $\G$-structure $\varphi = \Re(\Omega) - dr \wedge \omega$. The only isometric solitons in the isometry class of $[\varphi]$ of the special form~\eqref{CY-symm} are \emph{steady}, and are given by
$$ a(r) = \sin (c_0 + c_1 \re^{br}) \quad \text{and} \quad b(r) = b $$
for constants $b, c_0, c_1$. Moreover, the pointwise norm $|T|$ of the torsion of this solution goes to zero at one end of $\R$ and becomes unbounded at the other end.
\end{thm}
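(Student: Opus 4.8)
The proof mainly assembles the facts established in this subsection; the plan is as follows.

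\textbf{Step 1 (reduction to a scalar ODE).} I would invoke the Proposition immediately preceding this theorem. For an isometric soliton $(\phii,Y,c)$ whose data has the special form~\eqref{CY-symm}, Lemma~\ref{lemma:LYg} applied with $m=1$ and $h=k=1$ forces $c=0$ and $b$ constant, so every such soliton is steady; and, using~\eqref{simpledivT}, Corollary~\ref{cor:T-phidropout}, and the specializations of Corollaries~\ref{cor:general-Laplacian}, \ref{cor:general-Laplacian-1} and~\ref{cor:general-normsq-nablaX} to $n=6$, $m=1$, $h=k=1$, the second soliton equation in~\eqref{solitoncond} becomes the scalar ODE~\eqref{CY-simpleODE} for $a$, while~\eqref{normT} yields $\tfrac14|T|^2=(a')^2/(1-a^2)$ as in~\eqref{CY-Tnorm}. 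Read in reverse, the same computation shows conversely that a constant $b$ together with any solution $a$ of~\eqref{CY-simpleODE} produces, via~\eqref{isog2form}, an isometric soliton of the form~\eqref{CY-symm}.

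\textbf{Step 2 (linearization and integration).} Since $f^2+|X|^2=1$ forces $|a|\le 1$, I would set $a=\sin u$; substituting $a'=u'\cos u$ and $a''=u''\cos u-(u')^2\sin u$ turns~\eqref{CY-simpleODE} into $\cos u\,(u''-bu')=0$, hence into the \emph{linear} equation~\eqref{CY-soliton-eq}, $u''-bu'=0$, wherever $\cos u\ne 0$ and, by continuity, everywhere. Integrating gives $u(r)=c_0+c_1\re^{br}$, so $a(r)=\sin(c_0+c_1\re^{br})$ and $b(r)=b$, which is the asserted classification. (For $L=S^1$ one checks separately that periodicity of $a$ forces $c_1=0$, so $a$ and $f$ are constant and the torsion vanishes; hence the nontrivial solitons occur only for $L=\R$.)

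\textbf{Step 3 (asymptotics of the torsion).} Substituting $a=\sin(c_0+c_1\re^{br})$ into~\eqref{CY-Tnorm}, and using $1-a^2=\cos^2 u$ and $a'=c_1 b\,\re^{br}\cos u$, I obtain $\tfrac14|T|^2=c_1^2 b^2\re^{2br}$. If $b=0$ or $c_1=0$ then $|T|\equiv 0$; otherwise, replacing $r$ by $-r$ if necessary, we may assume $b>0$, whence $|T|\to 0$ as $r\to-\infty$ and $|T|\to\infty$ as $r\to+\infty$, which is the last assertion of the theorem.

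Since the analytic content is already contained in the preceding Proposition, I do not expect a serious obstacle. The only points meriting care are in Step~2: justifying that the substitution $a=\sin u$ is reversible across the locus $\{f=0\}$, so that $u$ is globally defined and smooth --- this uses that $a'=0$ wherever $|a|=1$ --- and checking that the $u=c_0+c_1\re^{br}$ so obtained do satisfy~\eqref{solitoncond} even where $f$ vanishes, which follows by re-running the identities of Step~1 in the non-singular form $\tdiv T=2(\Delta f)X^\flat-2f\,\Delta X^\flat$.
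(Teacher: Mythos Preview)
Your proposal is correct and follows essentially the same approach as the paper: invoke the preceding Proposition to reduce to the ODE~\eqref{CY-simpleODE}, linearize via $a=\sin u$ to obtain $u''-bu'=0$, integrate, and read off the torsion asymptotics from~\eqref{CY-Tnorm}. Your extra care in Step~2 about the factor $\cos u$ and the locus $\{f=0\}$ is a minor refinement the paper omits, but the argument is otherwise identical.
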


\begin{rmk} \label{rmk:CY-more-general}
More generally, we could suppose that the vector field $Y$ in the soliton conditions~\eqref{solitoncond} takes the form
$$ Y = b(r) \dib{r} + \lambda(r) Z $$
where $Z$ is a vector field on $N$. Expanding the condition $\Lie_Y g =2c g$ gives
$$ 2 b' (dr)^2 + \lambda \Lie_Z \gN = 2 c (dr)^2 + 2 c \gN, $$
implying that $b' = c$ and $\Lie_Z \gN = \frac{2 c}{\lambda} \gN$. Hence, $b(r) = c r + b_0$, $\lambda$ is a nonzero constant, and $Z$ is a conformal Killing field on $N$. However, we claim that if $N$ is compact then necessarily $c=0$. To see this, we trace the conformal Killing condition
$$ \nablaN_i Z_j + \nablaN_j Z_i = \dfrac{2c}{\lambda} \gN_{ij} $$
to obtain $d^* Z^{\flat} = -6c/\lambda$. Then integrating over $N$ gives $c=0$. Hence, $Z$ is a Killing field and $N$ has reduced holonomy.
\end{rmk}

\textbf{The isometric flow.} Consider the isometric flow among $\G$-structures isometric to the product metric, obtained from $\varphi$ by~\eqref{isog2form} using a vector field $X$ of the special form $X = a \frac{\partial}{\partial r}$. Our computations above show that $X$ and $\tdiv T$ are everywhere linearly dependent, so the cross product term in~\eqref{isoflowX} for the flow of $X$ vanishes, and thus by~\eqref{CY-divT} the isometric flow in this case specializes to
$$\dot{a} = a'' +\dfrac{a(a')^2}{1-a^2}.$$
Remarkably, if we again make the substitution $a(r)=\sin u(r)$, this becomes the heat equation
$$ \dot{u} = u''. $$

In the case $L = S^1$, we can use Fourier series to show that $u$ converges exponentially to a constant. In the case $L = \R$, we know that if $u$ is asymptotic to the same constant at both ends then it will converge exponentially to that constant value.

\begin{rmk}
If the initial data is strictly monotone, it may be possible to show that the isometric flow converges to the soliton describe above. By~\eqref{CY-soliton-eq}, on this soliton the quantity $Q = \tfrac{u''}{u'}$ is constant. Under the heat equation $\dot{u} = u''$, a computation shows that $\dot{Q} = Q'' +2 Q Q'$. Under this evolution, one would need to prove that $Q$ converges to a constant as $t \to +\infty$.
\end{rmk}

\subsection{Nearly K\"ahler fibrations} \label{NKsec}

In this case we assume that $(N, \gN, \rJ, \omega, \Omega)$ is a \emph{nearly K\"ahler} $6$-manifold. This means that the $\mathrm{SU}(3)$-structure satisfies
$$d\omega= -3\Re(\Omega), \qquad d\Im(\Omega) = 2\omega^2.$$

As before, we consider $\G$-structures isometric to a torsion-free background. In this case, the background is the \emph{cone} metric
$$g = dr^2 + r^2 \gN $$
corresponding to $F(r) = r$ and $G=1$. Thus, this is a special case of the bundle metric~\eqref{eq:general-bundle-metric} where
$$ n=6, \qquad m=1, \qquad h(r) = r, \qquad k(r) = 1. $$

\textbf{Isometric solitons.} We again suppose that our isometric soliton is described by data depending only on the coordinate $r$ on $L^1$. That is, we impose that
\begin{equation} \label{NK-symm}
X = a(r) \dib{r} \quad \text{and} \quad Y = b(r) \dib{r}.
\end{equation}
Hence, by~\eqref{eq:normdbydr} we have $|X|^2 = a^2$ and $f^2 = 1 - |X|^2 = 1 - a^2$. 

\begin{prop} \label{prop:NK-solitons}
Let $\phir$ be the $\G$-structure given by~\eqref{eq:SU3-varphi} for $F(r)=r$ and $G=1$, let $\phii$ be the isometric structure given by~\eqref{isog2form}, and
let $(\phii,Y,c)$ satisfy~\eqref{solitoncond} where $X$ and $Y$ are given by~\eqref{NK-symm}. Then $b(r) = cr$, and $a(r)$ is a solution of the ODE
\begin{equation} \label{NKode}
a'' + \dfrac{a}{f^2} (a')^2 + (6-c r^2) \dfrac{a'}{r} = 6 \dfrac{f^2}{r^2} a,
\end{equation}
which is the same as~\eqref{hradialOdeSecond}. Moreover the torsion satisfies
$$ \tfrac{1}{4}|T|^2 = \dfrac{(a')^2}{f^2} + 6 \dfrac{a^2}{r^2}, $$
which is the same as~\eqref{normTradial}.
\end{prop}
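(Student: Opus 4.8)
The plan is to reduce everything to the radially symmetric computation of Section~\ref{R7sec}. The key observation is that the cone metric $g = dr^2 + r^2\gN$ is the bundle metric~\eqref{eq:general-bundle-metric} with $n = 6$, $m = 1$, $h(r) = r$, $k(r) = 1$ --- exactly the same parameters as for Euclidean $\R^7\setminus\{0\}$ --- and that the background $\G$-structure~\eqref{eq:SU3-varphi} is torsion-free, hence its metric is Ricci-flat, so the hypotheses of Corollaries~\ref{cor:general-Laplacian}, \ref{cor:general-Laplacian-1}, and~\ref{cor:general-normsq-nablaX} are met with these values. Almost every step of the proof of Proposition~\ref{R7ODE} then applies with the obvious changes; the only genuinely new point is to confirm that the curl term still vanishes, now that $\varphi$ is built from a nearly K\"ahler rather than a flat structure.

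First I would apply Lemma~\ref{lemma:LYg} with $m = 1$, $h = r$, $k = 1$: the relations $ch = bh'$ and $ck = bk' + kb'$ become $cr = b$ and $c = b'$, which are consistent and give $b(r) = cr$ for every $c$ (in contrast to the Calabi--Yau case of Section~\ref{CYsec}, where $h = k = 1$ forces $c = 0$). Next, since $m = 1$ and $k = 1$ we have $Y^\flat = b\,dr = cr\,dr = \tfrac{c}{2}\,d(r^2)$, which is exact; so $dY^\flat = 0$ and hence $\curl_\varphi Y = [\hstar(dY^\flat)\wedge\psi]^\sharp = 0$ by~\eqref{curldefn1}, independently of $\varphi$. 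Thus the second equation of~\eqref{solitoncond} collapses to $\tdiv T = Y\hk T = cr\,\dbydr\hk T$, which is precisely equation~\eqref{R7Tcond}.

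From here I would follow the proof of Proposition~\ref{R7ODE}. By~\eqref{eq:radialXderiv} with $k = 1$, both $\nr X$ and (by Corollary~\ref{cor:general-Laplacian-1}) $\Delta X$ are multiples of $X$, so Corollary~\ref{cor:T-phidropout} removes all $\phir$-terms and, using~\eqref{firstfhrule}, gives $\dbydr\hk T = -2\tfrac{a'}{f}\,dr$; while Corollaries~\ref{cor:general-Laplacian} and~\ref{cor:general-Laplacian-1} specialize to $\Delta f = f'' + \tfrac{6}{r}f'$ and $\Delta X = \big(a'' + \tfrac{6}{r}a' - \tfrac{6}{r^2}a\big)\dbydr$, yielding $\tdiv T = 2\big((af'' - a''f) + \tfrac{6}{r}(af' - a'f) + \tfrac{6}{r^2}af\big)dr$. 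Substituting into~\eqref{R7Tcond}, dividing by $2r$, and eliminating derivatives of $f$ with~\eqref{fprime}, \eqref{firstfhrule}, \eqref{secondfhrule} produces~\eqref{hradialOdeSecond}, which is~\eqref{NKode}. For the torsion norm, Corollary~\ref{cor:general-normsq-nablaX} gives $|\nabla X|^2 = (a')^2 + 6\tfrac{a^2}{r^2}$, so by~\eqref{normT} and~\eqref{fprime}, $\tfrac14|T|^2 = (a')^2 + (f')^2 + 6\tfrac{a^2}{r^2} = \tfrac{(a')^2}{f^2} + 6\tfrac{a^2}{r^2}$, as in~\eqref{normTradial}.

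The work is essentially bookkeeping rather than a real obstacle. The one point deserving care is verifying that nothing in the Section~\ref{R7sec} derivation secretly used features special to flat space or the round sphere --- this is exactly why Section~\ref{Laplacian-sec} was developed in the general bundle setting, so that only $n$, $h$, $k$, and Ricci-flatness enter --- and that no analogue of Lemma~\ref{lemma:R7Y} is needed here, since $Y = b(r)\dbydr$ is imposed by hypothesis. The single structural input to cite is that the $\G$-cone over a nearly K\"ahler $6$-manifold is torsion-free, hence Ricci-flat.
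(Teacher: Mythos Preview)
Your proposal is correct and follows essentially the same approach as the paper's proof: identify the cone metric as the bundle metric~\eqref{eq:general-bundle-metric} with the same parameters $n=6$, $m=1$, $h=r$, $k=1$ as in the Euclidean case, use Lemma~\ref{lemma:LYg} to get $b=cr$, observe $Y^\flat$ is closed so $\curl_\varphi Y=0$, and then invoke the same sequence of Corollaries~\ref{cor:T-phidropout},~\ref{cor:general-Laplacian},~\ref{cor:general-Laplacian-1},~\ref{cor:general-normsq-nablaX} as in Section~\ref{R7sec}. Your additional remarks about what is genuinely new (the curl vanishing) versus bookkeeping, and about why no analogue of Lemma~\ref{lemma:R7Y} is needed, are accurate and match the paper's reasoning.
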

\begin{proof}
Using Lemma~\ref{lemma:LYg} and the fact that $m=1$, $h = r$, and $k = 1$, the first soliton condition in~\eqref{solitoncond} becomes $c r = b$ and $c = b'$, and thus $b = c r$ is the only constraint. As in the Calabi-Yau case, since $Y^{\flat} = b(r) \,dr$ then $\curl Y=0$. Thus, the second soliton condition in~\eqref{solitoncond} simplifies to
\begin{equation} \label{simplerNKcond}
\tdiv T = c r \dib{r} \hk T,
\end{equation}
which is the same as the condition~\eqref{R7Tcond} for radially symmetric solitons on Euclidean $\R^7$.

As in the Calabi-Yau case, $\nr X$ is a multiple of $X$,~\eqref{eq:radialhookTnophi} holds, and we compute that $\dbydr \hk T = -2\dfrac{a'}{f} dr$.
In this case, Corollaries~\ref{cor:general-Laplacian} and~\ref{cor:general-Laplacian-1} specialize to give
$$ \Delta f = f'' + \dfrac{6}r f', \qquad \Delta X = \left( a'' + \dfrac6{r} a' - \dfrac6{r^2} a \right) \dib{r}, $$
which coincide with the formula~\eqref{flatlaplacians} for the Euclidean case. Again applying Corollary~\ref{cor:T-phidropout} lets us obtain the same formula~\eqref{simplerdivT} for $\tdiv T$ as in the Euclidean case. Substituting our specializations into~\eqref{simplerNKcond} results in the same ODE as the Euclidean case.
\end{proof}

\begin{rmk} \label{rmk:NK}
In hindsight, this was to be expected, because the round $S^6$ is a nearly K\"ahler 6-manifold whose metric cone gives the Euclidean $\R^7$, so situation in Section~\ref{R7sec} is just a special case of this one.
\end{rmk}

Hence, by the arguments in Section~\ref{ODEsec}, we conclude that for any $c$ there exist smooth solutions to~\eqref{NKode} which are odd functions of $r$ defined for all $r\in\R$. Thus, the vector fields~\eqref{NK-symm} are smooth on $M$, as is the $\G$-structure $\phii$. Since the norm of the torsion is the same as in the Euclidean case, we conclude that if $c \leq 0$ then $\lim_{r \to \infty} |T|^2 = 0$, while if $c > 0$ then $\lim_{r \to \infty} |T|^2 = \infty$.

\textbf{The isometric flow.} Consider the isometric flow among $\G$-structures isometric to the cone metric, obtained from $\varphi$ by~\eqref{isog2form} using a vector field $X$ of the special form $X=a(r)\dib{r}$. Our computations above show that $X$ and $\tdiv T$ are everywhere linearly dependent, so the cross product term in~\eqref{isoflowX} for the flow of $X$ vanishes, and thus by~\eqref{simplerdivT} the isometric flow in this case specializes to
$$\dot{a} =a'' + \dfrac{a (a')^2}{1-a^2} +6\dfrac{a'}r - 6 \dfrac{a(1-a^2)}{r^2},$$
which is the same as~\eqref{radialhdot}. Again, the quantity $Q$ defined by~\eqref{radialQdef} is constant along the solitons.

\section{Isometric solitons on the Bryant--Salamon manifolds} \label{BSsec}

In this section we consider the case where $M^7$ is one of the Bryant--Salamon torsion-free $\G$-manifolds which were first discovered in~\cite{BS}. We follow the notation of Karigiannis--Lotay~\cite[Section 3]{KL}, except that we use $\lambda > 0$ instead of $c > 0$ to denote the parameter encoding the ``distance'' to the limiting asymptotic $\G$-cone.

There are two examples:

Case [a]: $M^7 = \Lambda^2_- (B^4)$, where $B^4$ is either the round $S^4$ or the Fubini--Study $\C\PR^2$
with their standard orientations.

Case [b]: $M^7 = \slashed{S} (S^3)$, the spinor bundle over the round $S^3$ with its standard orientation.

For any $\lambda > 0$, there is a torsion-free $\G$-structure $\varphi_{\lambda}$ on $M$, inducing a complete holonomy $\G$ metric $g_{\lambda}$ on $M$. For $\lambda = 0$, we get a torsion-free $\G$-structure $\varphi_0$ on $M' = M \setminus \{ \text{zero section} \}$, inducing an incomplete holonomy $\G$ metric $g_0$ on $M'$. In this case $(M', g_0)$ is a \emph{metric cone}, and for $\lambda > 0$ the Riemannian manifolds $(M, g_{\lambda})$ are asymptotically conical, with asymptotic cone $(M', g_0)$. To describe these $\G$-structures and metrics explicitly, we use the notation of Section~\ref{Laplacian-sec}.

In Case [a], let $\{ b_0, b_1, b_2, b_3 \}$ denote a local oriented orthonormal coframe for $B^4$, and let
$$\sigma_1 = b_0 \wedge b_1 - b_2 \wedge b_3, \quad \sigma_2 = b_0 \wedge b_2 - b_3 \wedge b_1, \quad
\sigma_3 = b_0 \wedge b_3 - b_1 \wedge b_2,$$
yielding a local frame of sections for $\Lambda^2_- (B^4)$. As in Section~\ref{Laplacian-sec} we let $\zeta_1, \zeta_2, \zeta_3$ be the corresponding vertical 1-forms on $M = \Lambda^2_- (B^4)$. Combined with the pullbacks of the $b_i$, these give a local coframe on $M$. There are smooth positive functions $h, k$ of $r \in [0, \infty)$ such that 
$\{ k \zeta_1, k \zeta_2, k \zeta_3, h b_0, h b_1, h b_2, h b_3 \}$ is a local $\G$-adapted orthonormal coframe for 
$\varphi_{\lambda}$. This means (omitting the $\wedge$ symbol for brevity) that
\begin{align} \label{eq:ph-BS-a}
\varphi_{\lambda} & = k^3 \zeta_1 \zeta_2 \zeta_3 + k h^2 \zeta_1 (b_0 b_1 - b_2 b_3) + k h^2 \zeta_1 (b_0 b_2 - b_3 b_1) + k h^2 \zeta_3 (b_0 b_3 - b_1 b_2), \\ 
\nonumber
g_{\lambda} & = k^2 (\zeta_1^2 + \zeta_2^2 + \zeta_3^2) + h^2 (b_0^2 + b_1^2 + b_2^2 + b_3^2).
\end{align}

In Case [b], let $\{ b_1, b_2, b_3 \}$ denote a left-invariant coframe on $S^3 \cong \mathrm{SU}(2)$, and thus also an oriented orthonormal coframe for the round metric. Denote again by $\{ b_1, b_2, b_3 \}$ their pullbacks to $\slashed{S} (S^3)$. As discussed in~\cite[Section 3]{KL}, this frame determines $\{ \zeta_0, \zeta_1, \zeta_2, \zeta_3 \}$, which is an orthonormal frame for the space of horizontal $1$-forms on $M = \slashed{S} (S^3)$ with respect to the fibre metric and connection on $M$ induced from the Riemannian metric on $S^3$. There are smooth positive functions $h, k$ of $r \in [0, \infty)$ such that $\{ h b_1, h b_2, h b_3, k \zeta_0, k \zeta_1, k \zeta_2, k \zeta_3 \}$ is a local $\G$-adapted orthonormal coframe for $\varphi_{\lambda}$. This means that
\begin{align} \label{eq:ph-BS-b}
\varphi_{\lambda} & = h^3 b_1 b_2 b_3 + h k^2 b_1 (\zeta_0 \zeta_1 - \zeta_2 \zeta_3) + h k^2 b_1 (\zeta_0 \zeta_2 - \zeta_3 \zeta_1) + h k^2 b_3 (\zeta_0 \zeta_3 - \zeta_1 \zeta_2), \\ \nonumber
g_{\lambda} & = h^2 (b_1^2 + b_2^2 + b_3^2) + k^2 (\zeta_0^2 + \zeta_1^2 + \zeta_2^2 + \zeta_3^2).
\end{align}
Note that the data $(\varphi_{\lambda}, \psi_{\lambda}, g_{\lambda})$ in case [b] is obtained from the pair in case [a] by swapping the roles of $h b$ and $k \zeta$. However, the precise nature of the functions $h$, $k$ in the torsion-free Bryant--Salamon solution is different in the two cases. Explicitly, both are special cases of the bundle metric~\eqref{eq:general-bundle-metric} where
\begin{equation} \label{eq:hk-functions}
\begin{aligned}
\text{Case [a]:} \quad n & = 4, & m & = 3, & h(r) & = \sqrt{2}(\lambda + r^2)^{\frac{1}{4}}, & k(r) & = (\lambda + r^2)^{-\frac{1}{4}}, \\
\text{Case [b]:} \quad n & = 3, & m & = 4, & h(r) & = \sqrt{3}(\lambda + r^2)^{\frac{1}{3}}, & k(r) & = 2 (\lambda + r^2)^{-\frac{1}{6}}.
\end{aligned}
\end{equation}

\textbf{Isometric solitons.} As usual we suppose that our isometric soliton is described by data depending only on the radial coordinate $r$. That is, we impose that
\begin{equation} \label{BS-symm}
X = a(r) \dib{r} \quad \text{and} \quad Y = b(r) \dib{r}.
\end{equation}
Hence, by~\eqref{eq:normdbydr} we have $|X|^2 = k^2 a^2$ and $f^2 = 1 - |X|^2 = 1 - k^2 a^2$. Because of this, in Proposition~\ref{prop:BS-solitons} below, when we make a trigonometric substitution in the differential equations that $a$ and $f$ must satisfy analogously to~\eqref{trigsub}, the appropriate substitution will instead be
\begin{equation}\label{BS-trigsub}
a = \dfrac1{k} \sin(\tfrac12 u)\qquad f = \cos(\tfrac12 u).
\end{equation}

\begin{prop} \label{prop:BS-LYg}
Consider the first soliton condition $\mathcal{L}_Y g = 2 c g$ in~\eqref{solitoncond} for the Bryant--Salamon torsion-free $\G$-manifolds, where $Y$ is of the form~\eqref{BS-symm}.
\begin{itemize}
\item If $\lambda > 0$ in either case, then we must have $c = 0$ and $b = 0$.
\item If $\lambda = 0$ in Case [a] then we must have $b = 2 c r$.
\item If $\lambda = 0$ in Case [b] then we must have $b = \frac{3}{2} c r$.
\end{itemize}
\end{prop}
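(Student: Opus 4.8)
The plan is to reduce everything to Lemma~\ref{lemma:LYg} and then substitute the explicit warping functions recorded in~\eqref{eq:hk-functions}. In both Case [a] and Case [b] the fibre rank satisfies $m \geq 3 > 1$, so the second half of Lemma~\ref{lemma:LYg} applies directly: from $\mathcal{L}_Y g = 2cg$ with $Y = b(r)\frac{\partial}{\partial r}$ we obtain $b = \mu r$ for some constant $\mu$, together with the two scalar ODEs
\[
ch = \mu r h', \qquad ck = \mu r k' + \mu k.
\]
So I would begin by recording these three facts; after that the proposition reduces to computing $h'$ and $k'$ from~\eqref{eq:hk-functions} and comparing coefficients of powers of $r$.

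For Case [a], with $h = \sqrt{2}(\lambda+r^2)^{1/4}$ one computes $r h' = \tfrac{1}{\sqrt 2}\, r^2 (\lambda+r^2)^{-3/4}$, so the first ODE, after multiplying through by $(\lambda+r^2)^{3/4}$, becomes $2c(\lambda+r^2) = \mu r^2$. Matching the constant term and the $r^2$ term forces $c\lambda = 0$ and $\mu = 2c$. The analogous computation with $k = (\lambda+r^2)^{-1/4}$ in the second ODE gives $c(\lambda+r^2) = \mu(\lambda + \tfrac12 r^2)$, which yields the \emph{same} relations $c\lambda = 0$ and $\mu = 2c$, so the $k$-equation imposes nothing new. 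Hence if $\lambda > 0$ we get $c = 0$, so $\mu = 0$ and $b = 0$; if $\lambda = 0$ then $b = \mu r = 2cr$. Case [b] has exactly the same structure: with $h = \sqrt{3}(\lambda+r^2)^{1/3}$ the first ODE collapses to $3c(\lambda+r^2) = 2\mu r^2$, giving $c\lambda = 0$ and $\mu = \tfrac32 c$, and the $k$-equation (with $k = 2(\lambda+r^2)^{-1/6}$) is again consistent with these; thus $\lambda > 0$ implies $c = b = 0$, while $\lambda = 0$ implies $b = \tfrac32 cr$.

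There is essentially no serious obstacle here; the only point requiring a little care is that Lemma~\ref{lemma:LYg} produces a \emph{pair} of ODEs, so one must verify that the $k$-equation is compatible with the $h$-equation rather than over-determining the system — and in both cases it is, as the computations above show. It is also worth remarking \emph{why} the trichotomy in $\lambda$ appears: the $h$-equation always degenerates to an identity of the form $(\text{const})\cdot(\lambda+r^2) = (\text{const})\cdot r^2$ whose constant term is a nonzero multiple of $c\lambda$, and this term can vanish only if $c = 0$ or $\lambda = 0$, which is precisely the dichotomy in the statement. So once the explicit $h,k$ are substituted, the argument is purely a matter of equating two coefficients.
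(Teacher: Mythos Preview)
Your proposal is correct and follows essentially the same approach as the paper: invoke Lemma~\ref{lemma:LYg} (valid since $m>1$) to reduce to $b=\mu r$ together with $ch=\mu r h'$ and $ck=\mu r k'+\mu k$, then substitute the explicit $h,k$ from~\eqref{eq:hk-functions} and compare coefficients of $1$ and $r^2$. The paper carries out the same computations, arriving at the identical identities $c(\lambda+r^2)=\tfrac{\mu}{2}r^2$ in Case~[a] and $c(\lambda+r^2)=\tfrac{2\mu}{3}r^2$ in Case~[b], and likewise verifies that the $k$-equation imposes no new constraints.
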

\begin{proof}
Since $m > 1$, from Lemma~\ref{lemma:LYg} we deduce that the first soliton condition in~\eqref{solitoncond} yields $b = \mu r$ for some constant $\mu$, and moreover that
$$ c h = \mu r h' \qquad \text{and} \qquad c k = \mu r k' + \mu k. $$
In case [a], substituting $h, k$ from~\eqref{eq:hk-functions} into the above and simplifying yields
$$ c (\lambda + r^2)^{\frac{1}{4}} = \frac{\mu}{2} r^2 (\lambda + r^2)^{-\frac{3}{4}}, \qquad c (\lambda + r^2)^{- \frac{1}{4}} = - \frac{\mu}{2} r^2 (\lambda + r^2)^{- \frac{5}{4}} + \mu (\lambda + r^2)^{- \frac{1}{4}}. $$
These in turn become
$$ c (\lambda + r^2) = \frac{\mu}{2} r^2, \qquad c (\lambda + r^2) = - \frac{\mu}{2} r^2 + \mu (\lambda + r^2). $$
We see immediately that if $\lambda > 0$, then $c = \mu = 0$. If $\lambda = 0$, both equations give $\mu = 2 c$.

In case [b], substituting $h, k$ from~\eqref{eq:hk-functions} into the above and simplifying yields
$$ c (\lambda + r^2)^{\frac{1}{3}} = \frac{2 \mu}{3} r^2 (\lambda + r^2)^{-\frac{2}{3}}, \qquad c (\lambda + r^2)^{- \frac{1}{6}} = - \frac{\mu}{3} r^2 (\lambda + r^2)^{- \frac{7}{6}} + \mu (\lambda + r^2)^{- \frac{1}{6}}. $$
These in turn become
$$ c (\lambda + r^2) = \frac{2 \mu}{3} r^2, \qquad c (\lambda + r^2) = - \frac{\mu}{3} r^2 + \mu (\lambda + r^2). $$
Again we see that if $\lambda > 0$, then $c = \mu = 0$. If $\lambda = 0$, both equations give $\mu = \frac{3}{2} c$.
\end{proof}

\begin{rmk} \label{rmk:BS-cones}
Recall that when $\lambda = 0$, the Bryant--Salamon torsion-free $\G$-manifolds are in fact \emph{$\G$-cones}, so they are a particular case of Section~\ref{NKsec}. However, the `radial' coordinate $r$ in the descriptions~\eqref{eq:ph-BS-a} and~\eqref{eq:ph-BS-b} of these $\G$-structures is \emph{not} equal to the \emph{cone parameter}. In fact, this radial coordinate is a reparametrization of the cone parameter. This is consistent with Proposition~\ref{prop:NK-solitons}, which shows that for an isometric soliton on a $\G$-cone with cone parameter $\rho$, the soliton vector field $Y = b(\rho) \frac{\partial}{\partial \rho}$ must satisfy $b = c\rho$. In particular, if we let the cone parameter $\rho = \rho(r)$ in the Bryant--Salamon case, in terms of the radial coordinate $r$, then we have
$$ c \rho \frac{\partial}{\partial \rho} = \mu c r \frac{\partial}{\partial r} $$
where $\mu = 2$ in Case [a] and $\mu = \frac{3}{2}$ in Case [b]. It follows that, up to a constant factor, $\rho = r^{\frac{1}{\mu}}$.
\end{rmk}

Because of Remark~\ref{rmk:BS-cones}, we henceforth assume $\lambda > 0$, so $Y = b \frac{\partial}{\partial r} = 0$ and $c = 0$ by Proposition~\ref{prop:BS-LYg}.

\begin{prop} \label{prop:BS-solitons}
Let $\phir$ be the the torsion-free $\G$-structure $\varphi_{\lambda}$ given by~\eqref{eq:ph-BS-a} on $M = \Lambda^2_- (B^4)$ in Case [a], or by~\eqref{eq:ph-BS-b} on $M = \slashed{S} (S^3)$ in Case [b], with metric $g$ given by~\eqref{eq:general-bundle-metric} with data given by~\eqref{eq:hk-functions}. Assume that $\lambda > 0$. Let $\phii$ be the isometric $\G$-structure given by~\eqref{isog2form}, and let $(\phii,Y,c)$ satisfy the isometric soliton system~\eqref{solitoncond} where $X$ and $Y$ are assumed to be of the form~\eqref{BS-symm}. Then $c = 0$ and $Y = 0$ by Proposition~\ref{prop:BS-LYg}. If $a(r),f(r)$ are given by the trigonometric substitution \eqref{BS-trigsub} 
then $u(r)$ is a solution of the following ODE:
\begin{equation} \label{eq:BS-soliton-ODE}
\begin{aligned}
&\text{\rm Case [a]:} & \quad 
r^2 u'' + \dfrac{4\lambda+7r^2}{2(\lambda + r^2)} r u' - \dfrac{4\lambda^2+4\lambda r^2+3r^4}{2(\lambda + r^2)^2} \sin u &=0,\\
& \text{\rm Case [b]:} & \quad 
r^2 u'' + \dfrac{9\lambda +13r^2}{3(\lambda+r^2)} r u' - \dfrac{9\lambda^2+12\lambda r^2+8r^4}{3(\lambda+r^2)^2} \sin u &=0. 
\end{aligned} 
\end{equation}
Moreover the torsion of $\phii$ satisfies
\begin{equation} \label{eq:BS-soliton-torsion}
\begin{aligned}
& \text{\rm Case [a]:} & \quad |T|^2 & = (\lambda+r^2)^{\frac{1}{2}} \left( (u')^2 + \dfrac{4 \lambda^2 + 4 \lambda r^2 + 3r^4}{ r^2 (\lambda + r^2)^2}(1 - \cos u) \right), \\
& \text{\rm Case [b]:} & \quad |T|^2 & = (\lambda+r^2)^{\frac{1}{3}} \left( \dfrac{(u')^2}{4} + \dfrac{9 \lambda^2 + 12 \lambda r^2 + 8r^4}{6 r^2 (r^2 + \lambda)^2} (1 - \cos u) \right).
\end{aligned}
\end{equation}
\end{prop}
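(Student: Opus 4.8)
Since we assume $\lambda > 0$, Proposition~\ref{prop:BS-LYg} already gives $c = 0$ and $Y = 0$, so $\curl_{\varphi} Y = 0$ and the isometric soliton system~\eqref{solitoncond} collapses to the single equation $\tdiv T = 0$. The plan is to compute $\tdiv T$ explicitly for $\varphi = \varphi(f, X)$ with $X = a(r)\tfrac{\partial}{\partial r}$ and $f = f(r)$, using the formulas of Section~\ref{Laplacian-sec} together with the explicit Bryant--Salamon data~\eqref{eq:hk-functions}, and then impose $\tdiv T = 0$.

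Since $g_\lambda$ has holonomy $\G$ it is Ricci-flat, so Corollaries~\ref{cor:general-Laplacian},~\ref{cor:general-Laplacian-1}, and~\ref{cor:general-normsq-nablaX} all apply, with $(n, m, h, k)$ as in~\eqref{eq:hk-functions}. First I would note that Corollary~\ref{cor:general-Laplacian-1} exhibits $\Delta X$ as a scalar function times $\tfrac{\partial}{\partial r}$, hence parallel to $X$; so Corollary~\ref{cor:T-phidropout}(b) gives $(\tdiv T)_q = 2(\Delta f)X_q - 2 f(\Delta X)_q$. Since $(\tfrac{\partial}{\partial r})^\flat = k^2\,dr$ by~\eqref{eq:general-rdr} and~\eqref{eq:gradr}, both $X^\flat$ and $(\Delta X)^\flat$ are multiples of $dr$, so after dividing by $2k^2$ the equation $\tdiv T = 0$ becomes the single second-order scalar ODE $a\,\Delta f = f\,B$, where $\Delta f$ is given by Corollary~\ref{cor:general-Laplacian} with $s = f$ and $B$ is the bracket of Corollary~\ref{cor:general-Laplacian-1} with $s = a$. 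The Bryant constraint $f^2 + |X|^2 = 1$ reads $f^2 + k^2 a^2 = 1$ here, by~\eqref{eq:normdbydr}, and this is precisely what the parametrization~\eqref{BS-trigsub} encodes, since then $k^2 a^2 = \sin^2(\tfrac12 u)$ and $f^2 = \cos^2(\tfrac12 u)$.

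Next I would substitute the explicit data~\eqref{eq:hk-functions} (with $(n,m) = (4,3)$ in Case [a] and $(3,4)$ in Case [b]), writing $\rho = \lambda + r^2$ so that $h, k$ and their first two derivatives become explicit functions of $r$ and powers of $\rho$ (and in Case [a] the product $hk$ is the constant $\sqrt2$, which shortens the bookkeeping), and then insert~\eqref{BS-trigsub} directly into $a\,\Delta f - f\,B = 0$, using $a' = -k'k^{-2}\sin(\tfrac12 u) + \tfrac12 k^{-1}\cos(\tfrac12 u)u'$, the corresponding expression for $a''$, and $f' = -\tfrac12\sin(\tfrac12 u)u'$, $f'' = -\tfrac14\cos(\tfrac12 u)(u')^2 - \tfrac12\sin(\tfrac12 u)u''$. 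Just as in the Euclidean case, where the identities~\eqref{firstfhrule}--\eqref{secondfhrule} were used to pass from the $af'' - a''f$ form of $\tdiv T$ to~\eqref{hradialOdeSecond} and then~\eqref{firstradialODE}, the Pythagorean identity makes the coefficient of $u''$ free of $\sin(\tfrac12 u)$ and $\cos(\tfrac12 u)$ and all $(u')^2$ terms cancel; using $2\sin(\tfrac12 u)\cos(\tfrac12 u) = \sin u$ and multiplying by the appropriate power of $\rho$ then puts the equation into the stated form~\eqref{eq:BS-soliton-ODE}, case by case. For the torsion I would use Proposition~\ref{prop:T-formula}, $\tfrac14|T|^2 = |\nabla X|^2 + |\nabla f|^2$, with $|\nabla f|^2 = (f')^2/k^2$ (from~\eqref{eq:nablas} with $s = f$) and $|\nabla X|^2$ from Corollary~\ref{cor:general-normsq-nablaX}; substituting~\eqref{eq:hk-functions}, then~\eqref{BS-trigsub}, and rewriting $\sin^2(\tfrac12 u) = \tfrac12(1 - \cos u)$ yields~\eqref{eq:BS-soliton-torsion}.

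The only real obstacle is the bookkeeping in the last two paragraphs: substituting the fractional powers $h$ and $k$ and their derivatives, together with the $k$-dependent change of variable~\eqref{BS-trigsub}, into the general Laplacian and $|\nabla X|^2$ formulas of Section~\ref{Laplacian-sec}, and then simplifying the resulting rational functions of $r$ and $\lambda$ so as to land exactly on the coefficients $\tfrac{4\lambda + 7r^2}{2(\lambda + r^2)}$, $\tfrac{4\lambda^2 + 4\lambda r^2 + 3r^4}{2(\lambda + r^2)^2}$ and their Case [b] analogues. This is lengthy but entirely mechanical given Section~\ref{Laplacian-sec}, and the two cases proceed in exactly the same way.
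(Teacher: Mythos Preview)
Your proposal is correct and follows essentially the same approach as the paper: reduce to $\tdiv T = 0$ via Proposition~\ref{prop:BS-LYg}, use Corollary~\ref{cor:T-phidropout}(b) together with Corollaries~\ref{cor:general-Laplacian} and~\ref{cor:general-Laplacian-1} to express $\tdiv T$ as a scalar multiple of $\tfrac{\partial}{\partial r}$, apply the trigonometric substitution~\eqref{BS-trigsub}, and then specialize to the data~\eqref{eq:hk-functions}; for $|T|^2$ you invoke Proposition~\ref{prop:T-formula} and Corollary~\ref{cor:general-normsq-nablaX} exactly as the paper does. The only tactical difference is that the paper performs the trigonometric substitution \emph{before} specializing $h,k$, obtaining a single general $u$-equation and then plugging in the two cases, whereas you propose specializing first and then substituting; the paper's order makes the cancellation of the $(u')^2$ terms and the emergence of $\sin u$ manifest once for both cases rather than twice.
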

\begin{proof}
Since $Y = 0$, the second soliton condition in~\eqref{solitoncond} simplifies to $\tdiv T=0$. Because in both cases, $\Delta X$ is a multiple of $X$, by Corollary~\ref{cor:T-phidropout} we have
$$ (\tdiv T)^{\sharp} = 2(\Delta f) X - 2 f \Delta X. $$
Substituting in for $\Delta f$ and $\Delta X$ from Corollaries~\ref{cor:general-Laplacian} and~\ref{cor:general-Laplacian-1}, we have
\begin{align*}
(\tdiv T)^{\sharp} & = \dfrac2{k^2}\left[ a f'' + \left( \dfrac{m-1} r + n \dfrac{h'}{h} + (m-2) \dfrac{k'}{k} \right) a f' \right. \\ 
& \qquad {} - \left. \dfrac{2}{k^2} \left( f a'' + \left( \dfrac{m-1}{r} + n \dfrac{h'}{h} + m \dfrac{k'}{k} \right) f a' + \left( - \dfrac{m-1}{r^2} + n \left( \dfrac{h'}{h} \right)' + m \left( \dfrac{k'}{k} \right)' \right) a f \right) \right] \dib{r}
\end{align*}
which we rewrite as
\begin{align}
(\tdiv T)^{\sharp} & = \dfrac2{k^2}\left[ a f'' - f a'' - 2 \dfrac{k'}{k} a f' + \left( \dfrac{m-1}{r} + n \dfrac{h'} {h} + m \dfrac{k'}{k} \right) (a f' - f a') \right]\dib{r} \notag \\
& \qquad {} - \dfrac2{k^2}\left[ \left( - \dfrac{m-1}{r^2} + n \left( \dfrac{h'}{h} \right)' + m \left( \dfrac{k'}{k} \right)' \right) a f \right]\dib{r}. \label{BSfirstdivT}
\end{align}
Working with the trigonometric substitution~\eqref{BS-trigsub} yields $a f = \dfrac1{2k} \sin u$ and
\begin{align*}
a f' - f a' = & - \dfrac{1}{2k} u' + \dfrac{k'}{2k^2} \sin u, \\
a f'' - f a'' - 2 \dfrac{k'}{k} a f' = & \dfrac1{2k} \left( - u'' + 2 \dfrac{k'}{k} u' + \left( \dfrac{k''}{k} - 2 \dfrac{(k')^2}{k^2} \right) \sin u \right).
\end{align*}
Substituting these into~\eqref{BSfirstdivT} and simplifying gives
\begin{align*}
(\tdiv T)^{\sharp} & = -\dfrac{1}{k^3} \Bigg[ u'' - 2 \dfrac{k'}{k} u' - \left( \dfrac{k''}{k} - 2 \dfrac{(k')^2}{k^2} \right) \sin u + \left( \dfrac{m-1}{r} + n \dfrac{h'}{h} + m \dfrac{k'}{k} \right) \left( u' - \dfrac{k'}{k} \sin u \right) \\
& \qquad {} \qquad + \left( - \dfrac{m-1}{r^2} + n \left( \dfrac{h'}{h} \right)' + m \left( \dfrac{k'}{k} \right)' \right) \sin u \Bigg] \dib{r}.
\end{align*}
Substituting in the data from~\eqref{eq:hk-functions} for Case [a], further simplification yields
\begin{align*}
(\tdiv T)^\sharp & = - \dfrac{1}{k^3} \Bigg[ u'' + \dfrac{r}{\lambda+ r^2} u' + \dfrac{2 \lambda - r^2}{4 (\lambda+r^2)^2} \sin u + \dfrac{4 \lambda + 5 r^2}{2 r (\lambda+r^2)} \left( u' + \dfrac{r}{2 (\lambda + r^2)} \sin u \right) \\
& \qquad \qquad {} - \left( \dfrac{4 \lambda^2 + 7 \lambda r^2 + 5 r^4}{ 2r^2 (\lambda + r^2)^2} \right) \sin u \Bigg] \dib{r} \\
& = - \dfrac{1}{k^3} \left[ u'' + \dfrac{4 \lambda + 7 r^2}{2 r (\lambda + r^2)} u' - \left( \dfrac{4 \lambda^2 + 4 \lambda r^2 + 3 r^4}{2 r^2 (\lambda + r^2)^2} \right) \sin u \right] \dib{r}.
\end{align*}
Similarly, for Case [b] we compute that
\begin{align*}
(\tdiv T)^{\sharp} & = - \dfrac{1}{k^3} \Bigg[ u'' + \dfrac{2 r}{3 (\lambda + r^2)} u' + \dfrac{3 \lambda - 2 r^2}{9 (\lambda + r^2)^2} \sin u + \dfrac{9 \lambda + 11 r^2}{3 r (\lambda + r^2)} \left( u' + \dfrac{r}{3 (\lambda + r^2)} \sin u \right) \\
& \qquad \qquad {} - \left( \dfrac{9 \lambda^2 +16 \lambda r^2 + 11 r^4}{3 r^2 (\lambda + r^2)^2} \right) \sin u \Bigg] \dib{r} \\
& = - \dfrac{1}{k^3} \left[ u'' + \dfrac{9 \lambda + 13 r^2}{3 r (\lambda + r^2)} u' - \left( \dfrac{9 \lambda^2 + 12 \lambda r^2 + 8 r^4}{3 r^2 (\lambda + r^2)^2} \right) \sin u \right] \dib{r}.
\end{align*}
Setting each of these expressions equal to zero and clearing some denominators yields the ordinary differential equations in~\eqref{eq:BS-soliton-ODE}.

Recall from Proposition~\ref{prop:T-formula} that $\tfrac{1}{4} |T|^2 = |\nabla f|^2 + |\nabla X|^2$. From~\eqref{eq:nablas} and~\eqref{eq:normdbydr} we have
$$ \nabla f = \dfrac{f'}{k^2} \dib{r} \qquad \text{and hence} \qquad |\nabla f|^2 = \dfrac{(f')^2}{k^4} \left| \dib{r} \right|^2 = \dfrac{(f')^2}{k^2}. $$
The result of Corollary~\ref{cor:general-normsq-nablaX} can be rearranged as
$$ |\nabla X|^2 = \left( a' + a \dfrac{k'}{k} \right)^2 + (m-1) a^2 \left( \dfrac{1}{r^2} + \dfrac{k'}{rk} + 2 \dfrac{(k')^2}{k^2} - \dfrac{k''}{k} \right) + n a^2 \left( \dfrac{h'}{h} \dfrac{k'}{k} + \dfrac{(h')^2}{h^2} - \dfrac{h''}{h}\right). $$
Using the trigonometric substitution~\eqref{BS-trigsub} and the above two expressions, we obtain
\begin{align*}
\tfrac{1}{4} |T|^2 & = |\nabla f|^2 + |\nabla X|^2 \\
& = \dfrac{(u')^2}{4 k^2} + (m-1) \dfrac{(1 - \cos u)}{2k^2} \left( \dfrac{1}{r^2} + \dfrac{k'}{r k} + 2 \dfrac{(k')^2}{k^2} - \dfrac{k''}{k} \right) + n \dfrac{(1 - \cos u)}{2k^2} \left( \dfrac{h'}{h} \dfrac{k'}{k} + \dfrac{(h')^2}{h^2} - \dfrac{h''}{h} \right).
\end{align*}
Substituting in the data from \eqref{eq:hk-functions} in each case yields the expressions~\eqref{eq:BS-soliton-torsion} for $|T|^2$.
\end{proof}

In Section~\ref{ODEsec} we analyze the solutions of ODEs in a class including \eqref{eq:BS-soliton-ODE}, allowing us to state the following conclusions:

\begin{thm} \label{thm:BS-solitons-final}
There exist smooth solutions to \eqref{eq:BS-soliton-ODE} which are odd functions of $r$ defined for all $r \in \R$. Thus, the steady isometric soliton $\varphi$, where $\varphi$ is as in Proposition~\ref{prop:BS-solitons}, is smooth on $M$. Moreover, the torsion of $\varphi$ satisfies $\lim_{r \to \infty} |T|^2 = 0$.
\end{thm}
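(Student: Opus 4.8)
The strategy is to reduce the statement to the general ODE analysis of Section~\ref{ODEsec} and then to read off the conclusions from the torsion formulas of Proposition~\ref{prop:BS-solitons}. Write each equation of~\eqref{eq:BS-soliton-ODE} in the form $r^2 u'' + P(r)\, r u' - Q(r)\sin u = 0$. In both cases $P$ and $Q$ are rational functions that are even in $r$, whose only poles lie at $r^2 = -\lambda$, hence real-analytic on $[0,\infty)$ since $\lambda > 0$; moreover $P(0) = Q(0)$ (equal to $2$ in Case~[a] and $3$ in Case~[b]), so the indicial equation $s^2 + (P(0)-1)s - Q(0) = 0$ at the regular singular point $r=0$ has roots $1$ and $-P(0) < 0$; and $P(r), Q(r)$ tend to finite limits $(P_\infty, Q_\infty) = (\tfrac72, \tfrac32)$ in Case~[a] and $(\tfrac{13}{3}, \tfrac83)$ in Case~[b] as $r\to\infty$. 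Thus both equations belong to the class treated in Section~\ref{ODEsec}, and the existence result there produces, for each case, a one-parameter family (parametrized by $u'(0)$) of solutions $u(r)$ that are real-analytic, odd in $r$, and defined for all $r\in\R$ (the extension to $r\to\infty$ uses that $\sin u$ is bounded, so there is no finite-$r$ blow-up).

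Next I would translate this back to the geometric data. Given such an odd analytic $u$, set $a = \tfrac1k\sin(\tfrac12 u)$ and $f = \cos(\tfrac12 u)$ as in~\eqref{BS-trigsub}. Since $u$ is odd and $k$ from~\eqref{eq:hk-functions} is a smooth, positive, even function of $r$, the function $\sin(\tfrac12 u(r))$ is smooth and odd, hence of the form $r\cdot(\text{smooth even})$, so $a/r$ is smooth and even; as $r\,\partial_r$ is the smooth Euler vector field of the fibres (vanishing on the zero section), $X = a\,\partial_r = (a/r)(r\,\partial_r)$ is a smooth vector field on $M$, and $f$ is a smooth function on $M$. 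One checks $f^2 + |X|^2 = \cos^2(\tfrac12 u) + k^2 a^2 = 1$, so $\varphi = \varphi(f,X)$ of~\eqref{isog2form} is a well-defined smooth $\G$-structure on $M$, and by Proposition~\ref{prop:BS-solitons} the triple $(\varphi,0,0)$ solves the soliton system~\eqref{solitoncond}; this is the asserted smooth steady isometric soliton.

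Finally, for the decay of the torsion I would feed the asymptotics of $u$ as $r\to\infty$ into~\eqref{eq:BS-soliton-torsion}. In Case~[a] the coefficient of $(1-\cos u)$ there is $O(r^{-2})$ and $0 \le 1-\cos u \le 2$, so that contribution to $|T|^2$ is $O(r^{-2})$ times the prefactor $(\lambda+r^2)^{1/2} = O(r)$, hence $O(r^{-1})\to 0$; there remains $(\lambda+r^2)^{1/2}(u')^2 \sim r(u')^2$, and likewise in Case~[b] one is left with $(\lambda+r^2)^{1/3}(u')^2 \sim r^{2/3}(u')^2$. So it suffices to know $u'(r) = o(r^{-1/2})$ in Case~[a] and $u'(r) = o(r^{-1/3})$ in Case~[b]. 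This is exactly the content of the asymptotic analysis in Section~\ref{ODEsec}: the substitution $t = \log r$ converts the equation into a damped-pendulum equation $\ddot u + (P_\infty - 1)\dot u - Q_\infty\sin u = (\text{terms exponentially small in } t)$ with positive damping $P_\infty - 1 > 0$; an energy/LaSalle argument shows $u$ converges to a zero of $\sin u$, near which the decay is governed by the roots of $\mu^2 + (P_\infty-1)\mu + Q_\infty = 0$ — namely $\{-1,-\tfrac32\}$ in Case~[a] and $\{-\tfrac43,-2\}$ in Case~[b] — which yields $u'(r) = O(r^{-2})$ in Case~[a] and $O(r^{-7/3})$ in Case~[b], comfortably enough to force $|T|^2\to 0$. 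The main obstacle is precisely this last step: showing rigorously that a solution actually settles near a stable equilibrium of the pendulum and then controlling its decay rate in the presence of the non-autonomous correction terms; once that analysis (carried out in Section~\ref{ODEsec}) is granted, the preceding reductions are routine.
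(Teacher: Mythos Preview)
Your existence and smoothness arguments match the paper's: both ODEs fit the class~\eqref{ODEclass} of Section~\ref{ODEsec} (with $b=1/\lambda$, $d=P(0)=Q(0)$, and $c<0$), so the local existence theorem near the regular singular point together with the global extension of Section~\ref{sec:globalexist} give the odd analytic solutions; your observation that $X=(a/r)\,r\partial_r$ extends smoothly across the zero section is precisely how smoothness of $\varphi$ is obtained.

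For the torsion decay, however, your sketch and the paper take different routes. You propose a damped-pendulum/LaSalle analysis: pass to $t=\log r$, argue that the trajectory converges to a stable equilibrium of the limiting autonomous pendulum, and then linearize there to extract decay rates $u'=O(r^{-2})$ (Case~[a]) or $O(r^{-7/3})$ (Case~[b]). That is \emph{not} what Section~\ref{sec:torlimit} actually does. The paper instead introduces the modified energy $L=\tfrac12 A^2 z^2 + B\cos u$ (with $A=1+br^2$, $B=d+er^2+fr^4$, $z=ru'$), computes $dL/dx$ directly, and via two short lemmas---using crucially that $1+c/b<-2$ in both Bryant--Salamon cases---shows only that $z^2$ is \emph{bounded} for large $r$. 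Boundedness of $z=ru'$, i.e.\ $u'=O(1/r)$, already forces $(\lambda+r^2)^{1/2}(u')^2=O(1/r)$ and $(\lambda+r^2)^{1/3}(u')^2=O(r^{-4/3})$, so both terms in~\eqref{eq:BS-soliton-torsion} tend to zero without any knowledge of where $u$ itself goes.

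Your approach would yield sharper information (an explicit limit for $u$ and a decay rate), but at a cost you correctly flag as the ``main obstacle'': one must rule out indefinite winding, make the LaSalle argument robust to the exponentially small non-autonomous correction, and deal with the possibility of approaching a saddle equilibrium $u_*\in 2\pi\Z$. None of that is needed for the theorem as stated, and the paper's energy argument sidesteps it entirely.
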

\begin{proof}
Global existence of odd ODE solutions $u(r)$ is established in Section~\ref{sec:globalexist}. Then, since $X$ given by~\eqref{BS-symm} and $f(r) = \cos( \frac{1}{2} u(r))$ are smooth on $M$, we deduce that $\varphi_{\lambda}$ is smooth. The asymptotic behavior of $|T|$ is established in the last part of Section~\ref{sec:torlimit}.
\end{proof}

\section{Existence and behavior of ODE solutions} \label{ODEsec}

In this section, we establish local existence of solutions (in a neighbourhood of the regular singular point $r=0$) for a class of ordinary differential equations of the form
\begin{equation} \label{ODEclass}
r^2 u'' + \dfrac{d-c r^2}{1+br^2} r u' - \dfrac{d+er^2+f r^4}{(1+b r^2)^2} \sin u = 0,
\end{equation}
where $b,c,d,e,f$ are constants with $d>0$ and $b,e,f\ge 0$. This class includes the
ODE~\eqref{firstradialODE} for $b=e=f=0$ and $d=6$, as well as the ODEs~\eqref{eq:BS-soliton-ODE} describing radially symmetric soliton metrics on Bryant--Salamon backgrounds. (We note for use below that $f > 0$ in the Bryant--Salamon cases.) We also establish global existence of solutions, and investigate their asymptotic behavior as $r \to +\infty$.

To see that $r=0$ qualifies as a regular singular point for this nonlinear equation, one can rewrite the ODE as a first-order system. Letting $z=r u'$, hence $z' = u' + ru''$, we obtain
\begin{equation} \label{singularsys}
\begin{aligned}
r u' &= z, \\
r z' & = \left( 1 - \dfrac{d-c r^2}{1+br^2}\right) z +\dfrac{d+er^2+f r^4}{(1+b r^2)^2} \sin u.
\end{aligned}
\end{equation}
Because the right-hand sides are analytic near $r=0$, this meets the definition of regular singular point for a nonlinear system. (See Wasow~\cite[page 200]{wasow}, where these are also called {\em singularities of the first kind}, the term also used by Hsieh--Sibuya~\cite{hsisub}.)

\subsection{Local existence}
\begin{thm} There exists a 1-parameter family of analytic odd solutions to~\eqref{ODEclass}, each analytic on an open set containing $r=0$, and parametrized by the value of $u'(0)$.
\end{thm}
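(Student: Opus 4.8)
The plan is to analyse the equation through the classical theory of singularities of the first kind, applied to the companion system~\eqref{singularsys}. Writing $w = (u, z)$, that system has the form $r w' = F(r, w)$ with $F$ analytic near $(r, w) = (0, 0)$, with $F(0, 0) = 0$, and moreover with $F(r, 0) = 0$ for all $r$ (each component of~\eqref{singularsys} is linear in $z$ or a multiple of $\sin u$). Any solution analytic at $r = 0$ must satisfy $F(0, w(0)) = 0$, which forces $z(0) = 0$ and $\sin u(0) = 0$, so $u(0) \in \pi\Z$; we focus on the branch with $w(0) = 0$, the one consistent with oddness. The relevant linear data is $A := D_w F(0, 0)$, obtained by replacing $\sin u$ by $u$ and setting $r = 0$:
\[
A = \begin{pmatrix} 0 & 1 \\ d & 1 - d \end{pmatrix},
\]
with characteristic polynomial $\lambda^2 + (d - 1)\lambda - d = (\lambda - 1)(\lambda + d)$, so the eigenvalues (equivalently, the indicial roots of the linearisation $r^2 u'' + d r u' - d u = 0$) are $\lambda_1 = 1$ and $\lambda_2 = -d$.

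Next I would construct a formal power series solution $w(r) = \sum_{n \geq 1} w_n r^n$. Collecting powers of $r$ gives at order $1$ the homogeneous equation $(I - A)w_1 = 0$, whose solutions form the line $\R\cdot(1,1)$; the free parameter is exactly $t := u'(0)$, since $z = r u'$ forces $w_1 = (t, t)$. At each order $n \geq 2$ one obtains $(nI - A) w_n = P_n(w_1, \dots, w_{n-1})$, where $P_n$ is a universal polynomial in $w_1, \dots, w_{n-1}$ built from the Taylor coefficients of $F$ (those of $\sin u$ and of the even rational functions of $r$ appearing in~\eqref{ODEclass}). Because the eigenvalues of $A$ are $1$ and $-d < 0$, the matrix $nI - A$ is invertible for every $n \geq 2$, so $w_n$ is uniquely determined; hence for each value of $t$ there is a unique formal solution, and distinct values of $t$ give solutions differing already at first order.

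Convergence I would then establish by the method of majorants, as in the standard proof of analytic existence at a singularity of the first kind (Wasow, Hsieh--Sibuya). It is cleanest to pass to the shifted unknown $\widetilde w(r) := w(r) - t r (1,1)$: since $A(1,1) = (1,1)$ and $F(r, 0) = 0$, one finds $r\widetilde w' = A \widetilde w + H(r, \widetilde w)$ with $H$ analytic, $H(r, 0) = O(r^2)$, and $\widetilde w = O(r^2)$, so that \emph{no} order $n \geq 2$ is resonant. Bounding the coefficients of $H$ by those of a geometric majorant and using the decay $\|(nI - A)^{-1}\| = O(1/n)$, one obtains $|\widetilde w_n| \leq C M^n$, hence convergence on $\{|r| < 1/M\}$ and an analytic solution on an open interval about $r = 0$ with $u(0) = 0$, $u'(0) = t$. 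Oddness then comes from a symmetry of~\eqref{ODEclass}: the coefficient functions are even in $r$, so if $u(r)$ is a solution then so is $\bar u(r) := -u(-r)$ — the sign change from $r \mapsto -r$ in the first-derivative term and the identity $\sin(-u) = -\sin u$ combine with $u \mapsto -u$ — and $\bar u'(0) = u'(0) = t$; by the uniqueness just proved, $u(r) = -u(-r)$, i.e. $u$ is odd (equivalently, the recursion forces $w_{2j} = 0$).

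The only genuine obstacle is the resonance caused by $\lambda_1 = 1$ being a positive integer, which means the off-the-shelf ``no positive-integer eigenvalue'' form of the analytic existence theorem does not apply verbatim. The resolution, which must be checked carefully, is that this resonance sits at the lowest order $n = 1$, where the compatibility equation $(I-A)w_1 = 0$ is homogeneous and therefore automatically solvable, producing a one-parameter family rather than an obstruction; once $w_1 = t(1,1)$ is fixed and one shifts as above, the remaining problem is strictly non-resonant and the majorant estimates go through unchanged. One should also note that the other eigenvalue $-d$ is never a non-negative integer, since $d > 0$, so it yields no competing analytic branch through $w(0)=0$.
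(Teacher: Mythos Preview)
Your argument is correct and follows essentially the same strategy as the paper: construct a formal power series solution about the regular singular point $r=0$ and then invoke the convergence theory for singularities of the first kind. The organization differs, however. The paper works directly with the scalar equation, substituting $u=\sum_{k\ge 1} a_k r^k$ (together with a short lemma describing the coefficients of $\sin u$), derives an explicit recurrence $(k-1)(k+d)a_k = \cdots$, and then cites Hsieh--Sibuya for convergence; oddness is obtained by induction on the even coefficients using this recurrence. You instead work at the level of the companion system, identify the indicial eigenvalues $1$ and $-d$ from $A=D_wF(0,0)$, treat the $n=1$ resonance explicitly (and correctly observe that it is harmless because the first-order equation is homogeneous), and propose the majorant argument directly after the shift $\widetilde w = w - tr(1,1)$. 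Your symmetry argument for oddness---that $-u(-r)$ is again a solution with the same initial slope, hence equals $u$ by uniqueness---is a cleaner replacement for the paper's coefficient-by-coefficient check. Both routes yield the same conclusion; yours makes the structural role of the eigenvalue $1$ more transparent, while the paper's hands-on recurrence makes the dependence on the parameters $b,c,d,e,f$ more explicit.
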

\begin{proof}
We will prove that there exist formal power series solutions $u=\sum_{k=1}^{\infty} a_k r^k$ to~\eqref{ODEclass} with $a_1\ne 0$ and which contain only odd powers of $r$. Then it will follow by Hsieh--Sibuya~\cite[Theorem V-2-7]{hsisub} that these series converge to a solution on some open interval around $r=0$. The presence of the nonlinear factor $\sin u$ is handled by the following:

\begin{lemma} \label{sinelemma}
If $u=\sum_{k=1}^{\infty} a_k r^k$ is an analytic function, then for $k>2$ there there are polynomial functions $T_k(a_1, \ldots, a_{k-2})$ such that
$$ \sin u = \sum_{k=1}^{\infty} s_k r^k, $$
where $s_k = a_k + T_k$ and $T_1 = T_2 = 0$. Moreover, when $a_2 = a_4 = \cdots = a_{2j} = 0$, then $T_{2j + 2} = 0$.
\end{lemma}
This lemma will be proved below, after the proof of the theorem is complete.

Substituting $u=\sum_{k=1}^{\infty} a_k r^k$ into the ODE~\eqref{ODEclass} and clearing denominators yields
\begin{align*}
0 & = (1+b r^2)^2 \sum_{k=1}^{\infty} k(k-1) a_k r^k + (1+br^2)(d-cr^2) \sum_{k=1}^{\infty} k a_k r^k - (d+er^2+f r^4) \sum_{k=1}^{\infty} s_k r^k \\
& = \sum_{k=1}^{\infty} \Big[ \left[ (k+d)(k-1) a_k - d T_k\right] r^k + \left[ (2bk(k-1) + (bd-c)k-e) a_k -e T_k\right] r^{k+2} \\
& \qquad \qquad {} + \left[ (b^2 k(k-1) -bck-f) a_k - f T_k\right] r^{k+4} \Big].
\end{align*}

In this sum, the coefficient of $r^1$ vanishes (since $T_1=0$), while setting the coefficient of $r^2$
equal to zero gives $0 = (2+d) a_2$ (since $T_2=0$), implying that any series solution must have $a_2=0$. The coefficients of $r^3$ and $r^4$ yield the equations
\begin{equation} \label{a4qn}
\begin{aligned}
0 &= 2(d+3)a_3 - d T_3 + (bd-c-e) a_1, \\
0 &= 3(d+4)a_4 - d T_4 + (4b+2(bd-c)-e)a_2,
\end{aligned}
\end{equation}
and in general for $k>4$, we have the recurrence relation
\begin{align} \nonumber
& (k-1)(k+d) a_k + (2b(k-2)(k-3) +(bd-c)(k-2)-e) a_{k-2} + (b^2 (k-4)(k-5) -bc(k-4)-f) a_{k-4} \\ \label{arecurrence}
& \qquad = d T_k + e T_{k-2} +f T_{k-4}.
\end{align}
Recalling that $T_k = T_k(a_1, \ldots, a_{k-2})$, we see that $a_k$ for $k\ge 3$ is uniquely determined by $a_1, \ldots, a_{k-2}$.

By Lemma~\ref{sinelemma}, since $a_2=0$ then $T_4=0$ and hence~\eqref{a4qn} gives $a_4=0$. More generally, if $a_2 = a_4 = \ldots = a_{2j}=0$ for some $j>1$, then Lemma~\ref{sinelemma} implies that $T_2 = \ldots = T_{2j+2}=0$, so the recurrence relation~\eqref{arecurrence} for $k=2j+2$ implies that $a_{2j+2}=0$.
\end{proof}

\begin{proof}[Proof of Lemma~\ref{sinelemma}]
Substituting the series for $u$ into the Maclaurin series for the sine function gives 
$$ \sin u = \sum_{n=0}^{\infty} \dfrac{(-1)^n}{(2n+1)!} \left(\sum_{m=1}^{\infty} a_m r^m\right)^{2n+1}. $$
Since each factor $\sum_{m=1}^{\infty} a_m r^m$ has degree at least one in $r$, we observe that for a fixed $k$ the coefficient of $r^k$ on the right-hand side only involves the $a_m$ for which $m+2n \le k$. Noting the special form for the $n=0$ term, we can write
$$ \sum_{k=1}^{\infty} s_k r^k = \sum_{k=1}^{\infty} a_k r^k + \sum_{n=1}^{\infty} \dfrac{(-1)^n}{(2n+1)!} \left(\sum_{m=1}^{\infty} a_m r^m\right)^{2n+1}. $$
Due to our observation, and $m+2n\le k$ implying $2n\le k-1$, we have
\begin{equation} \label{sinecoeffs}
s_k = a_k + \, \, \text{the $r^k$ coefficient in} \, \, \sum_{n=1}^{\lfloor (k-1)/2\rfloor} \dfrac{(-1)^n}{(2n+1)!} \left(\sum_{m=1 }^{k-2n} a_m r^m\right)^{2n+1}.
\end{equation}
In the case $k=1$ and $k=2$, the second term is zero, so $s_1=a_1$ and $s_2=a_2$. In general, since in the second term we take the $r^k$ coefficient of a finite degree polynomial in $r$, whose coefficients are polynomial in the $a_m$ for $m \le k-2n \le k-2$, then 
$$ s_k = a_k + T_k(a_1, \ldots, a_{k-2}), $$
as claimed.

Lastly, if $k=2j+2$ for $j\ge 1$ and $a_2 = \ldots = a_{2j}=0$, then only odd powers of $r$ are present in the second term of~\eqref{sinecoeffs}, and so $T_{2j+2}=0$.
\end{proof}

\subsection{Global existence}\label{sec:globalexist}

Away from the regular singular point, we can also establish existence of solutions to~\eqref{ODEclass} defined for all positive values of $r$. If we make the substitution $r = \re^x$, the ODE becomes
\begin{equation} \label{ODEclassx}
\dfrac{d^2 u}{dx^2} + (P(\re^x)-1) \dfrac{du}{dx} -Q(\re^x) \sin u = 0, \qquad \text{where }
P(r) = \dfrac{d-c r^2}{1+br^2}, \ Q(r) = \dfrac{d+e r^2+f r^4}{(1+br^2)^2}.
\end{equation}
In order to apply standard existence theorems, we rewrite this as a system in vector form $d \vu / dx = \vF (\vu,x)$, where $\vu = (u, z)^t$ and
\begin{equation} \label{system-eq}
\vF(\vu,x) = \begin{pmatrix} z \\ Q(\re^x) \sin u + (1-P(\re^x)) z \end{pmatrix}.
\end{equation}
The following result is easily checked:

\begin{lemma}
Let $I$ be any bounded open interval in $\R$, and let $D = \R^2$. Then $\vF$ satisfies a Lipschitz estimate on $D \times I$. That is, there is a constant $L$ such that
for any $\vu_1, \vu_2\in D$ and $x \in I$, we have
$$ |\vF(\vu_1,x) - \vF(\vu_2,x)| \le L |\vu_1-\vu_2|. $$
\end{lemma}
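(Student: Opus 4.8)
The plan is to exploit the very special structure of $\vF$: the only nonlinearity is the globally Lipschitz function $\sin$, and all other $\vu$-dependence is affine. First I would record that $P(r) = (d-cr^2)/(1+br^2)$ and $Q(r) = (d+er^2+fr^4)/(1+br^2)^2$ are continuous on $[0,\infty)$, since the denominators satisfy $1+br^2 \ge 1 > 0$ because $b \ge 0$. As $I$ is a bounded interval, its closure $\overline I$ is compact, and $\{\re^x : x \in \overline I\}$ is a compact subset of $(0,\infty)$; hence $P(\re^x)$ and $Q(\re^x)$ are bounded for $x \in I$. Set $M := \sup_{x \in I}\bigl(|Q(\re^x)| + |1 - P(\re^x)|\bigr) < \infty$.

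Next, writing $\vu_1 = (u_1,z_1)^t$ and $\vu_2 = (u_2,z_2)^t$, I would estimate the two components of $\vF(\vu_1,x) - \vF(\vu_2,x)$ separately. The first component is $z_1 - z_2$, whose absolute value is at most $|\vu_1 - \vu_2|$. For the second component, namely $Q(\re^x)(\sin u_1 - \sin u_2) + (1-P(\re^x))(z_1 - z_2)$, I would invoke the elementary bound $|\sin u_1 - \sin u_2| \le |u_1 - u_2|$ (the mean value theorem together with $|\cos| \le 1$) — this is the one essential point, since it is precisely what allows $D$ to be all of $\R^2$ rather than a bounded domain — and combine it with the boundedness of $P$ and $Q$ on $I$ to bound the second component by $M\,|\vu_1 - \vu_2|$. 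Putting the two estimates together yields $|\vF(\vu_1,x) - \vF(\vu_2,x)| \le L\,|\vu_1 - \vu_2|$ with $L = 1 + M$ (or $L = \sqrt{1 + M^2}$, depending on the chosen norm on $\R^2$), uniformly in $x \in I$.

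I do not anticipate any real obstacle here: the lemma holds essentially by inspection once one observes that the right-hand side is affine in $z$ with coefficients bounded on the bounded $x$-interval, and depends on $u$ only through $\sin u$. The only subtlety worth flagging is that the Lipschitz constant must be uniform over the \emph{unbounded} set $D = \R^2$, which is exactly what the global Lipschitz property of $\sin$ (with constant $1$) provides; the boundedness of $I$ is used solely to keep $P(\re^x)$ and $Q(\re^x)$ bounded.
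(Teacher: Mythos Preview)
Your argument is correct and is exactly the kind of direct verification the paper has in mind; the paper itself omits the proof entirely, simply noting that the result ``is easily checked.'' Your write-up supplies precisely the details one would fill in: boundedness of $P(\re^x)$ and $Q(\re^x)$ on the compact set $\overline I$, together with the global Lipschitz bound $|\sin u_1 - \sin u_2|\le |u_1-u_2|$, which is the key point allowing $D=\R^2$.
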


It now follows from~\cite[Theorem 1.7]{grimshaw} that, for initial values posed at $x_0 \in I$, there exists a solution $\vu(x)$ defined for $x \in [x_0, x_1)$, and if $x_1 < \sup I$, then $|\vu(x)| \to \infty$ as $x \to x_1$. However, since $|\sin u|\le 1$, then $z=du/dx$ satisfies
$$ (1-P) z - Q \le \dfrac{dz}{dx} \le (1-P)z + Q. $$
(Recall that $Q=Q(\re^x)$ is strictly positive.) Thus, for all $x \in [x_0, x_1)$, we have
$$ z_1(x) \le z(x) \le z_2(x), $$
where $z_1, z_2$ are solutions of the linear ODEs $dz_1/dx = (1-P) z_1 -Q$ and $dz_2/dx = (1-P) z_2 +Q$ with the same initial values. Since $u(x) = \int_{x_0}^{x} z(t) \,dt$,
it follows that $|\vu(x)|$ is bounded as $x\to x_1$. Applying the same reasoning for $x \le x_0$, we see that solution $\vu(x)$ exists for all $x\in I$. Since $I$ was arbitrary, it follows 
that solutions to a given initial value problem for~\eqref{ODEclassx} exists for all real $x$.

\subsection{Asymptotic behavior of torsion}\label{sec:torlimit}

\textbf{Flat and Nearly-K\"ahler Cases.} First, we consider the asymptotic behavior of solutions to~\eqref{ODEclassx} in the special case where $b=e=f=0$. (This covers both the case of solitons constructed relative to a flat background on $\R^7$ as well as those relative to fibrations over a nearly-K\"ahler metric. In both cases, $d=6$.) The ODE specializes to 
\begin{equation} \label{flat:ODEnodenoms}
r^2 u'' + (6-cr^2) r u' - 6 \sin u =0.
\end{equation}
After the change of variable $r=\re^x$ and $z=du/dx = r u'$, we have
$$\dfrac{dz}{dx} = \left( ce^{2x} - (d -1)\right) z + d \sin u.$$
Let $L = \tfrac{1}{2} z^2 + d \cos u$. Then we compute
\begin{equation} \label{Lderivsimple}
\dfrac{dL}{dx} = \left( ce^{2x} - (d -1)\right) z^2.
\end{equation}
Thus, if $c \le 0$ then we immediately have $dL/dx \le 0$, so $|z|$ is bounded as $r \to +\infty$. Thus, $\lim_{r \to + \infty} u' = 0$ and the expression~\eqref{flat:squarenormT} shows that $\lim_{r\to \infty} |T|=0$.

For the remainder of our discussion of these cases, we assume that $c>0$, and $u(r)$ is a nonzero solution of~\eqref{flat:ODEnodenoms} defined for all $r \in \R$. 
\begin{lemma}[\cite{BWharmonic}] \label{lem:BWblowup}
Under these assumptions $\lim_{r\to+\infty} r^3 |u'| = \infty$.
\end{lemma}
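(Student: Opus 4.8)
The plan is to work in the logarithmic variable $x=\ln r$ with $z=du/dx=r u'$, as already set up above, noting that $r^{3}u'=r^{2}z=e^{2x}z$, so the assertion is equivalent to $\lim_{x\to+\infty}e^{2x}|z(x)|=+\infty$. Everything will be extracted from the Lyapunov function $L=\tfrac12 z^{2}+d\cos u$ (with $d=6$), for which $\tfrac{dL}{dx}=(ce^{2x}-(d-1))z^{2}$ by~\eqref{Lderivsimple}. Since $c>0$, there is an $x_{*}$ with $ce^{2x}-(d-1)>0$ for all $x\ge x_{*}$, so $L$ is nondecreasing on $[x_{*},\infty)$ and hence $L(x)\uparrow L_{\infty}\in(-\infty,+\infty]$. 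If $L_{\infty}=+\infty$ then $z^{2}=2L-2d\cos u\to+\infty$ (since $|\cos u|\le1$), whence $e^{2x}|z|\to\infty$ a fortiori, and we are done. So the whole content of the lemma is to exclude $L_{\infty}<\infty$ under the standing hypotheses: $u$ nonzero and defined on all of $\mathbb{R}$, hence (by smoothness at $r=0$) an odd function with $u(0)=0$ and $u'(0)\neq0$.

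Assume therefore $L_{\infty}<\infty$, so that $z$ is bounded on $[x_{*},\infty)$; I would first upgrade this to precise decay. Treating $z'=(ce^{2x}-(d-1))z+d\sin u$ as a linear equation for $z$ with integrating factor $\mu(x)=\exp\!\big((d-1)x-\tfrac{c}{2}e^{2x}\big)$ gives $(\mu z)'=d\,\mu\sin u$; since $\mu\to0$ super-exponentially while $z$ is bounded, $\mu z\to0$, so $z(x)=-\,d\,\mu(x)^{-1}\!\int_{x}^{\infty}\mu(t)\sin u(t)\,dt$, and one integration by parts against $\mu$ (using $u'=z e^{-x}=O(e^{-x})$ to bound the remainder) yields $z(x)=-\dfrac{d\sin u(x)}{ce^{2x}-(d-1)}+O(e^{-4x})=O(e^{-2x})$. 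Hence $u'=z e^{-x}=O(r^{-3})$ is integrable, $u(r)$ converges to a constant $u_{\infty}$, $L_{\infty}=d\cos u_{\infty}\in[-d,d]$, and $r^{3}u'=e^{2x}z\to-d\,c^{-1}\sin u_{\infty}$, which is finite --- so a finite $L_{\infty}$ would put us exactly in the situation where the regular global solution $u$ is asymptotic to a constant at $r=+\infty$. The sub-case $L_{\infty}=-d$ can be ruled out elementarily: then $L$ is nondecreasing, bounded below by $-d$, and tends to $-d$, so $L\equiv-d$ on $[x_{*},\infty)$; hence $z\equiv0$ there and $u$ is constant, the equation forces that constant to be an odd multiple of $\pi$, and then $u$ is that constant on all of $\mathbb{R}\setminus\{0\}$ by uniqueness away from $r=0$, contradicting $u(0)=0$. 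This leaves the case $u(r)\to u_{\infty}$ with $L_{\infty}\in(-d,d]$.

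The hard part --- and the step for which I would invoke~\cite{BWharmonic} --- is precisely the exclusion of this remaining possibility: that the regular solution of~\eqref{flat:ODEnodenoms} with $u(0)=0$, $u'(0)\neq0$, extended to all $r$, is asymptotic to a constant as $r\to+\infty$. Equivalently, such a solution never settles onto the one-dimensional ``decaying'' family of behaviors at $r=\infty$ (on which $r^{3}u'$ stays bounded, with leading balance $u'\sim-d\,c^{-1}\sin u_{\infty}\,r^{-3}$), but is always forced onto the ``growing'' mode $u'\sim\mathrm{const}\cdot r^{-(d+1)}e^{cr^{2}/2}$, for which $|z|\to\infty$. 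This is a genuinely global statement about the connecting orbit and does not follow from the monotonicity and asymptotic analysis above; \cite{BWharmonic} establishes it for a class of equations containing~\eqref{flat:ODEnodenoms} by a shooting argument in the parameter $u'(0)$, tracking the crossings of the equilibria $n\pi$ and using continuous dependence to preclude the decaying asymptotics whenever $u'(0)\neq0$. Granting this, $L_{\infty}<\infty$ is impossible, so $L_{\infty}=+\infty$ always and the lemma follows. I would present the argument in exactly this order: reduce via $L$ to ``$u$ asymptotic to a constant'', dispose of the $L_{\infty}=-d$ sub-case directly, and cite~\cite{BWharmonic} for the rest --- the last being the real obstacle.
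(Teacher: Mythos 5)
Your reduction via the Lyapunov function $L=\tfrac12 z^2+d\cos u$ is sound, but note that the paper's entire proof of this lemma is the citation: it observes that under $y=\sqrt{2c}\,r$, $f(y)=\tfrac12 u(r)$, equation~\eqref{flat:ODEnodenoms} becomes exactly equation (7) of \cite{BWharmonic} with their dimension parameter set to $7$, and then invokes the argument in the proof of their Theorem~1, whose content is precisely the blow-up statement $\lim_{y\to\infty}y^3|f'|=\infty$ for solutions regular at the origin. So both you and the paper outsource the crux to \cite{BWharmonic}; the difference lies in what is outsourced. You prove the dichotomy ($L_\infty=\infty$ gives the conclusion; $L_\infty<\infty$ forces $u\to u_\infty$ with $r^3u'\to -dc^{-1}\sin u_\infty$ finite) and then cite the \emph{statement} of the Bizo\'n--Wasserman nonexistence theorem to exclude the second branch, whereas the paper cites the blow-up claim inside their \emph{proof}. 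Your version makes transparent why blow-up of $r^3u'$ is equivalent to non-convergence of $u$, which is genuinely useful; but it essentially re-derives the easy half of the Bizo\'n--Wasserman argument (your integrating-factor asymptotics are the same l'H\^opital computation as their Remark~1, i.e.\ Proposition~\ref{prop:Hopital} here), and the hard step --- where the coefficient $6$, equivalently dimension $\ge 7$, enters --- remains entirely a citation in both treatments.

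Two points need repair. First, you must actually exhibit the change of variables showing that \eqref{flat:ODEnodenoms} falls under the hypotheses of \cite{BWharmonic}: their nonexistence holds only for dimension $\ge 7$, and our equation corresponds to exactly the borderline case $d=7$ after rescaling $y=\sqrt{2c}\,r$, $f=\tfrac12 u$ (so that $y^3f'=c\,r^3u'$ and the two blow-up statements agree); asserting that they treat ``a class of equations containing \eqref{flat:ODEnodenoms}'' without this verification is a gap, since the result is false for the analogous equations in lower dimension. Second, your description of their method --- a shooting argument in $u'(0)$ tracking crossings of the equilibria $n\pi$ via continuous dependence --- is not what they do: shooting and crossing counts are the mechanism for \emph{existence} of shrinkers in dimensions $3\le d\le 6$, while the nonexistence for $d\ge 7$ rests on monotonicity arguments built from exactly the quantities $L$ and $r^6e^{-cr^2/2}u'$ appearing in this section. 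This mischaracterization does not invalidate the proof, but it signals that the reference was guessed at rather than checked, and here the check essentially \emph{is} the proof.
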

\begin{proof}
Our ODE~\eqref{flat:ODEnodenoms} is a special case of~\cite[Equation (7)]{BWharmonic} when their parameter $d$ is set equal to 7, and one makes the change of variables $y = \sqrt{2c}\, r$ and $f(y) = \tfrac{1}{2} u(r)$. The conclusion then follows from the argument in the proof of~\cite[Theorem 1]{BWharmonic}.
\end{proof}

We now prove that the torsion is in fact unbounded as $r \to +\infty$, by showing that boundedness of the torsion would imply a contradiction to Lemma~\ref{lem:BWblowup}. First, we rewrite~\eqref{flat:ODEnodenoms} in another way as a system. By inserting integrating factors into the linear terms, we find that~\eqref{flat:ODEnodenoms} is equivalent to
$$ \dfrac{d}{dr}\left( r^6 e^{-\tfrac{1}{2} cr^2} u'\right) = 6 r^4 e^{-\tfrac{1}{2} cr^2} \sin u. $$
Now let
$$ w = \tfrac16 r^6 e^{-\tfrac{1}{2} cr^2} u', $$
so that $u' = 6 r^{-6} e^{\tfrac{1}{2} cr^2} w$ and $w' = r^4 e^{-\tfrac{1}{2} cr^2} \sin u$.

\def\rlim{\lim_{r \to +\infty}}
\def\rliminf{\underset{r \to +\infty} {\lim\inf}}

\begin{prop} \label{prop:Hopital}
Suppose that $\rliminf\, |T| < \infty$. Then for any sequence $r_n \to +\infty$, $\lim_{n\to \infty} r_n^3 u'(r_n)$ converges along a subsequence.
\end{prop}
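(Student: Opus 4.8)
The plan is to work with the reformulation of \eqref{flat:ODEnodenoms} in terms of the auxiliary function $w = \tfrac{1}{6} r^6 e^{-\tfrac{1}{2}cr^2} u'$, for which $w' = r^4 e^{-\tfrac{1}{2}cr^2}\sin u$ and $u' = 6 r^{-6} e^{\tfrac{1}{2}cr^2} w$, so that
$$ r^3 u'(r) = \frac{6\,w(r)}{r^3 e^{-\tfrac{1}{2}cr^2}} . $$
First I would observe that, since $|\sin u|\le 1$, the function $w'$ is dominated on $[1,\infty)$ by the integrable function $r^4 e^{-\tfrac{1}{2}cr^2}$; hence $w_\infty := \lim_{r\to+\infty} w(r)$ exists and is finite. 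The one place the hypothesis $\liminf_{r\to+\infty}|T| < \infty$ is used is to force $w_\infty = 0$: by \eqref{flat:squarenormT} we have $(u')^2 \le |T|^2$, so there is a sequence $r_n \to +\infty$ along which $u'(r_n)$ stays bounded, and then $w(r_n) = \tfrac{1}{6} r_n^6 e^{-\tfrac{1}{2}cr_n^2} u'(r_n) \to 0$ because $e^{-\tfrac{1}{2}cr_n^2}$ decays faster than any power of $r_n$; since the full limit $w_\infty$ exists, it must equal $0$.

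With $w_\infty = 0$ in hand, $r^3 u'(r) = 6 w(r)/g(r)$ with $g(r) = r^3 e^{-\tfrac{1}{2}cr^2}$ is a genuine $0/0$ indeterminate form as $r \to +\infty$, and $g$ is eventually positive and strictly decreasing to $0$, since $g'(r) = r^2(3 - cr^2)e^{-\tfrac{1}{2}cr^2} < 0$ for $r > \sqrt{3/c}$. I would then apply the monotone-denominator (Stolz--Ces\`aro) form of l'H\^{o}pital's rule, which gives
$$ \liminf_{r\to+\infty} \frac{6 w'}{g'} \;\le\; \liminf_{r\to+\infty} \frac{6w}{g} \;\le\; \limsup_{r\to+\infty} \frac{6w}{g} \;\le\; \limsup_{r\to+\infty} \frac{6 w'}{g'}, $$
where $\dfrac{6 w'(r)}{g'(r)} = \dfrac{6 r^4 e^{-\tfrac{1}{2}cr^2}\sin u}{r^2(3 - cr^2)e^{-\tfrac{1}{2}cr^2}} = \dfrac{6 r^2 \sin u}{3 - cr^2}$, whose absolute value is at most $\dfrac{6 r^2}{cr^2 - 3} = \dfrac{6}{c - 3/r^2} \to \dfrac{6}{c}$. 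Hence $\limsup_{r\to+\infty}|r^3 u'(r)| \le 6/c < \infty$, so $r^3 u'(r)$ is bounded on some half-line $[R,\infty)$. Consequently, for any sequence $r_n \to +\infty$ the quantities $r_n^3 u'(r_n)$ are eventually bounded, and Bolzano--Weierstrass extracts a convergent subsequence, which is the assertion.

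The main obstacle is the legitimate invocation of l'H\^{o}pital's rule: one cannot use the naive ``limit of the derivative ratio'' version, since $\dfrac{6 r^2 \sin u}{3 - cr^2}$ has \emph{no} limit as $r\to+\infty$ (the factor $\sin u$ genuinely oscillates — indeed $r^3|u'|\to\infty$ by Lemma~\ref{lem:BWblowup}). It is essential to use the $\liminf$/$\limsup$ inequality version, and therefore to have first verified that $g$ is eventually monotone with constant sign, which is where the explicit shape of $g$ enters. An alternative that bypasses l'H\^{o}pital entirely is to use $w(r) = -\int_r^{\infty} s^4 e^{-\tfrac{1}{2}cs^2}\sin u(s)\,ds$ together with the Gaussian tail estimate $\int_r^{\infty} s^4 e^{-\tfrac{1}{2}cs^2}\,ds = \tfrac{1}{c} r^3 e^{-\tfrac{1}{2}cr^2}\bigl(1 + o(1)\bigr)$ (one integration by parts), which yields $|r^3 u'(r)| = 6 e^{\tfrac{1}{2}cr^2} r^{-3}|w(r)| \le 12/c$ for large $r$; either route depends on having first established $w_\infty = 0$, so the sole nontrivial use of the hypothesis remains the step described above.
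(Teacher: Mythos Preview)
Your proof is correct and follows essentially the same route as the paper: express $r^3 u'$ as $w(r)/\ell(r)$ with $\ell(r) = \tfrac{1}{6} r^3 e^{-cr^2/2}$, use the hypothesis to force $w \to 0$, and apply l'H\^opital. Your version is in fact slightly more careful than the paper's---you first establish that $\lim_{r\to\infty} w(r)$ exists (via integrability of $w'$) before concluding it equals $0$, and you invoke the $\liminf/\limsup$ form of l'H\^opital explicitly to obtain a uniform bound, rather than informally extracting a convergent subsequence of $w'/\ell'$ as the paper does.
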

\begin{proof}
Our proof essentially follows the argument in~\cite[Remark 1]{BWharmonic}. From~\eqref{flat:squarenormT}, our hypothesis implies that $\rliminf |u'| < \infty$. Since $u' = 6 r^{-6} e^{\tfrac{1}{2} cr^2} w$, we necessarily have $\rlim w(r) = 0$. But 
$$ r^3 u'(r) = \dfrac{w(r)}{\ell(r)}, \qquad \text{where} \quad \ell(r) = \tfrac{1}{6} r^3 e^{-\tfrac{1}{2} cr^2}. $$
Since $\rlim w(r) = \rlim \ell(r) = 0$, then $\rlim r^3 u'(r)$ could be assessed using l'H\^{o}pital's rule. In fact, 
$$ \dfrac{w'(r)}{\ell'(r)} = \dfrac{r^4 e^{-\tfrac{1}{2} cr^2} \sin u}{\tfrac{1}{6} (3r^2 - cr^4)e^{-\tfrac{1}{2} cr^2}} =\dfrac{6 \sin u}{3r^{-2} - c}$$
is bounded as $r \to +\infty$. So, for any sequence $r_n\to \infty$, there exists a subsequence for which the right hand side converges.
Hence by the proof of l'H\^{o}pital's rule, $r_n^3 u'(r_n)$ converges along a subsequence.
\end{proof}

From the contradiction between Lemma~\ref{lem:BWblowup} and the conclusion of Proposition~\ref{prop:Hopital}, we deduce that $ \rliminf \, |T| = \infty$. This completes the proof of Proposition~\ref{prop:asymptotic-torsion}.

\textbf{Bryant--Salamon Cases.} Next, we consider the case where $b>0$ and $c<0$. For convenience of notation, let $A = 1 + b r^2$ and $B = d + er^2 + f r^4$, so by~\eqref{ODEclassx} we have $P = \frac{d - cr^2}{A}$ and $Q = \frac{B}{A^2}$. If we define
\begin{equation} \label{eq:L-defn}
L = \tfrac{1}{2} A^2 z^2 + B \cos u,
\end{equation}
then using~\eqref{system-eq} and $r = \re^x$, we compute that
\begin{align} \nonumber
\dfrac{dL}{dx} & = A \dfrac{dA}{dx} z^2 + A^2 z \left( \left(1 - \dfrac{d - c e^{2x}}{A} \right) z + \dfrac{B}{A^2} \sin u \right) + \dfrac{dB}{dx} \cos u - B z \sin u \\ \nonumber 
& = A^2 z^2 \left(\dfrac{1}{A} \dfrac{dA}{dx} + 1 - \dfrac{d - c e^{2x}}{A} \right) + \dfrac{1}{B} \dfrac{dB}{dx} (L - \tfrac{1}{2} A^2 z^2) \\ \label{bottomLline}
& = A^2 z^2 \left[\dfrac{1}{A} \dfrac{dA}{dx} + 1 - \dfrac{d - c e^{2x}}{A} - \dfrac{1}{2B} \dfrac{dB}{dx} \right] + \dfrac{1}{B} \dfrac{dB}{dx} L.
\end{align}

\begin{lemma} \label{myBSlemma}
Given $\varepsilon \in (0,1)$ and $\delta \in (0,1)$, there exists an $m>0$ and $M \in \R$ and such that if $x\ge M$ and $z^2 \ge m$ then
\begin{equation} \label{eq:myBSlemma}
\dfrac{dL}{dx} \le (1+\varepsilon)\left[ 4+2(1-\delta) (1+c/b) \right] L.
\end{equation}
\end{lemma}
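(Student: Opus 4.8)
The plan is to start from the identity \eqref{bottomLline} for $\dfrac{dL}{dx}$, replace each coefficient function by its limit as $x\to\infty$ up to an error that can be made as small as we like by taking $x$ large, then convert the quadratic term $A^2z^2$ into an expression involving $L$ via the definition \eqref{eq:L-defn}, and finally absorb all remaining errors — including the contribution of $B$, which is where the hypothesis $z^2\ge m$ enters — into the slack factors $1-\delta$ and $1+\varepsilon$.

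First I would record the relevant asymptotics. Writing $\dfrac{d}{dx}=r\dfrac{d}{dr}$ and using $A=1+br^2$, $B=d+er^2+fr^4$ with $f>0$, one finds
$$\frac1A\frac{dA}{dx}=\frac{2br^2}{1+br^2}\to 2,\qquad \frac{d-ce^{2x}}{A}=\frac{d-cr^2}{1+br^2}\to-\frac cb,\qquad \frac1{2B}\frac{dB}{dx}\to 2,\qquad \frac1B\frac{dB}{dx}\to 4$$
as $x\to\infty$, so the bracket in \eqref{bottomLline} tends to $1+\tfrac cb$ and the coefficient of $L$ tends to $4$; moreover $\tfrac1B\tfrac{dB}{dx}=\tfrac{2er^2+4fr^4}{d+er^2+fr^4}<4$ for every $r>0$, since this is equivalent to $0<4d+2er^2$. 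In the Bryant--Salamon cases one has $1+\tfrac cb<0$ (indeed $c/b=-7/2$ in Case [a] and $c/b=-13/3$ in Case [b]), so the limit $1+\tfrac cb$ lies strictly to the left of $(1-\delta_1)(1+\tfrac cb)$ for any $\delta_1\in(0,1)$; hence there is an $M_1$ such that for $x\ge M_1$ the bracket is $\le(1-\delta_1)(1+\tfrac cb)<0$ while the coefficient of $L$ is $\le 4$.

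Next I would control $A^2z^2$. From \eqref{eq:L-defn} we have $A^2z^2=2(L-B\cos u)$, hence $2L-2B\le A^2z^2\le 2L+2B$. If $z^2\ge m$ then $L\ge\tfrac12 A^2m-B$, and since $B/A^2\to f/b^2$, taking $m$ large and then $M_2$ large makes $L$ strictly positive and forces $B\le\eta L$ with $\eta$ as small as we wish. Then for $x\ge M:=\max(M_1,M_2)$ and $z^2\ge m$, combining the bracket bound with $A^2z^2\ge 2L-2B\ge 2(1-\eta)L>0$ — the bracket being negative, this lower bound for $A^2z^2$ produces an upper bound for the quadratic term — gives that the quadratic term in \eqref{bottomLline} is $\le 2(1-\delta_1)(1-\eta)(1+\tfrac cb)L$ and the linear term is $\le 4L$, so that
$$\frac{dL}{dx}\le\Big[\,4+2(1-\delta_1)(1-\eta)\big(1+\tfrac cb\big)\Big]L\le\Big[\,4+2\big(1-(\delta_1+\eta)\big)\big(1+\tfrac cb\big)\Big]L,$$
using $(1-\delta_1)(1-\eta)\ge 1-(\delta_1+\eta)$ and once more $1+\tfrac cb<0$. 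Given $\varepsilon,\delta$, it then remains to choose $\delta_1$ and $\eta$ (and, tracing backwards, $M_1,m,M_2$) small enough that $4+2(1-(\delta_1+\eta))(1+\tfrac cb)\le(1+\varepsilon)\big[4+2(1-\delta)(1+\tfrac cb)\big]$, which is an elementary inequality between affine functions of the error parameters and yields \eqref{eq:myBSlemma}.

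The main obstacle will be precisely this last absorption step: one must verify that the enlargement of the target constant obtained by replacing $1+\tfrac cb$ with $(1-\delta)(1+\tfrac cb)$ — an increase past the limiting value $4+2(1+\tfrac cb)$, since $1+\tfrac cb<0$ — together with the factor $1+\varepsilon$ genuinely dominates the accumulated error $\delta_1+\eta$ in every sign regime of $4+2(1-\delta)(1+\tfrac cb)$. Everything else reduces to routine estimates once the substitution $A^2z^2=2(L-B\cos u)$ and the smallness of $B/L$ (forced by $z^2\ge m$) are in hand.
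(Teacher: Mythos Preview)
Your approach is essentially the paper's, and everything up to the final absorption step is fine. But the last step is a genuine gap, not an ``elementary inequality'': for certain admissible pairs $(\varepsilon,\delta)$ no choice of $\delta_1,\eta$ will work. Write $K=4+2(1-\delta)(1+c/b)$. Your bound $4+2(1-(\delta_1+\eta))(1+c/b)$ is an \emph{increasing} function of $\delta_1+\eta$ (since $1+c/b<0$), so its infimum over positive $\delta_1,\eta$ is $4+2(1+c/b)$, attained only in the limit. When $K<0$---which is the regime that matters downstream, and which occurs for small $\delta$ in both Bryant--Salamon cases---the target $(1+\varepsilon)K$ is \emph{more negative} than $K$, and for $\varepsilon$ near $1$ it can drop below $4+2(1+c/b)$. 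Concretely, with $c/b=-7/2$ one has $4+2(1+c/b)=-1$ and $K=-1+5\delta$; taking $\delta=0.01,\ \varepsilon=0.9$ gives $(1+\varepsilon)K=-1.805$, and your best possible left side is $-1$, so the inequality fails.

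The source of the problem is that you discarded the $(1+\varepsilon)$ slack on the $L$-term by using the sharp bound $B^{-1}\tfrac{dB}{dx}<4$. The paper instead uses the given $\varepsilon$ and $\delta$ directly as the tolerances: it bounds $B^{-1}\tfrac{dB}{dx}\le 4(1+\varepsilon)$, bounds the bracket by $(1\pm\varepsilon)(1+c/b)$, and takes $m$ large enough that $A^2z^2\ge 2(1-\delta)L$; then the two terms combine to give exactly $(1+\varepsilon)\bigl[4+2(1-\delta)(1+c/b)\bigr]L$ with no further matching required. If you reorganize your estimates that way, the absorption problem disappears.
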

\begin{proof}
Note that $\displaystyle\lim_{x \to +\infty} B^{-1} \frac{dB}{dx} = 4$. Thus, given $\varepsilon \in (0,1)$ there is an $M\in \R$ such that 
$$ 4(1-\varepsilon) \le \dfrac{1}{B} \dfrac{dB}{dx} \le 4(1+\varepsilon) $$
for all $x \ge M$. Since the coefficient in square brackets in~\eqref{bottomLline} has limit $1+c/b$ as $x \to \infty$, which is negative, we choose $M$ large enough that we also have
$$ (1+c/b) (1 + \varepsilon) \le \dfrac{1}{A} \dfrac{dA}{dx} + 1 - \dfrac{d - c e^{2x}}{A} - \dfrac{1}{2B} \dfrac{dB}{dx} \le (1 + c/b) (1-\varepsilon) $$
for all $x\ge M$. Moreover, if we want 
$$ A^2 z^2 \ge 2(1-\delta) L = (1-\delta) A^2 z^2 + 2(1-\delta) B \cos u $$
then this would be implied by
$$ z^2 \ge 2 \dfrac{1-\delta}{\delta} \dfrac{B}{A^2}. $$
Recall that $f > 0$ in these cases. Since $\displaystyle\lim_{x \to \infty} (B/A^2) = f/b^2$, we can further increase $M$ to ensure that $B/A^2 \le (1+\varepsilon) f/b^2$ when $x \ge M$. Then for
$$ m = 2\dfrac{1-\delta}{\delta}(1+ \varepsilon) \dfrac{f}{b^2}, $$
the inequality $z^2 \ge m$ implies
$$ A^2 z^2 \ge 2 (1-\delta)L $$
and consequently putting all of these inequalities together and using~\eqref{bottomLline} we conclude that
$$ \dfrac{dL}{dx} \le 2(1-\delta) (1+\varepsilon)(1+c/b) L + 4(1+\varepsilon)L $$
as claimed.
\end{proof}

In the Bryant--Salamon cases we have $c/b = -13/3$ or $c/b=-7/2$, so that $2(1+c/b) < -4$. Thus, in either case we can (and will) choose $\delta$ so that the coefficient on the right hand side of~\eqref{eq:myBSlemma} is negative. Thus, for $x$ sufficiently large, whenever $z^2 \ge m$ then $L$ is decreasing. However, as the following lemma shows, $z^2$ is bounded above for $x$ sufficiently large.

\begin{lemma} Let $m$, $M$ be as in Lemma~\ref{myBSlemma}. There are constants $m' \ge m$ and $M'\ge M$ such that $z^2 \le m'$ whenever $x \ge M'$.
\end{lemma}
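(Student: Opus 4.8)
The plan is to run a trapping argument based on the function $L$ of~\eqref{eq:L-defn}, exploiting that its only growing term is $\tfrac12 A^2 z^2$ while $A = 1 + b\re^{2x} \to +\infty$. First I would enlarge $m$, if necessary, so that $m \ge 4(1+\varepsilon)f/b^2$; this is harmless, since raising the threshold only makes the hypothesis $z^2 \ge m$ of Lemma~\ref{myBSlemma} more restrictive (so its conclusion still holds with the larger threshold), while the present lemma only asks for some $m' \ge m$. Since $B/A^2 \to f/b^2$ as $x \to +\infty$, after a further enlargement of $M$ we may assume $B \le (1+\varepsilon)(f/b^2) A^2 \le \tfrac14 A^2 m$ for all $x \ge M$; consequently, whenever $x \ge M$ and $z^2 \ge m$,
\[
L = \tfrac12 A^2 z^2 + B\cos u \ \ge\ \tfrac12 A^2 z^2 - B \ \ge\ \tfrac14 A^2 z^2 \ \ge\ 0 .
\]
Because $\delta$ was chosen in Lemma~\ref{myBSlemma} so that the coefficient on the right of~\eqref{eq:myBSlemma} is negative --- which in particular forces $\delta < \tfrac12$ in both Bryant--Salamon cases (where $c/b = -7/2$ or $c/b = -13/3$) --- that lemma then gives $dL/dx \le 0$ at every point where $x \ge M$ and $z^2 \ge m$. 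Hence $L$ is non-increasing on any interval contained in $\{\,x \ge M,\ z^2 \ge m\,\}$.

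Next I would fix $x_1 \ge M$ and split into cases according to the behaviour of $z^2$ on $[M, x_1]$. If $z(x_1)^2 < m$, there is nothing to show at $x_1$. If $z(x_1)^2 \ge m$ and $z^2 \ge m$ throughout $[M, x_1]$, then $L$ is non-increasing there, so $\tfrac14 A(x_1)^2 z(x_1)^2 \le L(x_1) \le L(M)$, whence $z(x_1)^2 \le 4L(M)/A(x_1)^2$. Otherwise $z(x_1)^2 \ge m$ but $z^2 < m$ somewhere on $[M, x_1]$; then by continuity of $z$ the number $x_0 := \sup\{\,x \in [M, x_1] : z(x)^2 \le m\,\}$ is well defined, with $z(x_0)^2 \le m$ and $z^2 > m$ on $(x_0, x_1]$ (and if $x_0 = x_1$ then $z(x_1)^2 = m \le 3m$ and we are done). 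On $[x_0, x_1]$ the function $L$ is non-increasing, so, using that $A$ and $B$ are increasing functions of $r = \re^x$,
\[
\tfrac14 A(x_1)^2 z(x_1)^2 \ \le\ L(x_1) \ \le\ L(x_0) \ =\ \tfrac12 A(x_0)^2 z(x_0)^2 + B(x_0)\cos u(x_0) \ \le\ \tfrac12 A(x_1)^2 m + B(x_1),
\]
and dividing by $A(x_1)^2$ gives $z(x_1)^2 \le 2m + 4B(x_1)/A(x_1)^2 \le 2m + 4(1+\varepsilon)f/b^2 \le 3m$.

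Finally I would choose $M' \ge M$ large enough that $4L(M)/A(x)^2 \le 3m$ for all $x \ge M'$, which is possible since $A(x) \to +\infty$ (any $M' \ge M$ works if $L(M) \le 0$). Combining the cases, $z(x)^2 \le 3m$ for every $x \ge M'$, so the conclusion holds with $m' := 3m \ge m$ and this $M'$.

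The main obstacle is the ``return time'' case: ruling out that $z^2$ dips below $m$ and then rebounds to ever-larger values as $x \to +\infty$. This is precisely what the choice of $x_0$ settles --- at $x_0$ the growing part of $L$ is no larger than the fixed multiple $\tfrac12 A(x_0)^2 m$, while $B\cos u$ is absorbed by the estimate $B \le \tfrac14 A^2 m$ from the first step; since $L$ only decreases on $[x_0, x_1]$ and $A^2$ increases, $z(x_1)^2$ is forced back below a fixed multiple of $m$. The remaining care is bookkeeping: checking that the enlargements of $m$ and $M$ are legitimate (they are, since Lemma~\ref{myBSlemma}'s hypothesis only strengthens and its conclusion is unaffected), and that $\delta < \tfrac12$, which is what makes the lower bound $L \ge \tfrac14 A^2 z^2$ consistent with $L$ being non-increasing on $\{z^2 \ge m\}$.
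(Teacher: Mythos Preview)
Your argument is correct and follows the same trapping strategy as the paper: use Lemma~\ref{myBSlemma} to make $L$ non-increasing on $\{z^2 \ge m\}$, then control $z^2$ through $L$. The bookkeeping is organized a bit differently, and arguably more cleanly: you work directly with the monotonicity of $A$ and $B$ rather than introducing the paper's auxiliary constants $c_1,\ldots,c_4$ from the $e^{-4x}$ normalization, and you take $x_0$ to be the \emph{last} time $z^2 \le m$ before $x_1$ (a supremum), which handles the repeated-crossing case in one stroke, whereas the paper takes the first excursion above $m$ and then has to re-apply the argument each time $z^2$ rises above $m$ again. One small comment: your closing remark about $\delta<\tfrac12$ is a red herring --- the lower bound $L \ge \tfrac14 A^2 z^2$ on $\{z^2 \ge m\}$ follows entirely from your enlargement of $m$ to at least $4(1+\varepsilon)f/b^2$, and needs no separate constraint on $\delta$ --- but this does not affect the validity of the proof.
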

\begin{proof}
Choose $M' \ge M$ large enough so that there are positive constants $c_1 < c_2$ and $c_3 < c_4$ satisfying
\begin{equation} \label{seebounds}
c_1 \le \tfrac12 A^2 e^{-4x} \le c_2, \qquad c_3 \le B e^{-4x} \le c_4
\end{equation}
for $x \ge M'$. Let $E = \{ x \ge M':\text{ there exists $\beta > 0$ such that $z^2 \geq m$ on $[x, x+\beta)$} \}$. If $E$ is empty, then $z^2 \leq m$ on $[M', \infty)$, so our conclusion holds with $m' = m$. If $E$ is not empty, let $x_0 = \min E$, and let $\beta > 0$ be such that $z^2 \ge m$ on $[x_0, x_0 + \beta)$. Let $m_0 \geq m$ denote the value of $z^2$ at $x = x_0$. Lemma~\ref{myBSlemma} implies that $L$ is decreasing on $[x_0, x_0 + \beta)$. The definition of $L$ gives $ \tfrac{1}{2} A^2 e^{-4x} z^2 = L e^{-4x} - B e^{-4x} \cos u$ and hence
$$ c_1 z^2 \le L e^{-4x} + c_4. $$
But since $L$ is decreasing, then on this interval $L e^{-4x}$ is bounded above by its value at $x = x_0$, which is
$$ \tfrac{1}{2} A^2 e^{-4x_0} m_0 + B e^{-4x_0} \cos u \le c_2 m_0 + c_4. $$
Hence on this interval, we have $z^2 \le (c_2 m_0 + 2c_4)/c_1 = m'$. If we can take $\beta$ arbitrarily large, then we are done. If not, there exists a maximal $\beta > 0$ such that $z^2 = m$ at $x=x_0+\beta$ but $z^2 < m$ on some open interval beginning at $x_0+\beta$. 
If $z^2$ ever rises above $m$ again, then we apply the same argument, with $L$ decreasing on some interval
beginning at a point where $z^2=m$, and obtain the bound $z^2 \le (c_2 m +2c_4)/c_1 \le m'$.
\end{proof}

In the Bryant-Salamon cases we have computed the square norm of the torsion in~\eqref{eq:BS-soliton-torsion}. In terms of $z$, these are:
\begin{align*}
&\text{\rm Case [a]:}& \quad 
|T|^2 & = \dfrac{ (\lambda+r^2)^{\frac{1}{2}}}{r^2} z^2 + \dfrac{4\lambda^2+4\lambda r^2+3r^4}{r^2 (\lambda+r^2)^{3/2}}(1-\cos u) \\
&\text{\rm Case [b]:}& \quad 
|T|^2 &= \dfrac{(\lambda+r^2)^{\frac{1}{3}}}{4 r^2} z^2 + \dfrac{9\lambda^2+12\lambda r^2+8r^4}{6r^2 (\lambda+r^2)^{5/3}} (1-\cos u).
\end{align*}
Because the $r$-dependent coefficients in these formulas all have limit zero as $r \to +\infty$, using the fact that $z^2$ is bounded as $r\to \infty$, we conclude that $\underset{r \to +\infty}{\lim} |T|^2 = 0$, completing the proof of Theorem~\ref{thm:BS-solitons-final}.

\addcontentsline{toc}{section}{References}

\end{document}